\newcommand{\R}{\mathbb{R}}
\newtheorem{thm}{Theorem}[section]
\newtheorem{cor}[thm]{Corollary}
\newtheorem{lem}[thm]{Lemma}
\newtheorem{prop}[thm]{Proposition}
\newtheorem{rem}[thm]{Remark}
\begin{document}

\title{Pointwise gradient bounds for entire solutions \\ of elliptic equations with non-standard growth conditions \\ and general nonlinearities}

\author{Cecilia Cavaterra${}^{(1,5)}$\and
Serena Dipierro${}^{(2)}$\and
Alberto Farina${}^{(3)}$
\and
Zu Gao${}^{(4)}$
\and
Enrico Valdinoci${}^{(2,1)}$
}

\maketitle

{\scriptsize \begin{center} (1) -- Dipartimento di Matematica ``Federigo Enriques''\\
Universit\`a degli studi di Milano\\
Via Saldini 50, I-20133 Milano (Italy)\\
\end{center}
\scriptsize \begin{center} (2) -- Department of Mathematics and Statistics\\
University of Western Australia\\ 35 Stirling Highway, WA6009 Crawley (Australia)\\
\end{center}
\scriptsize \begin{center} (3) --
LAMFA,
CNRS UMR 7352\\
Facult\'e des Sciences\\
Universit\'e de Picardie Jules Verne\\
33 rue Saint Leu,
80039 Amiens CEDEX 1 (France)\\ \end{center}
\scriptsize \begin{center} (4) --
School of Mathematics and Statistics\\
Central South University\\
932 Lushan S Road\\
410083 Hunan, Changsha (China) \end{center}
\scriptsize \begin{center} (5) -- Istituto di Matematica Applicata e Tecnologie Informatiche ``Enrico Magenes'', CNR\\
   Via Ferrata 1, 27100 Pavia (Italy)\\
\end{center}

\bigskip

\begin{center}
E-mail addresses:
{\tt cecilia.cavaterra@unimi.it},
{\tt serena.dipierro@uwa.edu.au},
{\tt alberto.farina@u-picardie.fr},
{\tt gaozu7@163.com},
{\tt enrico.valdinoci@uwa.edu.au}
\end{center}
}
\bigskip\bigskip
\begin{abstract}
We give pointwise gradient bounds for solutions
of (possibly non-uniformly)
elliptic partial differential equations in the entire Euclidean space.

The operator taken into account is very general
and comprises also the singular and degenerate
nonlinear case with
non-standard growth conditions.
The sourcing term is also allowed to have a very general form,
depending on the space variables,
on the solution itself, on its gradient, and possibly on higher order derivatives
if additional structural conditions are satisfied.
\end{abstract}
\smallskip

\smallskip
\noindent{\bf 2000 Mathematics Subject Classification:}
35J60, 35J70, 35J75.\smallskip

\noindent{\bf Keywords:} Regularity theory,
singular and degenerate equations,
$(p,q)$-Laplacian, pointwise gradient estimates in terms of a potential function.
\medskip

\section{Introduction}
\vskip2mm
\noindent
In this paper we consider a very general elliptic equation,
set in the whole of the Euclidean space, and we will establish
pointwise gradient bounds for the solutions.
The operator taken into account can be degenerate and singular,
and we can also consider the case of the superposition of differential operators
with different homogeneity.

The main result establishes that (a possibly nonlinear function of)
the gradient of the solution is bounded at any point by a suitable potential
function. Moreover, the bound obtained, which can be
seen as a generalization of the Energy Conservation Principle
to PDEs, is in general sharp, since if equality is attained
in this bound, the solution is shown to be necessarily constant.

Our results comprise, as particular cases,
the classical results in~\cites{MR803255, MR1296785}.
The method of proof is based on Maximum Principles
and it can be seen
as a refinement of the classical Bernstein method introduced
in~\cite{MR1544873}, as extended in~\cite{MR0454338, MR583337, MR615561}.
Namely, one considers a suitable auxiliary function, called ``$P$-function''
in jargon, which is
defined in terms of the solution and its gradient, and shows
that such a $P$-function satisfies a differential inequality: from this
and the Maximum Principle, the desired bounds on the gradient plainly
follow.\medskip

In spite of its intrinsic simplicity
(and unquestionable beauty), the idea of obtaining gradient bounds via the Maximum Principle
turned out to be very effective, and it found several applications
in many topics, including Riemannian geometry (see e.g.~\cites{MR0385749, MR1230276, MR2285258, MR2812957})
anisotropic or nonhomogeneous equations (see e.g.~\cites{MR1942128, MR3158523, MR3168616, MR3231999, MR3348935, MR3587074}),
and also subelliptic equations (see~\cite{MR2545524}),
and, when the equation is set in a domain, the technique also
detects the geometry of the domain itself (see e.g.~\cite{MR2680184, MR2911121}).
Moreover, a novel approach to the Maximum
Principle method has been recently exploited
in a very successful way in~\cites{MR3125548, MR3381494, 2018arXiv180809615A},
in order to obtain oscillation
and modulus of continuity estimates.
In general, these types of gradient and continuity estimates
are also related to rigidity results for overdetermined problems (see e.g.~\cites{MR980297, MR2591980, MR3145008})
and they
also provide, as a byproduct, new classification results
of Liouville type (see also~\cites{MR1674355, MR2317549}).\medskip

In the case under consideration in this paper, given the very general structure
of the equation, one needs to exploit a technique introduced
in~\cite{MR3049726}: in our case, such differential inequality
will be satisfied, in general, only up to a reminder, which can be shown
to have the appropriate sign in a number of concrete examples.\medskip

Let us now describe in detail the mathematical framework
in which we work.
We consider the following PDE in divergence form:
\begin{equation}\label{a1.4}
\mathrm{div}(\Phi'(|\nabla u|^2)\nabla u)=f(u)+g(\nabla u, Su)\qquad
{\mbox{ in }}\;\R^n,
\end{equation}
where $\Phi\in C^{3,\alpha}_{\mathrm{loc}}\big((0,+\infty)\big)
\cap C\big([0,+\infty)\big)$ for some $\alpha\in(0,1)$, with~$\Phi(0)=0$,
$f\in C^1(\mathbb{R})$ and~$g\in C^{1}(\mathbb{R}^n\times
\mathbb{R}^{N-n})\cap L^{\infty}(\mathbb{R}^n\times\mathbb{R}^{N-n})$.

We denote by~$(\zeta, \eta)\in \mathbb{R}^n\times\mathbb{R}^{N-n}$
the variables of the function~$g$, i.e.,~$g:=g(\zeta, \eta)$, and
we assume that
for all~$M>0$ we have that
\begin{equation} \label{SOMA}
\sup_{{(\zeta, \eta)\in \mathbb{R}^n\times\mathbb{R}^{N-n}}\atop{|\zeta|\le M}} \big|
g_{\zeta_j}(\zeta,\eta)
\big|<+\infty,\quad{\mbox{ for all }}j\in\{1,\dots,n\},
\end{equation}
where the subscript~$\zeta_j$ denotes
partial derivative with respect to the variable~$\zeta_j$.

In~\eqref{a1.4} and throughout this article,
$$S: L^\infty(\mathbb{R}^n)\cap C^{\ell}(\mathbb{R}^{n})\mapsto
\big(C^{\ell'}(\mathbb{R}^{n})\big)^{N-n}$$
will denote an operator\footnote{For instance, in our setting,
$$ S(u)=\Big(x,\;u,\;x+\nabla u,\; u_{111}-u_{22}+\Delta^2 u, \;
u_{11}^5, \; x\cdot\nabla u+\log(1+u_{2222}^4)+\sqrt{-\Delta}(\arctan u)\Big)
$$
is an admissible (though not specially meaningful)
operator. In this case, $N-n=n+1+n+3$, that is~$N=3n+4$.
In our
setting, it is an interesting
feature that the nonlinear source~$g$ can also depend on higher derivatives,
on nonlinear differential operators, on integro-differential operators, etc.}
acting on bounded and smooth functions, with~$\ell\in[3,+\infty]$ and~$\ell'\in[1,+\infty]$,
and we will write
$S=(S^{[1]},\dots, S^{[N-n]})$ where $N\geqslant n\geqslant1$. If $N-n=0$, we have that $g$ does not depend on the
variable~$\eta$.

We stress that~$S$ is just a map sending functions into vectorial functions,
and it does not necessarily need to be linear or continuous in any topology.
Also, for the sake of simplicity,
we will consider smooth\footnote{In this paper, we did not
optimize the
regularity assumptions\label{REGO67}
on the solution~$u$.
For our purposes, it is sufficient
to have sufficient regularity to write~\eqref{a1.4}
in the pointwise sense and consider its derivatives.
Hence, if the operator~$S$ only involves a finite number of derivatives,
then also~$u$ is required to have a finite number of derivatives.
When~$S$ only involves operators of order~$1$ or less, in concrete cases
one can also apply standard elliptic regularity theory to obtain the desired regularity
of~$u$ starting with rather minimal assumptions. Since the minimal regularity
assumptions in this general setting are rather technical, we will not
introduce this additional complication in this article, sticking to the case
of sufficiently
smooth solutions.}
solutions~$u\in C^{\ell}(\mathbb{R}^n)$
of~\eqref{a1.4}.
\medskip

As customary, we will assume that the divergence form operator in~\eqref{a1.4}
possesses suitable (possibly singular or degenerate) elliptic structure,
which will ensure the validity of the Maximum Principle.
For this,
for any~$\sigma\in\mathbb{R}^{n}$, we set
\begin{equation}\label{a1.5}
a_{ij}(\sigma):=2\Phi''(|\sigma|^2)\sigma_i\sigma_j+\Phi'(|\sigma|^2)\delta_{ij},
\end{equation}
and
we will always assume in this paper
that at least one of the following Assumptions~A
and~B is satisfied:\medskip

\noindent{\bf Assumption A.}
There exist~$p>1$, $a\ge0$ and~$C_1$, $C_2>0$ such that, for every~$
\sigma$, $\xi\in
\R^n\setminus\{0\}$,
\begin{equation}\label{083-384-1}
C_1(a+|\sigma|)^{p-2}\le\Phi'(|\sigma|^2)\le C_2(a+|\sigma|)^{p-2}\\
\end{equation}
\begin{equation}\label{a1.7}
{\mbox{and }}\qquad\qquad
C_1(a+|\sigma|)^{p-2}|\xi|^2\le\sum_{i,j=1}^n a_{ij}(\sigma)\xi_i\xi_j
\le C_2(a+|\sigma|)^{p-2}|\xi|^2.
\end{equation}
\medskip

\noindent{\bf Assumption B.}
We have that~$\Phi\in C^1([0,+\infty))$, and
there exist~$C_1$, $C_2>0$ such that, for every~$
\sigma\in\R^n$ and every~$\xi'=(\xi,\xi_{n+1})\in
\R^n\times\R$, with~$\xi'\cdot(-\sigma,1)=0$,
\begin{equation}\label{083-384-2}
C_1(1+|\sigma|)^{-1}\le\Phi'(|\sigma|^2)\le C_2(1+|\sigma|)^{-1}
\end{equation}
\begin{equation}\label{a1.9}
{\mbox{and }}\qquad\qquad
C_1(1+|\sigma|)^{-1}|\xi'|^2\le\sum_{i,j=1}^n a_{ij}(\sigma)\xi_i\xi_j
\le C_2(1+|\sigma|)^{-1}|\xi'|^2.
\end{equation}
\smallskip

Related structural assumptions on the diffusive operators have been
considered in~\cite{MR1296785, MR3049726}.
We observe that Assumptions~A and~B
will be enforced with~$\sigma:=\nabla u$,
hence, under a Lipschitz condition on the solution~$u$,
one has that~$|\nabla u|\le M$ for some~$M>0$.
So that it will be sufficient to require Assumptions~A and~B
with~$\sigma$ belonging to the ball of radius~$M$ centered
at the origin, which we will denote by~$B_M$.
Therefore, from now on, when we say that Assumptions~A and~B
are satisfied, we mean that they are fulfilled when~$\sigma\in B_M$,
and the constants~$C_1$ and~$C_2$ can depend on~$M$.
In particular, when Assumption~B is in force, we can reduce
to Assumption~A with~$p=2$, with constants depending on~$M$.

In our setting,
we have that Assumptions~A and~B are satisfied
by very general nonlinear operators, as established by the following
result:

\begin{prop}\label{STRUCT}
Let~$m\ge1$ and
\begin{equation}\label{PHI}
\Phi(r):=\sum_{k=1}^m \left( \frac{2c_k}{p_k}(b_k+r)^{\frac{p_k}{2}}-\frac{2c_kb_k^{\frac{p_k}2}}{p_k}\right),
\end{equation}
with
\begin{equation}\label{pj}
1\leq p_1\leq\ldots\leq p_m<+\infty,
\end{equation}
and
\begin{equation}\label{cj}
c_k>0,\mathrm{~for~every~}k\in\{1,\dots,m\}.
\end{equation}
Then:
\begin{itemize}
\item[{(i)}] If
\begin{equation}\label{EITH1}
\begin{split}&
p_1>1,\qquad\qquad
b_1\ge0\\
{\mbox{and }}\quad&
\mu b_1\le b_k\le\frac{b_1}\mu,\quad{\mbox{ for all }}k\in\{1,\dots,m\},\end{split}\end{equation}
for some~$\mu\in(0,1)$, then Assumption~A holds true.
\item[{(ii)}] If
\begin{equation}\label{EITH2} \mu \le b_k\le\frac{1}\mu,\quad{\mbox{ for all }}k\in\{1,\dots,m\},
\end{equation}
for some~$\mu\in(0,1)$, then Assumption~B holds true.
\end{itemize}
\end{prop}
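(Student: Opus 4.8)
The strategy is a direct computation: from the explicit form \eqref{PHI} of $\Phi$, compute $\Phi'$ and $\Phi''$, plug into the definition \eqref{a1.5} of $a_{ij}$, and verify the one-sided bounds in Assumptions A and B. First I would record
\begin{equation*}
\Phi'(r)=\sum_{k=1}^m c_k(b_k+r)^{\frac{p_k}{2}-1},
\qquad
\Phi''(r)=\sum_{k=1}^m c_k\Bigl(\frac{p_k}{2}-1\Bigr)(b_k+r)^{\frac{p_k}{2}-2},
\end{equation*}
and evaluate at $r=|\sigma|^2$. For the quadratic form, note that for any unit vector $\xi$ one has $\sum_{i,j}a_{ij}(\sigma)\xi_i\xi_j=2\Phi''(|\sigma|^2)(\sigma\cdot\xi)^2+\Phi'(|\sigma|^2)|\xi|^2$, which lies between $\Phi'(|\sigma|^2)|\xi|^2+\min(0,2\Phi''(|\sigma|^2)|\sigma|^2)|\xi|^2$ and $\Phi'(|\sigma|^2)|\xi|^2+\max(0,2\Phi''(|\sigma|^2)|\sigma|^2)|\xi|^2$; in either sign regime the eigenvalues of $(a_{ij})$ are controlled, up to dimensional constants, by $\Phi'(|\sigma|^2)$ and $\Phi'(|\sigma|^2)+|\sigma|^2\,\Phi''(|\sigma|^2)$. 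Hence, since we only need Assumptions A and B for $\sigma\in B_M$ with constants allowed to depend on $M$, it suffices to bound $\Phi'(|\sigma|^2)$ and $\Phi'(|\sigma|^2)+|\sigma|^2\Phi''(|\sigma|^2)$ from above and below by the asserted powers of $(a+|\sigma|)$, respectively $(1+|\sigma|)^{-1}$.

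For part (i), I would reduce each summand: under \eqref{EITH1} we have $b_k\ge\mu b_1$ and $b_k\le b_1/\mu$, so $(b_k+t)$ is comparable to $(b_1+t)$ with constants depending only on $\mu$, uniformly in $t\ge0$; therefore $\Phi'(t)$ is comparable to $\sum_k c_k(b_1+t)^{\frac{p_k}2-1}$. Since $1<p_1\le\dots\le p_m$ and $t$ ranges in a bounded set $[0,M^2]$, each exponent $\tfrac{p_k}{2}-1$ is bounded and $(b_1+t)^{\frac{p_k}2-1}$ is comparable to $(b_1+t)^{\frac{p_1}2-1}$ (the ratio is $(b_1+t)^{(p_k-p_1)/2}$, bounded above by $(b_1+M^2)^{(p_m-p_1)/2}$ and below by a positive constant when $b_1>0$, and when $b_1=0$ one uses that the dominant behaviour as $t\to0$ comes from the smallest exponent). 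This yields $\Phi'(|\sigma|^2)\simeq (b_1+|\sigma|^2)^{\frac{p_1}{2}-1}\simeq (\sqrt{b_1}+|\sigma|)^{p_1-2}$, and taking $a:=\sqrt{b_1}$, $p:=p_1$ gives \eqref{083-384-1}. The same argument applied to $\Phi'+|\sigma|^2\Phi''=\sum_k c_k\bigl(b_k+|\sigma|^2+(\tfrac{p_k}{2}-1)|\sigma|^2\bigr)(b_k+|\sigma|^2)^{\frac{p_k}2-2}$—where the factor $b_k+\tfrac{p_k}{2}|\sigma|^2$ is itself comparable to $b_1+|\sigma|^2$ up to constants depending on $\mu$ and $p_m$, and is bounded below away from zero since $p_k>1$—gives the same two-sided bound, establishing \eqref{a1.7} and hence Assumption A.

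For part (ii), under \eqref{EITH2} each $b_k\in[\mu,1/\mu]$, so $(b_k+t)$ is comparable to $(1+t)$ with constants depending on $\mu$, and $\Phi\in C^1([0,+\infty))$ since every exponent satisfies $p_k\ge1$. One needs the specific decay $\Phi'(|\sigma|^2)\simeq(1+|\sigma|)^{-1}$; this is \textbf{not} automatic from \eqref{EITH2} alone for general $p_k$, and the cleanest route is to observe that Assumption B in \eqref{083-384-2}–\eqref{a1.9} is exactly Assumption A with $p=1$, $a=1$ restricted to the relevant directions, so it is governed by $\Phi'(|\sigma|^2)\simeq(1+|\sigma|^2)^{\frac{1}{2}-1}=(1+|\sigma|)^{-1}$ and the analogous bound for the quadratic form on vectors $\xi'$ orthogonal to $(-\sigma,1)$. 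I would then repeat the reduction of part (i) with $b_1$ replaced by $1$ and $p_1$ by $1$—legitimate because, with all the $b_k$ trapped in a fixed compact interval and $|\sigma|\le M$, the summand with the smallest $p_k$ again dominates and $(b_k+t)^{\frac{p_k}{2}-1}\simeq(1+t)^{-\frac12}$ requires checking that the minimal exponent is $1$; this is where the hypothesis must be read as imposing $p_1=1$ implicitly through the normalization $(1+|\sigma|)^{-1}$, i.e. Assumption B is the borderline case and the statement of (ii) is understood in that regime. Modulo this reading, the orthogonality constraint $\xi'\cdot(-\sigma,1)=0$ only helps: it controls $(\sigma\cdot\xi)^2=(\xi_{n+1})^2\le|\xi'|^2$, so the term $2\Phi''(|\sigma|^2)(\sigma\cdot\xi)^2$ is again dominated by $(1+|\sigma|)^{-1}|\xi'|^2$ up to constants, giving \eqref{a1.9}.

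The main obstacle is bookkeeping rather than conceptual: keeping track of how the constants degenerate as $b_1\to0$ (in part (i), where the operator becomes genuinely degenerate/singular at the origin and one cannot bound $(b_1+t)^{\frac{p_k}{2}-1}$ below by a positive constant uniformly) and verifying that in that case the smallest exponent $\tfrac{p_1}{2}-1$ indeed controls the full sum from both sides on the bounded range $t\in[0,M^2]$. The comparison of the mixed quantity $\Phi'+|\sigma|^2\Phi''$ needs the strict inequality $p_k>1$ in part (i) to guarantee the coefficient $b_k+\tfrac{p_k}{2}|\sigma|^2$ stays bounded below by a multiple of $b_1+|\sigma|^2$; this is the only place where $p_1>1$ (as opposed to $p_1\ge1$) is used, and it is exactly why part (ii) with its $p=2$ reduction (as noted after Assumption B in the text) is handled separately. \qed
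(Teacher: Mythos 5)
Your plan for part (i) is essentially the paper's own argument (comparability of $b_k$ with $b_1$ through $\mu$, factoring out the bounded power $(b_k+|\sigma|^2)^{\frac{p_k-p_1}{2}}$ on $B_M$, and reading the quadratic form through its eigenvalues in the directions parallel and orthogonal to $\sigma$), but it contains a slip: the eigenvalue of $a_{ij}(\sigma)$ along $\sigma$ is $\Phi'(|\sigma|^2)+2|\sigma|^2\Phi''(|\sigma|^2)=\Lambda(|\sigma|^2)$, not $\Phi'(|\sigma|^2)+|\sigma|^2\Phi''(|\sigma|^2)$. With the correct quantity the $k$-th coefficient in your sum is $b_k+(p_k-1)|\sigma|^2$, not $b_k+\tfrac{p_k}{2}|\sigma|^2$; this is exactly where $p_1>1$ is genuinely needed (when $b_1=0$ one must use $(p_k-1)|\sigma|^2\ge(p_1-1)|\sigma|^2>0$), whereas $b_k+\tfrac{p_k}{2}|\sigma|^2\ge\tfrac12(b_k+|\sigma|^2)$ holds already for $p_k\ge1$, so your explanation of the role of $p_1>1$ is attached to the wrong expression. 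Once the factor $2$ is corrected, your scheme for (i) goes through and matches Propositions~\ref{PP0}--\ref{PP1}.

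Part (ii) has a genuine gap. You assert that $\Phi'(|\sigma|^2)\simeq(1+|\sigma|)^{-1}$ is ``not automatic for general $p_k$'' and propose to prove (ii) only under an implicit extra hypothesis $p_1=1$; this does not prove the proposition as stated, and the claimed obstruction is not there. As the paper stipulates right before Proposition~\ref{STRUCT} (and as you yourself invoke at the start of your plan), Assumptions~A and~B are only required for $\sigma\in B_M$, with $C_1,C_2$ allowed to depend on $M$. Under~\eqref{EITH2} every $b_k$ lies in $[\mu,1/\mu]$, so for $|\sigma|\le M$ each term $c_k(b_k+|\sigma|^2)^{\frac{p_k-2}{2}}$ is bounded above and below by positive constants depending only on $\mu,M,c_k,p_k$ (write it as $(b_k+|\sigma|^2)^{-\frac12}(b_k+|\sigma|^2)^{\frac{p_k-1}{2}}$ and use $p_k\ge1$, as in Proposition~\ref{PP2}), while $(1+|\sigma|)^{-1}\in[(1+M)^{-1},1]$; hence~\eqref{083-384-2} holds for every $p_1\ge1$, with no borderline reading and no hidden requirement $p_1=1$. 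The same localization gives~\eqref{a1.9}: for the lower bound one must still absorb the negative contribution $2\Phi''(|\sigma|^2)(\sigma\cdot\xi)^2$ when $p_k<2$ --- your remark that the constraint ``only helps'' skips this --- which is done via $(\sigma\cdot\xi)^2\le|\sigma|^2|\xi|^2$ together with $|\xi'|^2\le(1+M^2)|\xi|^2$, and here the coefficient $b_k+(p_k-1)|\sigma|^2\ge b_k\ge\mu$ stays bounded below even at $p_k=1$ precisely because~\eqref{EITH2} keeps $b_k$ away from zero. So statement (ii) is true in the stated generality $1\le p_1\le\dots\le p_m$, and your proposal, which establishes it only in a restricted regime that you introduced, leaves it unproved.
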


In view of Proposition~\ref{STRUCT} it follows that
Assumptions~A and~B comprise the important case
of {\em nonlinear operators with
non-standard growth conditions and with non-uniform ellipticity properties},
see~\cites{MR1814973, MR3614673, MR3775180}.
\medskip

In our setting,
the bounds on the gradient of the solution~$u$ will require the control on the sign of
a suitable reminder. To describe this feature in details, we give some notation.
For any~$r\in\R$, we define
\begin{equation}\label{a2.1}
\Lambda(r):=2r\Phi''(r)+\Phi'(r).
\end{equation}
In this setting, the reminder function that we consider is defined\footnote{We take
this opportunity to amend a flaw in~\cite{MR3049726}.
As a matter of fact, due to a cut-and-paste error, the term
$$ \frac{2f(u)|\nabla u|^2}{\Lambda(|\nabla u|^2)}\sum^{n}_{j=1}g_{p_j}(x,u,\nabla u)u_j$$
is missing from formula~(1.13) in~\cite{MR3049726}. The proof
in~\cite{MR3049726} (which is based on Lemma~2.1 there)
is however {\em correct} as it is.
Formula~(1.11) and Remark~1.4 of~\cite{MR3049726} have also
to be corrected by adding the missing term (e.g., saying that~$fg_{p_i}p_i \ge0$). Also,
for clarity,
we point out some minor typos in~\cite{MR3049726}:
the statement~``$w\in{\mathcal{S}}$''
three lines below~(3.4) should be~``$w\in\overline{\mathcal{S}}$'',
the set~$V$ on line~3 of page~625
should be corrected into~$\mathscr{V}$, the ``neighborhood of~$x_o$''
in the last line of the proof of Theorem~1.3
should be the ``neighborhood of~$y$''.
Also, throughout all~\cite{MR3049726}, the function~$ g$
is assumed to be uniformly in~$C^{0,\alpha}$ with respect to the $x$ variable.}
on~$\{\nabla u\ne0\}$ by
\begin{equation}\label{a1.13}
\aligned
\mathscr{R}:=&-\frac{2f(u)g(\nabla u, Su)|\nabla u|^2}{\Phi'(|\nabla u|^2)}+2|\nabla u|^2\sum^{n}_{k=1}\sum^{N-n}_{j=1}g_{\eta_j}(\nabla u, Su)\frac{\partial S^{[j]}u}{\partial x_k}u_{k}\\
~~~~~~~~~~~~~~~~&+\frac{2f(u)|\nabla u|^2}{\Lambda(|\nabla u|^2)}\sum^{n}_{j=1}g_{\zeta_j}(\nabla u,Su)u_j.
\endaligned\end{equation}
As customary, if~$N-n=0$ the second term in the right hand side of~\eqref{a1.13}
is considered to be zero (equivalently, in this case, the function~$g$
does not depend on the variable~$\eta$).

Given~$\alpha\in(0,1]$
we will also denote by~$C^{0,\alpha}(\R^n)$ the space of functions~$u\in L^\infty(\R^n)$
such that
$$ \sup_{{x,y\in\R^n}\atop{x\ne y}}\frac{ |u(x)-u(y)|}{|x-y|^\alpha}<+\infty.$$
\medskip

In this framework, our pivotal result is the following:

\begin{thm}\label{T1.2} Assume that~$u\in C^\ell(\R^n)\cap W^{1,\infty}(\R^n)$
is a solution of~\eqref{a1.4}. For every~$r\in\R$, let
\begin{equation}\label{POTE} F_0(r):=\int_0^r f(\tau)\,d\tau,\qquad c_u:=\inf_{x\in\R^n} F_0(u(x))\qquad{\mbox{and}}\qquad
F(r):=F_0(r)-c_u.\end{equation}
Assume that
\begin{equation}\label{CONDalpha}
Su\in C^{0,\alpha}(\mathbb{R}^n,\mathbb{R}^{N-n}),\qquad{\mbox{ for some }}
\alpha\in(0,1]\end{equation}
and
\begin{equation}\label{COND}
\mathscr{R}(x)\ge0,\qquad{\mbox{for every $x\in\{ \nabla u\ne0\}$}}.
\end{equation}
Then,
\begin{equation}\label{GRAD}
2\Phi'\big(|\nabla u(x)|^2\big)|\nabla u(x)|^2-\Phi\big(|\nabla u(x)|^2\big)\le2F(u(x)),
\qquad{\mbox{for every $x\in\R^n$.}}
\end{equation}
\end{thm}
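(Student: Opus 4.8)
The plan is to carry out the classical $P$-function argument in the spirit of Bernstein's method, adapted to the non-standard structure of \eqref{a1.4}. I would introduce the auxiliary function
\[
P:=2\Phi'\big(|\nabla u|^2\big)|\nabla u|^2-\Phi\big(|\nabla u|^2\big)-2F(u),
\]
and the goal is to show $P\le 0$ everywhere on $\R^n$. By the definition of $F$ in \eqref{POTE}, one has $F\ge 0$, so $P\le 0$ holds automatically at every point where $\nabla u=0$. Hence it suffices to establish the inequality on the open set $\Omega:=\{\nabla u\ne 0\}$, where the operator is locally uniformly elliptic (since $|\nabla u|\le M$ by the $W^{1,\infty}$ bound, and on compact subsets of $\Omega$ we also stay away from $\sigma=0$, so Assumptions~A or~B give genuine uniform ellipticity with constants depending on $M$ and on the subset).

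\smallskip

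\textbf{The differential inequality for $P$.} The technical heart is to differentiate $P$ twice, use the equation \eqref{a1.4} and its first derivatives $\partial_k$ of \eqref{a1.4}, and show that on $\Omega$ the function $P$ is a subsolution, up to the reminder term, of a suitable linear elliptic operator. Concretely I expect an inequality of the schematic form
\[
\sum_{i,j=1}^n \tilde a_{ij}(\nabla u)\,\partial_{ij}P+\sum_{i=1}^n b_i\,\partial_i P\ \geqslant\ -\,\mathscr{R}\quad\text{on }\Omega,
\]
(possibly after multiplying $P$ by a positive factor or working with $|\nabla u|^2 P$ to clear denominators involving $\Lambda(|\nabla u|^2)$ and $\Phi'(|\nabla u|^2)$), where $\tilde a_{ij}$ is a positive-definite matrix comparable to $a_{ij}(\nabla u)$ and the first-order coefficients $b_i$ are bounded on compact subsets of $\Omega$. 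This is where the bulk of the computation lies: one writes $\partial_i P=2\Lambda(|\nabla u|^2)\sum_k u_k u_{ki}-2f(u)u_i$ using \eqref{a2.1}, differentiates again, and substitutes for the second-order terms $u_{kk\cdot}$ coming from differentiating the PDE. The terms that do not combine into the elliptic operator acting on $P$ or into a nonnegative quadratic form in the Hessian (a Cauchy--Schwarz/Kato-type inequality $|\nabla|\nabla u|^2|^2\le 4|\nabla u|^2|D^2u|^2$ is the standard device here) are precisely those involving $f(u)g$, $g_{\zeta_j}u_j$, and $g_{\eta_j}\,\partial_k S^{[j]}u\,u_k$ — i.e.\ exactly the three groups assembled into $\mathscr{R}$ in \eqref{a1.13}. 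Hypothesis \eqref{COND} then says the right-hand side is $\le 0$, so $P$ is a genuine subsolution of a locally uniformly elliptic operator on $\Omega$.

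\smallskip

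\textbf{From the differential inequality to the bound.} Given that $P$ is a subsolution on $\Omega$ with $P\le 0$ on $\partial\Omega$ (where $\nabla u=0$), one wants to conclude $P\le 0$ on $\Omega$. The obstacle is that $\Omega$ is unbounded, so the ordinary maximum principle does not apply directly; but $P$ is bounded on $\R^n$ (since $u\in W^{1,\infty}$ and $\Phi,\Phi',f$ are continuous, with $F$ bounded on the range of $u$), and the coefficients of the elliptic operator are controlled. I would invoke a maximum principle for bounded subsolutions of locally uniformly elliptic equations on unbounded domains — this is the standard argument underlying results of the type in \cite{MR803255, MR1296785, MR3049726}; the regularity hypothesis \eqref{CONDalpha} on $Su$ is exactly what is needed to guarantee the relevant coefficients (those carrying the $\partial_k S^{[j]}u$ factors through the reminder, and the lower-order coefficients) are, say, locally bounded and the operator has the regularity required for the maximum principle. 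One argues by contradiction: if $\sup_\Omega P=:m>0$, then since $P\le 0$ on $\partial\Omega$ and $P$ is bounded, a sliding/comparison argument (or a limiting sequence of approximate maxima à la Berestycki–Nirenberg, or a comparison with $\varepsilon|x|^2$-type barriers after localizing) forces an interior maximum behaviour contradicting the subsolution property via the strong maximum principle. This yields $P\le 0$ on $\Omega$, hence on all of $\R^n$, which is \eqref{GRAD}.

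\smallskip

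\textbf{Main difficulty.} I expect the principal obstacle to be the second-order computation producing the differential inequality with the correct reminder $\mathscr{R}$: one must carefully track the degenerate/singular weight $\Phi'(|\nabla u|^2)$ versus $\Lambda(|\nabla u|^2)$, clear denominators without destroying ellipticity, and verify that all ``bad'' terms genuinely assemble into \eqref{a1.13} and nothing else. A secondary, more routine difficulty is the justification of the maximum principle on the unbounded set $\Omega$ with coefficients that are only locally controlled near $\partial\Omega$; here one leans on the boundedness of $P$ and of $\nabla u$, and on \eqref{CONDalpha}, reducing to known unbounded-domain maximum principles as in the cited literature.
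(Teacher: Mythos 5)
Your first half --- the computation showing that
\[
P:=2\Phi'(|\nabla u|^2)|\nabla u|^2-\Phi(|\nabla u|^2)-2F(u)
\]
satisfies, on $\{\nabla u\ne0\}$, a divergence-form differential inequality whose only sign-undetermined terms assemble exactly into $\mathscr{R}$ --- is the content of the paper's Lemma~\ref{L1.1} (see \eqref{a2.4}), and your outline of it is consistent with what is actually proved. The gap is in the step ``from the differential inequality to the bound''. The maximum principle you invoke --- that a \emph{bounded} subsolution of a locally uniformly elliptic equation on the unbounded set $\Omega=\{\nabla u\ne0\}$ with $P\le0$ on $\partial\Omega$ must satisfy $P\le0$ --- is false in this generality. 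Note first that $\Omega$ may well be all of $\R^n$, so the boundary condition can be vacuous; and on $\R^n$, $n\ge3$, there exist bounded nonconstant subsolutions of uniformly elliptic equations with positive supremum approached only at infinity (e.g.\ $1-(1+|x|^2)^{(2-n)/2}$ is bounded and subharmonic). Boundedness of $P$ alone does not prevent the supremum from escaping to infinity, and no barrier or Phragm\'en--Lindel\"of argument applies without further structure. Moreover, even after a translation/compactness argument \`a la Berestycki--Nirenberg (which is in fact what the paper does: translates $w_k=u(\cdot+z_k)$ along a maximizing sequence, uniform $C^{1,\gamma}$ and then $C^{2,\gamma}$ estimates --- this is precisely where \eqref{CONDalpha} is used --- and a limit $w$ attaining $P_0=\sup P$), an interior maximum does \emph{not} ``contradict the subsolution property'': the strong maximum principle only yields that $P(w;\cdot)\equiv P_0$, first on a ball and then, by an open/closed argument, on all of $\R^n$; a constant is a perfectly admissible subsolution, so no contradiction arises at this stage.

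The actual crux, absent from your proposal, consists of two problem-specific ingredients. First, the growth estimate $\Gamma(r):=2\Phi'(r)r-\Phi(r)\le C\sqrt r$ (formula \eqref{NONP}), which together with $F\ge0$ (from the gauge in \eqref{POTE}) shows that wherever $P$ is close to the putative positive supremum $P_0$ one has $|\nabla u|\ge\kappa>0$; this is indispensable, because the differential inequality of Lemma~\ref{L1.1} holds only where $\nabla u\ne0$, the drift $B_i/|\nabla u|^2$ must be kept bounded near the touching point, and the maximizing sequence need not stay in a fixed compact subset of $\Omega$. Second, the final contradiction: once $P(w;\cdot)\equiv P_0$, one uses that $w$ is bounded to find (following its gradient lines) points $\tau_j$ with $\nabla w(\tau_j)\to0$; since $\Gamma(r)\to0$ as $r\to0^+$ and $F\ge0$, this forces $P_0=\lim_j P(w;\tau_j)\le0$, contradicting $P_0>0$. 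Without these two steps the argument does not close, so as written the proposal establishes the differential inequality but not \eqref{GRAD}.
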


We observe that, since~$u$ is bounded, we have that~$c_u$ is finite
and the setting in~\eqref{POTE}
is well posed. As a matter of fact, such a setting can be seen as a ``gauge''
on the potential function that makes~$F$ nonnegative on the range
of the solution.

Condition~\eqref{CONDalpha} can be seen as a regularity
assumption on the solution (it can be also relaxed, for instance,
if~$Su(x)=(x,Tu(x))$, with~$T: L^\infty(\mathbb{R}^n)\cap C^{\ell}(\mathbb{R}^{n})\mapsto
\big(C^{\ell'}(\mathbb{R}^{n})\big)^{N-2n}$, it is enough to
suppose that~$g$ is uniformly~$C^{0,\alpha}$ in the $x$ variable
and~$Tu\in C^{0,\alpha}(\mathbb{R}^n,\mathbb{R}^{N-2n})$).

We also point out that Theorem~\ref{T1.2} comprises,
as special cases, some classical results. In particular,
when~$\Phi(r):=r$ and~$g$ vanishes identically,
then~$\mathscr{R}$ also vanishes identically, hence condition~\eqref{COND}
is satisfied. In this case, equation~\eqref{a1.4} reduces to
$$ \Delta u=f(u),$$
and~\eqref{GRAD} boils down to
$$ |\nabla u(x)|^2\le2F(u(x)),$$
which is precisely the classical result in~\cite{MR803255}.
Similarly, some results in~\cite{MR1296785} and~\cite{MR3049726}
are also recovered
as particular cases of Theorem~\ref{T1.2} (and, from the technical
point of view, the setting introduced here simplifies
and extends that in~\cite{MR3049726}, by keeping track
at the same time of all the derivatives of the nonlinear source~$g$).
In particular, recovering the elliptic regularity theory
as mentioned in the footnote on page~\pageref{REGO67},
one can obtain from Theorem~\ref{T1.2} the classical
results in~\cite{MR803255}, \cite{MR1296785} and~\cite{MR3049726}
also for weak solutions.\medskip

In some sense, one can consider Theorem~\ref{T1.2}
as an {\em abstract} result, in which a very general framework is taken
into account, with minimal structural assumptions on the equation,
but under a fundamental condition on the sign of the reminder
function, as given in~\eqref{COND}. To apply this result to particular
cases of interest,
we point out now that condition~\eqref{COND} is indeed
satisfied in a number of {\em concrete} situations,
such as the $p$-Laplacian operator, the graphical mean curvature
operators, and operators obtained by the superposition
of singular and degenerate operators with different scaling properties,
proving gradient bounds under simple structural assumptions on the nonlinear sources.
Indeed, we have the following result:

\begin{prop}\label{CASE}
Let~$m\ge1$ and $\Phi$ be as in~\eqref{PHI},
under assumptions~\eqref{pj}
and~\eqref{cj}, and suppose that~$b_k\ge0$ for all~$k\in\{1,\dots,m\}$.
Assume that
\begin{equation}\label{Su} S(u):=u.\end{equation}
Suppose also that
\begin{eqnarray}
\label{LAMONO}&&{\mbox{$g(\zeta,\eta)\le g(\zeta,\tilde\eta)$ for all~$\zeta\in\R^n$ and~$\eta\le \tilde\eta$,}}\\
\label{LAMONO2}&&{\mbox{and $g(\lambda\zeta,\eta)=\lambda^\beta g(\zeta,\eta)$
for all~$\lambda>0$, for some~$\beta>0$}}
\end{eqnarray}
In addition, assume that one of the following five conditions is satisfied:
either
\begin{equation}\label{YAU1}
{\mbox{$m=1$, $\beta=p_1-1$
and $(p_1-2)f(\eta)g(\zeta,\eta)\ge0$,
for all~$\zeta\in\R^n$ and~$\eta\in\R$}},\end{equation}
or
\begin{equation}\label{YAU4}
{\mbox{$m=1$, $p_1=2$
and $(\beta-1)f(\eta)g(\zeta,\eta)\ge0$,
for all~$\zeta\in\R^n$ and~$\eta\in\R$}},\end{equation}
or
\begin{equation}\label{YAU5}
{\mbox{$m=1$, $\beta=1$
and $(2-p_1)f(\eta)g(\zeta,\eta)\ge0$,
for all~$\zeta\in\R^n$ and~$\eta\in\R$}},\end{equation}
or
\begin{equation}\label{YAU2}
{\mbox{$\beta\geq\max\{1, p_m-1\}$ and $f(\eta)g(\zeta,\eta)\ge0$,
for all~$\zeta\in\R^n$ and~$\eta\in\R$,}}\end{equation}
or
\begin{equation}\label{YAU3}
{\mbox{$b_1=\dots=b_m=0$,
$\beta\leq p_1-1$
and $f(\eta)g(\zeta,\eta)\le0$,
for all~$\zeta\in\R^n$ and~$\eta\in\R$,}}
\end{equation}
or
\begin{equation}\label{YAUF}
{\mbox{$m=1$, $b_1=0$ and
$\beta=p_1-1$.}}
\end{equation}
Then~$\mathscr{R}\ge0$.
\end{prop}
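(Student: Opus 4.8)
The plan is to compute $\Phi'$, $\Phi''$ and $\Lambda$ explicitly for the function $\Phi$ in~\eqref{PHI}, and then read off the sign of each of the three summands defining $\mathscr{R}$ in~\eqref{a1.13}.

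First I would differentiate~\eqref{PHI} to get, for $r>0$,
\[
\Phi'(r)=\sum_{k=1}^{m}c_k(b_k+r)^{\frac{p_k}{2}-1},\qquad
\Phi''(r)=\sum_{k=1}^{m}c_k\Big(\frac{p_k}{2}-1\Big)(b_k+r)^{\frac{p_k}{2}-2},
\]
whence, by~\eqref{a2.1},
\[
\Lambda(r)=2r\Phi''(r)+\Phi'(r)=\sum_{k=1}^{m}c_k(b_k+r)^{\frac{p_k}{2}-2}\big[(p_k-1)\,r+b_k\big].
\]
Because $p_k\ge1$, $b_k\ge0$, $c_k>0$ and $r=|\nabla u|^2>0$ on $\{\nabla u\ne0\}$, we have $\Phi'(r)>0$ and $\Lambda(r)\ge0$ there; in fact $\Lambda(r)>0$, the sole degenerate instance $p_1=\dots=p_m=1$, $b_1=\dots=b_m=0$ being excluded, since there the last summand of~\eqref{a1.13} — hence $\mathscr{R}$ itself — would not be defined. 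Next, since $S(u):=u$ we have $N-n=1$ and $S^{[1]}u=u$, so the middle summand of~\eqref{a1.13} equals $2|\nabla u|^2 g_{\eta_1}(\nabla u,u)\sum_{k=1}^{n}u_k^2=2|\nabla u|^4 g_{\eta_1}(\nabla u,u)$, which is nonnegative because~\eqref{LAMONO} gives $g_{\eta_1}\ge0$. It therefore remains to show that the sum of the first and third summands of~\eqref{a1.13} is nonnegative.

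To handle those, I would use the homogeneity~\eqref{LAMONO2}: Euler's identity yields $\sum_{j=1}^{n}\zeta_j g_{\zeta_j}(\zeta,\eta)=\beta\,g(\zeta,\eta)$, hence $\sum_{j=1}^{n}u_j\,g_{\zeta_j}(\nabla u,u)=\beta\,g(\nabla u,u)$ on $\{\nabla u\ne0\}$. Consequently the first plus third summands of~\eqref{a1.13} equal
\[
2f(u)\,g(\nabla u,u)\,|\nabla u|^2\!\left(\frac{\beta}{\Lambda(|\nabla u|^2)}-\frac{1}{\Phi'(|\nabla u|^2)}\right)
=\frac{2f(u)\,g(\nabla u,u)\,|\nabla u|^2}{\Lambda(|\nabla u|^2)\,\Phi'(|\nabla u|^2)}\,\big(\beta\Phi'(|\nabla u|^2)-\Lambda(|\nabla u|^2)\big),
\]
and a direct computation from the formulas above gives, for $r>0$,
\[
\beta\Phi'(r)-\Lambda(r)=\sum_{k=1}^{m}c_k(b_k+r)^{\frac{p_k}{2}-2}\Big[(\beta-1)\,b_k+\big(\beta-(p_k-1)\big)\,r\Big].
\]
Since the prefactor $\frac{2|\nabla u|^2}{\Lambda(|\nabla u|^2)\Phi'(|\nabla u|^2)}$ is positive, this contribution has the sign of $f(u)\,g(\nabla u,u)\,\big(\beta\Phi'(r)-\Lambda(r)\big)$, so it suffices to check in each case that $f\,g$ and $\beta\Phi'(r)-\Lambda(r)$ share a sign (or that the latter vanishes).

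Finally I would run the case analysis. Under~\eqref{YAU2} one has $\beta\ge1$ and $\beta\ge p_m-1\ge p_k-1$, so every bracket is $\ge0$, giving $\beta\Phi'(r)-\Lambda(r)\ge0$, matched by $fg\ge0$; under~\eqref{YAU3} one has $b_k\equiv0$ and $\beta\le p_1-1\le p_k-1$, so every bracket is $\le0$, giving $\beta\Phi'(r)-\Lambda(r)\le0$, matched by $fg\le0$. In the single-operator cases ($m=1$) the bracket simplifies explicitly: for~\eqref{YAU1}, $\beta=p_1-1$ cancels the $r$-term and leaves $c_1(b_1+r)^{\frac{p_1}{2}-2}(p_1-2)b_1$, matched by $(p_1-2)fg\ge0$; for~\eqref{YAU5}, $\beta=1$ cancels the $b_1$-term and leaves $c_1(b_1+r)^{\frac{p_1}{2}-2}(2-p_1)r$, matched by $(2-p_1)fg\ge0$; for~\eqref{YAU4}, $p_1=2$ collapses the bracket to $(\beta-1)(b_1+r)$, so that $\beta\Phi'(r)-\Lambda(r)=c_1(\beta-1)$, matched by $(\beta-1)fg\ge0$; and for~\eqref{YAUF}, $b_1=0$ together with $\beta=p_1-1$ kills both terms, so $\beta\Phi'(r)-\Lambda(r)\equiv0$ and the first plus third summands vanish identically. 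In every case all three summands of~\eqref{a1.13} are $\ge0$, i.e. $\mathscr{R}\ge0$. I do not expect a genuine analytic obstacle here: the only delicate point is the exact cancellation in the $m=1$ cases, which is precisely what forces the relations between $\beta$, $p_1$ and $b_1$ imposed in~\eqref{YAU1}, \eqref{YAU4}, \eqref{YAU5} and~\eqref{YAUF}; the rest is routine bookkeeping with manifestly signed terms.
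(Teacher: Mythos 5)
Your proposal is correct and follows essentially the same route as the paper: reduce the middle term to $2|\nabla u|^4 g_\eta\ge0$ via \eqref{LAMONO}, use Euler's identity from \eqref{LAMONO2} to rewrite the first plus third terms as $\tfrac{2fg|\nabla u|^2}{\Lambda\Phi'}(\beta\Phi'-\Lambda)$, compute $\beta\Phi'(r)-\Lambda(r)=\sum_k c_k(b_k+r)^{\frac{p_k-4}{2}}[(\beta-1)b_k+(\beta-p_k+1)r]$, and check the sign case by case. Your write-up even supplies the details for cases \eqref{YAU4} and \eqref{YAU5} that the paper omits, and your remark on the strict positivity of $\Lambda$ is a harmless extra precaution.
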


A concrete example that satisfies assumptions~\eqref{LAMONO}
and~\eqref{LAMONO2} is
$$ g(\zeta,\eta)=|\zeta|^\beta \, h(\eta),$$
with~$\beta>1$ and~$h$ increasing.

Other concrete situations in which one can explicitly check that~$\mathscr{R}\ge0$
will be discussed in the forthcoming Remarks~\ref{ESEMPI}
and~\ref{ESEMPI2}.\medskip

Combining Theorem~\ref{T1.2} with Propositions~\ref{STRUCT}
and~\ref{CASE}, we plainly
obtain the following gradient estimate in a very general,
but concrete, situation:

\begin{cor}\label{CASE2}
Let~$m\ge1$ and
\begin{equation*}
\Phi(r):=\sum_{k=1}^m \left( \frac{2c_k}{p_k}(b_k+r)^{\frac{p_k}{2}}-\frac{2c_kb_k^{\frac{p_k}2}}{p_k}\right),
\end{equation*}
with~$1\leq p_1\leq\ldots\leq p_m<+\infty$
and~$c_k>0$ for every~$k\in\{1,\dots,m\}$.

Suppose that either
\begin{equation}\label{EITH1X}
\begin{split}&
p_1>1,\qquad\qquad
b_1\ge0\\
{\mbox{and }}\quad&
\mu b_1\le b_k\le\frac{b_1}\mu,\quad{\mbox{ for all }}k\in\{1,\dots,m\},\end{split}\end{equation}
or
\begin{equation} \label{EITH2X}\mu \le b_k\le\frac{1}\mu,\quad{\mbox{ for all }}k\in\{1,\dots,m\},
\end{equation}
for some~$\mu\in(0,1)$.

Suppose also that~$g:\R^n\times\R\to\R$ satisfies the following
monotonicity and homogeneity assumptions:
\begin{eqnarray}
\label{LAMONOX}&&{\mbox{$g(\zeta,\eta)\le g(\zeta,\tilde\eta)$ for all~$\zeta\in\R^n$ and~$\eta\le \tilde\eta$,}}\\
\label{LAMONO2X}&&{\mbox{and $g(\lambda\zeta,\eta)=\lambda^\beta g(\zeta,\eta)$
for all~$\lambda$, $\beta>0$.}}
\end{eqnarray}
In addition, assume that one of the following five conditions is satisfied:
either
\begin{equation}\label{YAU1X}
{\mbox{$m=1$, $\beta=p_1-1$
and $(p_1-2)f(\eta)g(\zeta,\eta)\ge0$,
for all~$\zeta\in\R^n$ and~$\eta\in\R$}},\end{equation}
or
\begin{equation}\label{YAU4X}
{\mbox{$m=1$, $p_1=2$
and $(\beta-1)f(\eta)g(\zeta,\eta)\ge0$,
for all~$\zeta\in\R^n$ and~$\eta\in\R$}},\end{equation}
or
\begin{equation}\label{YAU5X}
{\mbox{$m=1$, $\beta=1$
and $(2-p_1)f(\eta)g(\zeta,\eta)\ge0$,
for all~$\zeta\in\R^n$ and~$\eta\in\R$}},\end{equation}
\begin{equation}\label{YAU2X}
{\mbox{$\beta\geq\max\{1, p_m-1\}$ and $f(\eta)g(\zeta,\eta)\ge0$,
for all~$\zeta\in\R^n$ and~$\eta\in\R$,}}\end{equation}
or
\begin{equation}\label{YAU3X}
{\mbox{$b_1=\dots=b_m=0$,
$\beta\leq p_1-1$
and $f(\eta)g(\zeta,\eta)\le0$,
for all~$\zeta\in\R^n$ and~$\eta\in\R$,}}
\end{equation}
or
\begin{equation}\label{YAUFX}
{\mbox{$m=1$, $b_1=0$ and
$\beta=p_1-1$.}}
\end{equation}
Assume that~$u\in C^\ell(\R^n)\cap W^{1,\infty}(\R^n)$
is a solution of
\begin{equation}\label{DIVS} \mathrm{div}(\Phi'(|\nabla u|^2)\nabla u)=f(u)+g(\nabla u,u),
\qquad{\mbox{in }}\;\R^n.\end{equation}
For every~$r\in\R$, let
\begin{equation*}
F_0(r):=\int_0^r f(\tau)\,d\tau,\qquad c_u:=\inf_{x\in \R^n} F_0(u(x))\qquad{\mbox{and}}\qquad
F(r):=F_0(r)-c_u.\end{equation*}
Then,
\begin{equation}\label{GRADX}
2\Phi'\big(|\nabla u(x)|^2\big)|\nabla u(x)|^2-\Phi\big(|\nabla u(x)|^2\big)\le2F(u(x)),
\qquad{\mbox{for every $x\in\R^n$.}}
\end{equation}
\end{cor}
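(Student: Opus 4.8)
The plan is to verify, one by one, that the hypotheses of Proposition~\ref{STRUCT}, of Proposition~\ref{CASE}, and of Theorem~\ref{T1.2} are all fulfilled under the assumptions of the Corollary, and then to chain the three results together; the analytic content has already been isolated in those statements, so only bookkeeping remains. First I would settle the structural conditions on the operator. Since $\Phi$ is of the form~\eqref{PHI} with~\eqref{pj} and~\eqref{cj} in force, Proposition~\ref{STRUCT} applies: if~\eqref{EITH1X} holds (which is literally~\eqref{EITH1}) we obtain that Assumption~A is satisfied, while if~\eqref{EITH2X} holds (which is literally~\eqref{EITH2}) we obtain Assumption~B. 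In either case one checks that $b_k\ge0$ for all $k\in\{1,\dots,m\}$: indeed~\eqref{EITH1X} gives $b_k\ge\mu b_1\ge0$ since $b_1\ge0$ and $\mu>0$, whereas~\eqref{EITH2X} gives $b_k\ge\mu>0$. This is precisely the sign requirement on the $b_k$'s needed in Proposition~\ref{CASE}.

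Next I would check the regularity assumption~\eqref{CONDalpha}. Here the source term is $g(\nabla u,u)$, so we are in the situation $N-n=1$ with $Su=u$. Since $u\in W^{1,\infty}(\R^n)$, the function $u$ is bounded and globally Lipschitz, hence $u\in C^{0,1}(\R^n)$, so~\eqref{CONDalpha} holds with $\alpha=1$. (Alternatively one can appeal to the relaxed version noted right after Theorem~\ref{T1.2}, writing $Su=(x,u)$ up to harmless relabelling.)

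Then I would invoke Proposition~\ref{CASE}. Its hypotheses, namely $\Phi$ of the form~\eqref{PHI} under~\eqref{pj} and~\eqref{cj}, the sign condition $b_k\ge0$ just verified, the choice $S(u)=u$ as in~\eqref{Su}, the monotonicity and homogeneity conditions~\eqref{LAMONOX}--\eqref{LAMONO2X} (which are~\eqref{LAMONO}--\eqref{LAMONO2}), and one of the alternatives~\eqref{YAU1X}--\eqref{YAUFX} (which are~\eqref{YAU1}--\eqref{YAUF}), coincide exactly with the ones assumed in the Corollary. Therefore Proposition~\ref{CASE} yields $\mathscr{R}\ge0$, that is, condition~\eqref{COND} is satisfied.

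Finally, observing that~\eqref{DIVS} is exactly~\eqref{a1.4} with this particular $g$, that $u\in C^\ell(\R^n)\cap W^{1,\infty}(\R^n)$, and that the gauge $c_u$ and the renormalized potential $F$ are defined in the Corollary exactly as in~\eqref{POTE}, I would apply Theorem~\ref{T1.2}: since both~\eqref{CONDalpha} and~\eqref{COND} have been established, it gives the bound~\eqref{GRAD}, which is precisely the claimed estimate~\eqref{GRADX}. I do not expect a genuine obstacle here; the only mild subtlety is the two points flagged above, namely confirming that the ellipticity dichotomy~\eqref{EITH1X}/\eqref{EITH2X} is compatible with the requirement $b_k\ge0$ of Proposition~\ref{CASE}, and deducing~\eqref{CONDalpha} from the Lipschitz regularity of $u$.
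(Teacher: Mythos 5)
Your proposal is correct and follows essentially the same route as the paper's proof: invoke Proposition~\ref{STRUCT} via \eqref{EITH1X}/\eqref{EITH2X} to secure Assumption~A or~B, then Proposition~\ref{CASE} to obtain $\mathscr{R}\ge0$, i.e.\ condition~\eqref{COND}, and finally Theorem~\ref{T1.2} to conclude \eqref{GRADX}. Your extra explicit checks (that $b_k\ge0$ in either alternative, and that \eqref{CONDalpha} holds with $\alpha=1$ because $Su=u\in W^{1,\infty}(\R^n)$) are details the paper leaves implicit, and they only strengthen the same argument.
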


{\begin{rem}\label{ESEMPI}\rm Checking condition~\eqref{COND} can be, in principle,
not a trivial task in practice. Nevertheless, there are a number
of concrete cases in which condition~\eqref{COND} is automatically satisfied.
Without any attempt of being exhaustive, and only for the sake
of confirming the interest of such a condition, we list here some
of these situations in which condition~\eqref{COND} is fulfilled.
For simplicity, we focus here on the case in which
$f$ vanishes identically, and thus~\eqref{a1.13}
reduces to
\begin{equation}\label{RED}
\frac{\mathscr{R}}{2|\nabla u|^2}=
\sum^{n}_{k=1}\sum^{N-n}_{j=1}g_{\eta_j}(\nabla u, Su)
\frac{\partial S^{[j]}u}{\partial x_k}u_{k}.\end{equation}

\noindent{\bf (i).} An interesting example is given by the equation
\begin{equation}\label{a1.4X91023}\mathrm{div}(\Phi'(|\nabla u|^2)\nabla u)=h(u)+c(x)\cdot\nabla u\qquad
{\mbox{ in }}\;\R^n,\end{equation}
with~$h\in C^1(\R)$, $h'\ge0$, $c=(c_1,\dots,c_n)\in C^1(\R^n,\R^n)\cap L^\infty(
\R^n,\R^n)$, under the assumption
that the matrix~$\left\{ \frac{\partial c_j}{\partial x_k}\right\}_{j,k\in\{1,\dots,n\}}$
is nonnegative definite.

To check that~\eqref{COND} is satisfied in this case, it is convenient
to take~$N:=2n+1$, $Su:=(c,h(u))$, that is
$$ S^{[j]}u:=\begin{cases}
c_j & {\mbox{ if }}j\in\{1,\dots,n\},\\
h(u) & {\mbox{ if }}j=n+1,
\end{cases}$$
and
$$ g(\zeta,\eta)=g(\zeta_1,\dots,\zeta_n,\eta_1,\dots,\eta_{n+1}):=
\sum_{j=1}^n \zeta_j\eta_j +\eta_{n+1}.$$
Notice that, with this choice, the general setting in~\eqref{a1.4}
gives precisely~\eqref{a1.4X91023}.

To check that condition~\eqref{COND} is satisfied in this case, we point out
that for every~$j\in\{1,\dots,n\}$ we have that~$g_{\eta_j}(\zeta,\eta)=\zeta_j$,
and~$g_{\eta_{n+1}}(\zeta,\eta)=1$. Accordingly,
$$ g_{\eta_j}(\nabla u, Su)=\begin{cases}
u_j & {\mbox{ if }}j\in\{1,\dots,n\},\\
1 & {\mbox{ if }}j=n+1.
\end{cases}$$
Furthermore,
$$ \frac{\partial S^{[j]}u}{\partial x_k}=\begin{cases}\displaystyle
\frac{\partial c_j}{\partial x_k} & {\mbox{ if }}j\in\{1,\dots,n\},\\
\\
h'(u)u_k & {\mbox{ if }}j=n+1.
\end{cases}$$
Consequently, by~\eqref{RED},
\begin{eqnarray*}
\frac{\mathscr{R}}{2|\nabla u|^2}&=&
\sum^{n}_{k=1}\sum^{n+1}_{j=1}g_{\eta_j}(\nabla u, Su)
\frac{\partial S^{[j]}u}{\partial x_k}u_{k}\\
&=&
\sum^{n}_{k=1}\left[\sum^{n}_{j=1}g_{\eta_j}(\nabla u, Su)
\frac{\partial S^{[j]}u}{\partial x_k}u_{k}
+g_{\eta_{n+1}}(\nabla u, Su)
\frac{\partial S^{[n+1]}u}{\partial x_k}u_{k}
\right]
\\&=&
\sum^{n}_{k=1}\left[\sum^{n}_{j=1}
\frac{\partial c_j}{\partial x_k} u_j\,u_{k}
+h'(u)\,u_k^2
\right]\\&\ge&0.
\end{eqnarray*}
This generalizes the result in~(1.10) of~\cite{MR3049726}
to more general operators.

\noindent{\bf (ii).}  As a further example, one can assume
that
\begin{equation}\label{gmonoeta}
{\mbox{$g=g(\zeta,\eta)$ is nondecreasing in~$\eta$}},\end{equation}
and consider the projection operator
$$ Su(x):=u(x_1,0,\dots,0).$$
In this case,
condition~\eqref{COND} is satisfied by all
solutions which are
nondecreasing in the first direction, since,
by~\eqref{RED},
$$ \frac{\mathscr{R}}{2|\nabla u|^2}=
g_{\eta}( \nabla u,Su)
\frac{\partial S u}{\partial x_1}u_{1}=g_{\eta}( \nabla u,Su)\,v_1u_1\ge0,$$
with~$v_1(x):=u_1(x_1,0,\dots,0)$.

\noindent{\bf (iii).} Another interesting case is when~\eqref{gmonoeta}
holds true
and one considers the integral operator
$$ Su(x):=\int_0^{x_1} u(t,x_2,\dots,x_n)\,dt,$$
and then~\eqref{COND} is satisfied by all nonnegative
solutions which are
nondecreasing in every direction (i.e., $u_i\ge0$ for all~$i\in\{1,\dots,n\}$).

Indeed, in this case we have that
$$ v_k(x):=\int_0^{x_1} u_k(t,x_2,\dots,x_n)\,dt\ge0, \quad k=2,\dots,n$$
and hence, by~\eqref{RED},
\begin{eqnarray*} \frac{\mathscr{R}}{2|\nabla u|^2}&=&
\sum^{n}_{k=1} g_{\eta}(\nabla u,Su)
\frac{\partial S u}{\partial x_k}u_{k}\\
&=& g_{\eta}(\nabla u,Su)\left( uu_1+
\sum^{n}_{k=2} v_ku_{k}\right)\\&\ge&0.
\end{eqnarray*}

\noindent{\bf (iv).} One can also assume~\eqref{gmonoeta}
and take into account the convolution operator
$$ Su(x):=\int_{\R^n} u(x-y)\,K(y)\,dy,$$
with~$K\in C^\infty_0(\R^n,[0,+\infty))$.
In this case,
condition~\eqref{COND} is satisfied by all
solutions which are
nondecreasing in every direction, since
$$ v_k(x):=\int_{\R^n} u_k(x-y)\,K(y)\,dy\ge0,$$
and~\eqref{RED} gives that
$$ \frac{\mathscr{R}}{2|\nabla u|^2}=
\sum^{n}_{k=1}g_{\eta}( \nabla u,Su)
\frac{\partial Su}{\partial x_k}u_{k}=g_{\eta}( \nabla u,Su)\sum^{n}_{k=1}v_ku_k\ge0.
$$

\noindent{\bf (v).} More generally, one can also assume~\eqref{gmonoeta}
and take into account the multi-convolution operator
$$ Su(x):=\int_{(\R^n)^d} u(x-y_1)\,\dots\,u(x-y_d)\,
K(y_1,\dots,y_d)\,dy_1\,\dots\,dy_d,$$
with~$K\in C^\infty_0((\R^n)^d,[0,+\infty))$.
Then,
condition~\eqref{COND} is satisfied by all
solutions which are nonnegative, and
nondecreasing in every direction, since
\begin{eqnarray*} v_k(x)&:=&\partial_{x_k}
\int_{(\R^n)^d} u(x-y_1)\,\dots\,u(x-y_d)\,
K(y_1,\dots,y_d)\,dy_1\,\dots\,dy_d\\
&=&\sum_{h=1}^d
\int_{(\R^n)^d} u_k(x-y_h)\,\left(
\prod_{{1\le j\le d}\atop{j\ne h}} u(x-y_j)\right)\,
K(y_1,\dots,y_d)\,dy_1\,\dots\,dy_d
\\&
\ge&0,\end{eqnarray*}
and hence~\eqref{RED} gives that
$$ \frac{\mathscr{R}}{2|\nabla u|^2}=
\sum^{n}_{k=1}g_{\eta}( \nabla u,Su)
\frac{\partial Su}{\partial x_k}u_{k}=g_{\eta}( \nabla u,Su)\sum^{n}_{k=1}v_ku_k\ge0.
$$

\noindent{\bf (vi).} Another interesting example is given by the equation
\begin{equation}\label{a1.4X91023BIS}\mathrm{div}(\Phi'(|\nabla u|^2)\nabla u)=
g(\nabla u, |u|^{q-1}u)\qquad
{\mbox{ in }}\;\R^n,\end{equation}
with~$q\ge1$ and~$g=g(\zeta_1,\dots,\zeta_n,\eta)$
such that~$g_\eta\ge0$.

In this case, one takes~$N:=n+1$ and~$Su:=|u|^{q-1}u$.
Then
$$ \frac{\partial Su}{\partial x_k}=q |u|^{q-1} u_k,$$
and hence, by~\eqref{RED},
$$ \frac{\mathscr{R}}{2|\nabla u|^2}=
\sum^{n}_{k=1} g_{\eta}(\nabla u, Su)
\frac{\partial S u}{\partial x_k}u_{k}=q\, g_{\eta}(\nabla u, Su)
|u|^{q-1}|\nabla u|^2\ge0.$$
\end{rem}}

{\begin{rem}\label{ESEMPI2}\rm
An interesting example satisfying the structural
assumption in~\eqref{COND} is provided by the equation
\begin{equation}\label{a1.487jmalsdfn}
\Delta u=f(u)+(c\cdot\nabla u)\,h(u),
\end{equation}
with~$c\in\R^n$, $f$, $h\in C^1(\R^n)$ and~$h'\ge0$.
In this case, assumption~\eqref{COND} is fulfilled
if~$u$ is monotone nondecreasing in direction~$c$, i.e.~$c\cdot\nabla u\ge0$.

Indeed, in this case we can take~$\Phi(r):=r$, $N:=n+1$,
$g(\zeta,\eta)=g(\zeta_1,\dots,\zeta_n,\eta):=(c\cdot\zeta)\,\eta$
and~$Su:=h(u)$. Then, the general equation in~\eqref{a1.4}
reduces in this setting to the one in~\eqref{a1.487jmalsdfn}.

We observe that~$g_{\zeta_j}(\zeta,\eta)=c_j\eta$ for all~$j\in\{1,\dots,n\}$
and~$g_\eta(\zeta,\eta)=c\cdot\zeta$.
Moreover, by~\eqref{a2.1}, we see that~$\Lambda(r)=1$.
Consequently, we deduce from~\eqref{a1.13} that
\begin{eqnarray*}
\mathscr{R}&=&- 2f(u)g(\nabla u, Su)|\nabla u|^2+2|\nabla u|^2\sum^{n}_{k=1}
g_{\eta}(\nabla u, Su)\frac{\partial Su}{\partial x_k}u_{k}
+2f(u)|\nabla u|^2
\sum^{n}_{j=1}g_{\zeta_j}(\nabla u,Su)u_j\\
&=&- 2f(u)\,(c\cdot\nabla u)\,h(u)\,|\nabla u|^2+2|\nabla u|^2\sum^{n}_{k=1}
(c\cdot\nabla u)h'(u)\,u^2_{k}
+2f(u)|\nabla u|^2
\sum^{n}_{j=1} c_j u_j\,h(u)\\&=&
2|\nabla u|^2\sum^{n}_{k=1}
(c\cdot\nabla u)h'(u)\,u^2_{k}\\&\ge&0
,\end{eqnarray*}
and thus condition~\eqref{COND} is satisfied in this case as well.
\end{rem}}

Following some classical lines of research in~\cites{MR803255, MR1296785, MR3049726}
one has that pointwise
gradient bounds are often related to classification results,
since attaining the potential gauge
at some point provides a very rigid information
that can completely determine the solution. This is the counterpart
of the fact that particles subject to ordinary differential equations
remain motionless if they start with zero velocity at a potential well.
In our setting,
the corresponding result in this direction goes as follows:

\begin{thm}\label{T1.3}
Let~$u\in W^{1,\infty}(\R^n)$
and let the setting in~\eqref{POTE} hold true.
Assume also that~\eqref{GRAD} is satisfied.

Let~$x_0\in\R^n$ be such that~$u(x_0)=r_0$,
with~$F(r_0)=0$ and~$F'(r_0)=0$.

If~$p>2$ in Assumption A, suppose also that
\begin{equation}\label{Z1.16}
\limsup_{r\to r_0}\frac{|F'(r)|}{|r-r_0|^{p-1}}<+\infty.
\end{equation}
Then~$u$ is constantly equal to~$r_0$.
\end{thm}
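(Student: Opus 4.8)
The plan is to exploit the gradient bound~\eqref{GRAD} at the special point $x_0$ to conclude that $\nabla u(x_0)=0$, and then to show that the set where $u$ equals $r_0$ and $\nabla u$ vanishes is both open and closed, so that it is all of $\R^n$ by connectedness. First I would observe that, since $u$ is Lipschitz, the left-hand side of~\eqref{GRAD} is a continuous function of $x$; evaluating~\eqref{GRAD} at $x_0$ and using $F(r_0)=F(u(x_0))=0$ gives
\begin{equation*}
2\Phi'\big(|\nabla u(x_0)|^2\big)|\nabla u(x_0)|^2-\Phi\big(|\nabla u(x_0)|^2\big)\le 0.
\end{equation*}
Under Assumption~A (or~B, which reduces to Assumption~A with $p=2$), the function $t\mapsto 2\Phi'(t)t-\Phi(t)$ is strictly positive for $t>0$: indeed its derivative is $2\Phi'(t)+2t\Phi''(t)-\Phi'(t)=\Lambda(t)$, which is positive by the ellipticity in~\eqref{a1.7} (it is comparable to $(a+\sqrt t)^{p-2}$), and it vanishes at $t=0$ because $\Phi(0)=0$. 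Hence the displayed inequality forces $|\nabla u(x_0)|=0$.

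Next, let $\mathscr{Z}:=\{x\in\R^n:\ u(x)=r_0\ \text{and}\ \nabla u(x)=0\}$. This set is nonempty (it contains $x_0$) and closed by continuity of $u$ and $\nabla u$. The crux is to show it is also open, which is where an ODE-type uniqueness argument enters. Fix $y\in\mathscr{Z}$. By~\eqref{GRAD} and the strict monotonicity and vanishing-at-zero of $t\mapsto 2\Phi'(t)t-\Phi(t)$ just discussed, there is a continuous increasing function $\Psi$ with $\Psi(0)=0$ such that $\Psi(|\nabla u(x)|)\le 2F(u(x))$ for all $x$; more concretely, using the lower bounds in Assumption~A one gets, on a neighborhood of $y$ where $|\nabla u|\le M$, an estimate of the form
\begin{equation*}
c\,|\nabla u(x)|^{p}\le 2F(u(x)) \quad\text{if }p\le 2,\qquad
c\,|\nabla u(x)|^{2}\le 2F(u(x)) \quad\text{if }p>2,
\end{equation*}
for a constant $c=c(M,p,C_1)>0$ (the two regimes come from comparing $(a+|\sigma|)^{p-2}|\sigma|^2$ with $|\sigma|^p$ resp. $|\sigma|^2$ for small $|\sigma|$). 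Since $F(r_0)=F'(r_0)=0$, Taylor's theorem with $F\in C^1$ gives $2F(u(x))\le \omega(|u(x)-r_0|)\,|u(x)-r_0|$ for a modulus $\omega$ with $\omega(0^+)=0$; when $p>2$ we instead use hypothesis~\eqref{Z1.16}, which upon integration yields $2F(r)\le C|r-r_0|^{p}$ near $r_0$. Combining, along any segment from $y$ one obtains a differential inequality for $\phi(x):=|u(x)-r_0|$ of the form $|\nabla\phi|\le |\nabla u|\le C\,\phi^{1/2}$ in the case $p\le 2$ (after noting $\omega$ can be absorbed), and $|\nabla u|^{2}\le C\phi^{p}$, i.e. a Hölder-type bound, in the case $p>2$.

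Then I would run the standard argument that such a differential inequality, together with $\phi(y)=0$ and $\nabla u(y)=0$, forces $\phi\equiv 0$ near $y$: parametrize $t\mapsto \phi(y+t\,e)$ for a unit vector $e$, get $|(\phi\circ\gamma)'(t)|\le C\,(\phi\circ\gamma)(t)^{\theta}$ with $\theta\in(0,1]$ chosen appropriately (here the $p>2$ normalization~\eqref{Z1.16} is precisely what makes $\theta\ge\tfrac12$ or even $\theta=1$, so that uniqueness for the scalar ODE $\psi'=C\psi^{\theta}$, $\psi(0)=0$ holds — for $\theta\ge 1$ by Gronwall, for $\tfrac12\le\theta<1$ by the usual separation-of-variables/Osgood criterion), and conclude $\phi\equiv0$ on the segment. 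Doing this in every direction shows $u\equiv r_0$ on a ball around $y$, hence $\nabla u\equiv 0$ there too, so $\mathscr{Z}$ contains a neighborhood of $y$ and is open. By connectedness of $\R^n$, $\mathscr{Z}=\R^n$, i.e. $u\equiv r_0$.

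The main obstacle is the case $p>2$: there the natural bound $|\nabla u|^2\lesssim F(u)$ only gives $|\nabla u|\lesssim |u-r_0|^{p/2}$ with $p/2>1$, and the scalar ODE $\psi'=C\psi^{p/2}$ does have uniqueness through $0$ only in the trivial direction — one must be careful that $\phi$ could in principle leave $0$. This is exactly why the extra hypothesis~\eqref{Z1.16} is imposed: it upgrades the potential estimate to $F(r)\lesssim |r-r_0|^{p}$, turning the inequality into $|\nabla u|\lesssim |u-r_0|^{p/2}$ with the *right* exponent relative to the degeneracy, after which the Osgood/Gronwall uniqueness goes through. Checking that the constants in the comparison $(a+|\sigma|)^{p-2}|\sigma|^2\simeq|\sigma|^{\min(2,p)}$ are uniform for $|\sigma|$ small, and keeping track of whether $a=0$ (genuinely degenerate) or $a>0$, is the routine but slightly delicate bookkeeping; everything else is the soft topological connectedness argument.
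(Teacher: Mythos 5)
Your overall strategy (show the level set is open and closed, and get openness from an ODE-type uniqueness argument along segments) is the same as the paper's, but two concrete steps are wrong as written. First, the two regimes of the lower bound on $\Gamma(t):=2t\Phi'(t)-\Phi(t)$ are swapped: since $\Gamma'(t)=\Lambda(t)\ge C_1(a+\sqrt{t})^{p-2}$ by \eqref{a1.7} and \eqref{a2.1}, for $p\le 2$ (or Assumption B) one gets $\Gamma(t)\ge c\,t$, i.e. $c|\nabla u|^{2}\le 2F(u)$, whereas for $p>2$ with $a\ge 0$ one only gets $\Gamma(t)\ge c\,t^{p/2}$, i.e. $c|\nabla u|^{p}\le 2F(u)$. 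Your claim that $c|\nabla u|^{2}\le 2F(u)$ when $p>2$ fails exactly in the degenerate case $a=0$ (for the model $\Phi(t)=\tfrac2p t^{p/2}$ one has $\Gamma(t)=\tfrac{2(p-1)}{p}t^{p/2}\ll t$ near $0$), which is the case \eqref{Z1.16} is designed for; and $c|\nabla u|^{p}\le 2F(u)$ for $p\le 2$ fails when $a>0$.

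Second, and more seriously, the uniqueness claim for the comparison ODE is false: for $\theta\in[\tfrac12,1)$ the problem $\psi'=C\psi^{\theta}$, $\psi(0)=0$ admits the nonzero solution $\psi(t)=\big((1-\theta)Ct\big)^{1/(1-\theta)}$, and Osgood's criterion requires $\int_0 s^{-\theta}\,ds=+\infty$, i.e. $\theta\ge 1$. So the inequality $|\varphi'|\le C\varphi^{1/2}$ you derive in the case $p\le 2$ proves nothing, and "absorbing $\omega$" cannot raise the exponent if you only use $F\in C^1$. The repair is exactly the paper's computation: since $f\in C^1$, $F\in C^2$, so $F(r_0)=F'(r_0)=0$ gives $|F(r)|\le C_0|r-r_0|^{2}$ by a second-order Taylor expansion, while for $p>2$ hypothesis \eqref{Z1.16} gives $|F(r)|\le C_0|r-r_0|^{p}$; pairing each with the matching gradient bound ($c|\nabla u|^{\hat p}\le 2F(u)$ with $\hat p=2$ resp. $\hat p=p$) yields $|\varphi'|^{\hat p}\le C|\varphi|^{\hat p}$, hence the \emph{linear} inequality $|\varphi'|\le C|\varphi|$ in both regimes, and Gronwall (monotonicity of $|\varphi(t)|^{2}e^{-2Ct}$) gives $\varphi\equiv 0$ near $0$. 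A minor further point: closedness of your set $\mathscr{Z}$ uses continuity of $\nabla u$, which is not available for $u\in W^{1,\infty}(\R^n)$; it suffices to work with $\{u=r_0\}$, as the paper does, and the preliminary observation $\nabla u(x_0)=0$ is not needed.
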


We observe that condition \eqref{Z1.16} cannot be dropped: indeed, if~$p>2$ and
$$ \beta > \max\left\{ 2,\frac{p}{p-2}\right\},$$
the function
$$ u: \R^n\mapsto\R,~u(x)=|x|^\beta$$
satisfies
\begin{eqnarray*}&&
{\rm div}\big( |\nabla u|^{p-2}\nabla u\big)=
{\rm div}\big( (\beta|x|^{\beta-1})^{p-2}
|x|^{\beta-2}\beta x\big)=
\beta^{p-1} {\rm div}\big( |x|^{\beta p-\beta-p} x\big)\\
&&\qquad=
\beta^{p-1} (\beta p-\beta-p+n) \,|x|^{\beta p-\beta-p}
=\beta^{p-1} (\beta p-\beta-p+n) \,
|u|^{\frac{\beta p-2\beta-p}{\beta}} u=F'(u),
\end{eqnarray*}
with
$$ F(r):=
\frac{ \beta^{p} (\beta p-\beta-p+n) }{(\beta-1)p}\,
|r|^{\frac{(\beta-1)p}{\beta}} .$$
Notice that in this case $F(u(0))=F(0)=0$, and~$F'(0)=0$,
but~$u$ is not constant, and~\eqref{Z1.16} is violated since
$$ \lim_{r\to0} \frac{F'(r)}{|r|^{p-1}}
= \lim_{r\to0}\frac{ \beta^{p} (\beta p-\beta-p+n) }{\beta}\,
|r|^{-\frac{p}{\beta}} =+\infty.$$

The rest of the paper is organized as follows. In
Section~\ref{090}, we show that Assumptions~A and~B are satisfied
in several cases of interest, by proving
Proposition~\ref{STRUCT}.

Section~\ref{THE} introduces the notion of $P$-function
relative to equation~\eqref{a1.4} and contains the computations
needed to check that such a function satisfies a suitable differential
inequality, possibly in terms of the remainder~$\mathscr{R}$.

Then, the proof of Theorem~\ref{T1.2} is presented in Section~\ref{ST1.2},
while Section~\ref{7yh8328} is devoted to the proofs of
Proposition~\ref{CASE} and Corollary~\ref{CASE2},
and Section~\ref{74d96543} contains the
proof of Theorem~\ref{T1.3}.

\vskip4mm
\par\noindent

\section{Structural assumptions and proof of Proposition~\ref{STRUCT}}\label{090}
\setcounter{equation}{0}
\vskip2mm
\noindent
In this section, we will establish Proposition~\ref{STRUCT}. This will be accomplished in Propositions~\ref{PP0} and~\ref{PP1} (which will give, under suitable structural conditions, the setting in Assumption~A),
and in Propositions~\ref{PP2} and~\ref{PP3} (which will give, under suitable structural conditions, the setting in Assumption~B). The precise computational details go as follows.

\begin{prop}\label{PP0}
Assume~\eqref{pj} and~\eqref{cj} hold true. Suppose also that
\begin{equation}\label{p11}
p_1>1,
\end{equation}
\begin{equation}\label{bj1}
b_1\ge0\qquad{\mbox{ and }}\qquad\mu b_1\le b_k\le\frac{b_1}\mu,\quad{\mbox{ for all }}k\in\{1,\dots,m\},
\end{equation}
for some~$\mu\in(0,1)$.
\par
Let also
\begin{equation}\label{sig1} \sigma\in B_M\setminus\{0\},\end{equation}
for some~$M\ge1$. Then we have that
\begin{equation}\label{1.6}
C_1(\sqrt{b_1}+|\sigma|)^{p_1-2}\le
\Phi'(|\sigma|^2)\le C_2(\sqrt{b_1}+|\sigma|)^{p_1-2},
\end{equation}
where
\begin{eqnarray}
\label{ca1}
&&C_1:= c_1 \left(\frac{1}{2}\right)^{\frac{|p_1-2|}{2}}
\in(0,+\infty)\\{\mbox{ and }}\qquad \label{ca2}&&
C_2:=\left(
\frac{2}{\mu}\right)^{\frac{|p_1-2|}2}
\sum_{k=1}^m  c_k
\left(\frac{b_1}{\mu}+M^2\right)^{\frac{p_k-p_1}{2}}\in(0,+\infty).
\end{eqnarray}
\end{prop}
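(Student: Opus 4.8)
The plan is a direct computation, so let me first record the shape of the quantity we must estimate. From~\eqref{PHI} one has
\[
\Phi'(r)=\sum_{k=1}^m c_k\,(b_k+r)^{\frac{p_k-2}{2}},
\qquad\text{hence}\qquad
\Phi'(|\sigma|^2)=\sum_{k=1}^m c_k\,(b_k+|\sigma|^2)^{\frac{p_k-2}{2}},
\]
and the whole statement reduces to comparing this finite sum with $(\sqrt{b_1}+|\sigma|)^{p_1-2}$. The one elementary tool I would isolate at the outset is the two-sided bound, valid for every $b\ge0$ and every $\sigma\ne0$,
\[
\tfrac{1}{\sqrt2}\,\big(\sqrt b+|\sigma|\big)\ \le\ (b+|\sigma|^2)^{1/2}\ \le\ \sqrt b+|\sigma|,
\]
which follows by squaring: the right inequality is $2\sqrt b\,|\sigma|\ge0$, the left one is $2\sqrt b\,|\sigma|\le b+|\sigma|^2$. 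Everything else is obtained by raising this to a suitable (possibly negative) power; I expect the only genuine bookkeeping nuisance to be keeping the two regimes $p_1\ge2$ and $1<p_1<2$ separate, so that the exponents $|p_1-2|/2$ in~\eqref{ca1}--\eqref{ca2} come out exactly.

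For the lower bound in~\eqref{1.6}, since all summands are nonnegative I would discard all but the $k=1$ term, obtaining $\Phi'(|\sigma|^2)\ge c_1\,(b_1+|\sigma|^2)^{(p_1-2)/2}$, and then apply the elementary bound with $b=b_1$: when $p_1\ge2$ one uses the left inequality, when $1<p_1<2$ the right inequality, and in both cases one lands on a lower bound of the form $c_1\,2^{-|p_1-2|/2}(\sqrt{b_1}+|\sigma|)^{p_1-2}$, which is exactly $C_1$ as in~\eqref{ca1}. (In the subquadratic case one in fact gets the better constant $c_1$, but the weaker bound $C_1$ suffices.)

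For the upper bound I would estimate each summand separately, writing
\[
(b_k+|\sigma|^2)^{\frac{p_k-2}{2}}=(b_k+|\sigma|^2)^{\frac{p_k-p_1}{2}}\,(b_k+|\sigma|^2)^{\frac{p_1-2}{2}}.
\]
In the first factor the exponent is nonnegative by~\eqref{pj}, so it is at most $(b_1/\mu+M^2)^{(p_k-p_1)/2}$, using $b_k\le b_1/\mu$ from~\eqref{bj1} and $|\sigma|\le M$ from~\eqref{sig1}. For the second factor I use the elementary two-sided bound once more, together with $\mu b_1\le b_k\le b_1/\mu$: from $\mu<1$ one gets $\sqrt\mu\,(\sqrt{b_1}+|\sigma|)\le\sqrt{b_k}+|\sigma|\le\mu^{-1/2}(\sqrt{b_1}+|\sigma|)$, and raising to the power $p_1-2$ — choosing the appropriate side according to the sign of $p_1-2$, and picking up one factor $2^{|p_1-2|/2}$ from the elementary bound — yields $(b_k+|\sigma|^2)^{(p_1-2)/2}\le(2/\mu)^{|p_1-2|/2}(\sqrt{b_1}+|\sigma|)^{p_1-2}$. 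Multiplying the two factor estimates, multiplying by $c_k$, and summing over $k$ produces precisely the constant $C_2$ of~\eqref{ca2}. It then only remains to observe that $C_1>0$ because $c_1>0$, and $C_2\in(0,+\infty)$ because $M\ge1$ makes $b_1/\mu+M^2>0$ and the sum is finite; since neither $C_1$ nor $C_2$ depends on $\sigma$, this proves~\eqref{1.6} and completes the argument.
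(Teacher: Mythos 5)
Your proposal is correct and follows essentially the same route as the paper: the lower bound by keeping only the $k=1$ term, and the upper bound by splitting each summand as $(b_k+|\sigma|^2)^{\frac{p_k-p_1}{2}}(b_k+|\sigma|^2)^{\frac{p_1-2}{2}}$, bounding the first factor by $\left(\frac{b_1}{\mu}+M^2\right)^{\frac{p_k-p_1}{2}}$, and comparing the second with $(\sqrt{b_1}+|\sigma|)^{p_1-2}$ via the elementary inequality $\beta_0+|\sigma|^2\le(\sqrt{\beta_0}+|\sigma|)^2\le2(\beta_0+|\sigma|^2)$ and the hypothesis $\mu b_1\le b_k\le b_1/\mu$, with the same case distinction on the sign of $p_1-2$. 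The only cosmetic difference is that you compare $\sqrt{b_k}+|\sigma|$ with $\sqrt{b_1}+|\sigma|$ directly, while the paper first compares $b_k+|\sigma|^2$ with $b_1+|\sigma|^2$ and then passes to $(\sqrt{b_1}+|\sigma|)^2$; the constants obtained coincide with \eqref{ca1}--\eqref{ca2}.
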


\begin{proof} By~\eqref{PHI}, we have
\begin{equation}\label{PRE}
\Phi'(r)=\sum_{k=1}^m  c_k(b_k+r)^{\frac{p_k-2}{2}}.
\end{equation}
As a consequence,
\begin{equation}\label{PRE2}
\Phi'(|\sigma|^2)=\sum_{k=1}^m  c_k(b_k+|\sigma|^2)^{\frac{p_k-2}{2}}.
\end{equation}
In addition,
we observe
\begin{equation*}
q_1^2+q_2^2\le(q_1+q_2)^2 \le 2(q_1^2+q_2^2)\mathrm{~for~all~}q_1, q_2\in[0,+\infty),
\end{equation*}
therefore
\begin{equation}\label{BETA}
\beta_0+|\sigma|^2\le (\sqrt{\beta_0}+|\sigma|)^2\le 2(\beta_0+|\sigma|^2),
\end{equation}
for all~$\beta_0\ge0$.
\par
Now, to establish the upper bound in~\eqref{1.6}, we use~\eqref{PRE2} and observe that
\begin{equation}\label{SJM}
\Phi'(|\sigma|^2)\le\sum_{k=1}^m  c_k (b_k+|\sigma|^2)^{\frac{p_1-2}{2}}
\left(\frac{b_1}{\mu}+M^2\right)^{\frac{p_k-p_1}{2}}.
\end{equation}
Now we claim
\begin{equation}\label{CC}
(b_k+|\sigma|^2)^{\frac{p_1-2}{2}}\le \frac{1}{\mu^{\frac{|p_1-2|}2}}(b_1+|\sigma|^2)^{\frac{p_1-2}{2}}.
\end{equation}
Indeed, if~$p_1\ge2$, we recall~\eqref{bj1} and have that
\begin{equation*}
b_k+|\sigma|^2\le \frac{b_1}{\mu}+|\sigma|^2\le\frac1{\mu}(b_1+|\sigma|^2),
\end{equation*}
which gives~\eqref{CC}. If instead~$p_1<2$, we use the inequality
$$ b_k+|\sigma|^2\ge \mu b_1+|\sigma|^2\ge \mu(b_1+|\sigma|^2),$$
and this gives~\eqref{CC} in this case as well.
\par
Then, we insert~\eqref{CC} into~\eqref{SJM} and find that
$$\Phi'(|\sigma|^2)\le
\frac{(b_1+|\sigma|^2)^{\frac{p_1-2}{2}}}{\mu^{\frac{|p_1-2|}2}}\sum_{k=1}^mc_k
\left(\frac{b_1}{\mu}+M^2\right)^{\frac{p_k-p_1}{2}}.$$
This and~\eqref{BETA} give
$$\Phi'(|\sigma|^2)\le\left(
\frac{2}{\mu}\right)^{\frac{|p_1-2|}2}
(\sqrt{b_1}+|\sigma|)^{p_1-2}
\sum_{k=1}^m  c_k
\left(\frac{b_1}{\mu}+M^2\right)^{\frac{p_k-p_1}{2}}.$$
{F}rom this and~\eqref{ca2} we conclude that the upper
bound in~\eqref{1.6} is satisfied, as desired.
\par
Now we check the lower bound in~\eqref{1.6}. For this, by~\eqref{PRE2}
and~\eqref{BETA}, we have
\begin{eqnarray*}
\Phi'(|\sigma|^2)\ge  c_1(b_1+|\sigma|^2)^{\frac{p_1-2}{2}}
\ge c_1 \left(\frac{1}{2}\right)^{\frac{|p_1-2|}{2}}
(\sqrt{b_1}+|\sigma|)^{{p_1-2}}.
\end{eqnarray*}
This and~\eqref{ca1} give the lower bound in~\eqref{1.6}.
\end{proof}

\vskip2mm
\par\noindent

\begin{prop}\label{PP1}
Assume~\eqref{pj} and~\eqref{cj} hold true. Suppose also that~\eqref{p11},
\eqref{bj1}
and~\eqref{sig1} are satisfied.
Then, for every~$\xi\in\R^n$
we have that
\begin{equation}\label{1.7}
C_1(\sqrt{b_1} + |\sigma|)^{p_1-2}|\xi|^2\le
\sum_{i,j=1}^n a_{ij}(\sigma)\xi_i\xi_j\le C_2(\sqrt{b_1} + |\sigma|)^{p_1-2}|\xi|^2,
\end{equation}
where
\begin{eqnarray}
\label{c1}
&&C_1:=c_1\,
\min\{1,p_1-1\}
\left(\frac{\mu}{2}\right)^{\frac{|p_1-2|}{2}} \in(0,+\infty)\\{\mbox{ and }}\qquad \label{c2}&&
C_2:=
\left(\frac{2}{\mu}\right)^{\frac{|p_1-2|}{2}}(p_m+1)
\sum_{k=1}^m c_k
\left(\frac{b_1}{\mu}+M^2\right)^{\frac{p_k-p_1}{2}}
\in(0,+\infty).
\end{eqnarray}
\end{prop}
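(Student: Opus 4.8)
The plan is to reduce the spectral bounds on the matrix $a_{ij}(\sigma)$ to the scalar bounds on $\Phi'(|\sigma|^2)$ already obtained in Proposition~\ref{PP0}. First I would write the quadratic form explicitly: differentiating~\eqref{PRE} gives $\Phi''(r)=\sum_{k=1}^m c_k\,\frac{p_k-2}{2}(b_k+r)^{\frac{p_k-4}{2}}$, so that, by~\eqref{a1.5},
\begin{equation*}
\sum_{i,j=1}^n a_{ij}(\sigma)\xi_i\xi_j=2\Phi''(|\sigma|^2)(\sigma\cdot\xi)^2+\Phi'(|\sigma|^2)|\xi|^2=\sum_{k=1}^m c_k\,T_k,
\end{equation*}
where $T_k:=(p_k-2)(b_k+|\sigma|^2)^{\frac{p_k-4}{2}}(\sigma\cdot\xi)^2+(b_k+|\sigma|^2)^{\frac{p_k-2}{2}}|\xi|^2$; this is well defined since $\sigma\ne0$ by~\eqref{sig1}, hence $b_k+|\sigma|^2>0$ for every $k$.

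Next I would estimate each $T_k$ by means of the elementary chain $0\le(\sigma\cdot\xi)^2\le|\sigma|^2|\xi|^2\le(b_k+|\sigma|^2)|\xi|^2$. When $p_k\ge2$ the cross term is nonnegative, and bounding $(\sigma\cdot\xi)^2$ from below by $0$ and from above by $(b_k+|\sigma|^2)|\xi|^2$ gives
\begin{equation*}
(b_k+|\sigma|^2)^{\frac{p_k-2}{2}}|\xi|^2\le T_k\le(p_k-1)(b_k+|\sigma|^2)^{\frac{p_k-2}{2}}|\xi|^2;
\end{equation*}
when $1<p_k<2$ the cross term is nonpositive, so the two bounds exchange roles. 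In either case this yields
\begin{equation*}
\min\{1,p_k-1\}\,(b_k+|\sigma|^2)^{\frac{p_k-2}{2}}|\xi|^2\le T_k\le\max\{1,p_k-1\}\,(b_k+|\sigma|^2)^{\frac{p_k-2}{2}}|\xi|^2.
\end{equation*}
Since $p_k\ge p_1>1$ by~\eqref{pj} and~\eqref{p11}, we have $\min\{1,p_k-1\}\ge\min\{1,p_1-1\}>0$ and $\max\{1,p_k-1\}\le p_m+1$. Summing over $k$, using $c_k>0$ together with the identity $\sum_{k=1}^m c_k(b_k+|\sigma|^2)^{\frac{p_k-2}{2}}=\Phi'(|\sigma|^2)$ from~\eqref{PRE2}, we obtain
\begin{equation*}
\min\{1,p_1-1\}\,\Phi'(|\sigma|^2)\,|\xi|^2\le\sum_{i,j=1}^n a_{ij}(\sigma)\xi_i\xi_j\le(p_m+1)\,\Phi'(|\sigma|^2)\,|\xi|^2.
\end{equation*}

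Finally I would insert the two-sided bound~\eqref{1.6} of Proposition~\ref{PP0} for $\Phi'(|\sigma|^2)$, and observe that $(\mu/2)^{\frac{|p_1-2|}{2}}\le(1/2)^{\frac{|p_1-2|}{2}}$ because $\mu\in(0,1)$ and $|p_1-2|\ge0$; this lets me replace the lower constant $\min\{1,p_1-1\}\cdot c_1(1/2)^{\frac{|p_1-2|}{2}}$ coming from~\eqref{ca1} by the (smaller, hence still valid) constant in~\eqref{c1}, while the upper constant is exactly $(p_m+1)$ times the one in~\eqref{ca2}, that is~\eqref{c2}. This establishes~\eqref{1.7}.

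I do not foresee a genuine obstacle: the only mildly delicate point is keeping track of the sign of $p_k-2$ in the per-index estimate of $T_k$ — i.e.\ deciding which bound on $(\sigma\cdot\xi)^2$ to use on which side — together with the routine constant bookkeeping at the end. Everything else is a direct computation combined with the already-proved Proposition~\ref{PP0}.
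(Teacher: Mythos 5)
Your proof is correct and follows essentially the same route as the paper: the same expansion of the quadratic form $\sum_{i,j}a_{ij}(\sigma)\xi_i\xi_j$ and the same sign-split treatment of the cross term $(\sigma\cdot\xi)^2$ via $0\le(\sigma\cdot\xi)^2\le|\sigma|^2|\xi|^2\le(b_k+|\sigma|^2)|\xi|^2$. The only difference is organizational: you pass through the intermediate bound $\min\{1,p_1-1\}\,\Phi'(|\sigma|^2)|\xi|^2\le\sum_{i,j}a_{ij}(\sigma)\xi_i\xi_j\le(p_m+1)\,\Phi'(|\sigma|^2)|\xi|^2$ and then invoke Proposition~\ref{PP0}, rather than redoing the conversion to $(\sqrt{b_1}+|\sigma|)^{p_1-2}$ inline as the paper does; this reproduces \eqref{c2} exactly and yields a lower constant slightly larger than \eqref{c1}, which, as you correctly observe, still implies the stated inequality.
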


\begin{proof} First of all, from~\eqref{PRE}, we obtain
\begin{equation}\label{PRE7}
\Phi''(r)=\sum_{k=1}^m  \frac{c_k(p_k-2)}{2}(b_k+r)^{\frac{p_k-4}{2}}.
\end{equation}
Accordingly, we have that
\begin{equation*}
a_{ij}(\sigma)=
\sum_{k=1}^m  \Big[ c_k(p_k-2)(b_k+|\sigma|^2)^{\frac{p_k-4}{2}}\sigma_i\sigma_j+c_k(b_k+|\sigma|^2)^{\frac{p_k-2}{2}}\delta_{ij}\Big],
\end{equation*}
and therefore, for every~$\xi=(\xi_1,\dots,\xi_n)\in\R^n$,
\begin{equation}\label{AIJ}
\sum_{i,j=1}^n a_{ij}(\sigma)\xi_i\xi_j=
\sum_{k=1}^m  \Big[ c_k(p_k-2)(b_k+|\sigma|^2)^{\frac{p_k-4}{2}}(\sigma\cdot\xi)^2+c_k(b_k+|\sigma|^2)^{\frac{p_k-2}{2}}|\xi|^2\Big].
\end{equation}
To prove the upper bound in~\eqref{1.7} we argue as follows.
We exploit~\eqref{pj} to see that
\begin{equation}\label{QUE}\begin{split}
\sum_{i,j=1}^n a_{ij}(\sigma)\xi_i\xi_j\,&\le
\sum_{k=1}^m  \Big[ c_k p_k(b_k+|\sigma|^2)^{\frac{p_k-4}{2}}|\sigma|^2|\xi|^2+
c_k(b_k+|\sigma|^2)^{\frac{p_k-2}{2}}|\xi|^2\Big]\\
&\le
\sum_{k=1}^m  \Big[ c_k p_k(b_k+|\sigma|^2)^{\frac{p_k-2}{2}}|\xi|^2+
c_k(b_k+|\sigma|^2)^{\frac{p_k-2}{2}}|\xi|^2\Big]\\
&=
\sum_{k=1}^m  c_k (p_k+1)(b_k+|\sigma|^2)^{\frac{p_k-2}{2}}|\xi|^2\\
&\le(p_m+1)\sum_{k=1}^m  c_k (b_k+|\sigma|^2)^{\frac{p_k-2}{2}}|\xi|^2
.\end{split}\end{equation}
Furthermore, in view of~\eqref{pj}, \eqref{sig1} and~\eqref{CC}, we see that
$$ (b_k+|\sigma|^2)^{\frac{p_k-2}{2}}=
(b_k+|\sigma|^2)^{\frac{p_1-2}{2}}
(b_k+|\sigma|^2)^{\frac{p_k-p_1}{2}}\le\frac{1}{\mu^{\frac{|p_1-2|}{2}}}
(b_1+|\sigma|^2)^{\frac{p_1-2}{2}}
\left(\frac{b_1}{\mu}+M^2\right)^{\frac{p_k-p_1}{2}}.$$
Consequently, by~\eqref{BETA}, we have
$$ (b_k+|\sigma|^2)^{\frac{p_k-2}{2}}\le
\left(\frac{2}{\mu}\right)^{\frac{|p_1-2|}{2}}
(\sqrt{b_1}+|\sigma|)^{{p_1-2}}
\left(\frac{b_1}{\mu}+M^2\right)^{\frac{p_k-p_1}{2}}.$$
Hence~\eqref{QUE} gives that
$$
\sum_{i,j=1}^n a_{ij}(\sigma)\xi_i\xi_j\le
\left(\frac{2}{\mu}\right)^{\frac{|p_1-2|}{2}}(p_m+1)|\xi|^2
(\sqrt{b_1}+|\sigma|)^{p_1-2}
\sum_{k=1}^m  c_k
\left(\frac{b_1}{\mu}+M^2\right)^{\frac{p_k-p_1}{2}}.
$$
This together with~\eqref{c2} establishes the upper bound in~\eqref{1.7}, and we now deal with
the lower bound in~\eqref{1.7}. To this end, we observe that if~$p_k\le2$, then
\begin{eqnarray*}
&& c_k(p_k-2)(b_k+|\sigma|^2)^{\frac{p_k-4}{2}}(\sigma\cdot\xi)^2+
c_k(b_k+|\sigma|^2)^{\frac{p_k-2}{2}}|\xi|^2\\
&=& -c_k(2-p_k)(b_k+|\sigma|^2)^{\frac{p_k-4}{2}}(\sigma\cdot\xi)^2+
c_k(b_k+|\sigma|^2)^{\frac{p_k-2}{2}}|\xi|^2\\&\ge&
-c_k(2-p_k)(b_k+|\sigma|^2)^{\frac{p_k-4}{2}} |\sigma|^2|\xi|^2+
c_k(b_k+|\sigma|^2)^{\frac{p_k-2}{2}}|\xi|^2
\\&\ge&
-c_k(2-p_k)(b_k+|\sigma|^2)^{\frac{p_k-4}{2}} (b_k+|\sigma|^2)|\xi|^2+
c_k(b_k+|\sigma|^2)^{\frac{p_k-2}{2}}|\xi|^2\\
&=& c_k(p_k-1)(b_k+|\sigma|^2)^{\frac{p_k-2}{2}}|\xi|^2\\
&\ge& c_k(p_1-1)\left(\frac{b_1}{\mu}+|\sigma|^2\right)^{\frac{p_k-2}{2}}|\xi|^2\\
&\ge& c_k\,\mu^{\frac{2-p_k}{2}}(p_1-1)\left(b_1+|\sigma|^2\right)^{\frac{p_k-2}{2}}|\xi|^2,\end{eqnarray*}
thanks to~\eqref{bj1}.

This and~\eqref{AIJ} yield that
\begin{equation*}\begin{split}
\sum_{i,j=1}^n a_{ij}(\sigma)\xi_i\xi_j\;& \ge\;
(p_1-1)\sum_{{1\le k\le m}\atop{p_k\le2}}
c_k\,\mu^{\frac{2-p_k}{2}}( b_1+|\sigma|^2)^{\frac{p_k-2}{2}}|\xi|^2\\&\qquad+
\sum_{{1\le k\le m}\atop{p_k>2}}  \Big[ c_k(p_k-2)(b_k+|\sigma|^2)^{\frac{p_k-4}{2}}(\sigma\cdot\xi)^2+
c_k(b_k+|\sigma|^2)^{\frac{p_k-2}{2}}|\xi|^2\Big]\\
& \ge\;
(p_1-1)\sum_{{1\le k\le m}\atop{p_k\le2}}
c_k\,\mu^{\frac{|p_k-2|}{2}}(b_1+|\sigma|^2)^{\frac{p_k-2}{2}}|\xi|^2+
\sum_{{1\le k\le m}\atop{p_k>2}}
c_k(b_k+|\sigma|^2)^{\frac{p_k-2}{2}}|\xi|^2\\
& \ge\;
(p_1-1)\sum_{{1\le k\le m}\atop{p_k\le2}}
c_k\,\mu^{\frac{|p_k-2|}{2}}(b_1+|\sigma|^2)^{\frac{p_k-2}{2}}|\xi|^2+
\sum_{{1\le k\le m}\atop{p_k>2}}
c_k(\mu b_1+|\sigma|^2)^{\frac{p_k-2}{2}}|\xi|^2\\
& \ge\;
(p_1-1)\sum_{{1\le k\le m}\atop{p_k\le2}}
c_k\,\mu^{\frac{|p_k-2|}{2}}(b_1+|\sigma|^2)^{\frac{p_k-2}{2}}|\xi|^2+
\sum_{{1\le k\le m}\atop{p_k>2}}
c_k\,\mu^{\frac{p_k-2}{2}}(b_1+|\sigma|^2)^{\frac{p_k-2}{2}}|\xi|^2\\
& =\;
(p_1-1)\sum_{{1\le k\le m}\atop{p_k\le2}}
c_k\,\mu^{\frac{|p_k-2|}{2}}(b_1+|\sigma|^2)^{\frac{p_k-2}{2}}|\xi|^2+
\sum_{{1\le k\le m}\atop{p_k>2}}
c_k\,\mu^{\frac{|p_k-2|}{2}}(b_1+|\sigma|^2)^{\frac{p_k-2}{2}}|\xi|^2\\
& \ge\;\min\{1,p_1-1\}\sum_{k=1}^m
c_k\,\mu^{\frac{|p_k-2|}{2}}(b_1+|\sigma|^2)^{\frac{p_k-2}{2}}|\xi|^2\\
& \ge\;\min\{1,p_1-1\}
c_1\,\mu^{\frac{|p_1-2|}{2}}(b_1+|\sigma|^2)^{\frac{p_1-2}{2}}|\xi|^2.
\end{split}\end{equation*}
{F}rom this and~\eqref{BETA} we obtain
$$ \sum_{i,j=1}^n a_{ij}(\sigma)\xi_i\xi_j\ge c_1\,
\min\{1,p_1-1\}
\left(\frac{\mu}{2}\right)^{\frac{|p_1-2|}{2}}
(\sqrt{b_1}+|\sigma|)^{{p_1-2}}|\xi|^2.$$
This gives the lower bound in~\eqref{1.7}, thanks to the setting in~\eqref{c1},
and we stress that~$C_1>0$, in light of~\eqref{p11}.
\end{proof}

\vskip2mm
\par\noindent
\begin{prop}\label{PP2}
Assume~\eqref{pj} and~\eqref{cj} hold true. Suppose also that
\begin{equation}\label{bj2}
\mu \le b_k\le\frac{1}\mu,\quad{\mbox{ for all }}k\in\{1,\dots,m\},
\end{equation}
for some~$\mu\in(0,1)$.
\par
Let also
\begin{equation}\label{sig2} \sigma\in B_M\setminus\{0\},\end{equation}
for some~$M\ge1$. Then
we have that
\begin{equation}\label{1.8} C_1(1+|\sigma|)^{-1}\le\Phi'(|\sigma|^2)\le C_2(1+|\sigma|)^{-1},
\end{equation}
where
\begin{eqnarray}
\label{ccx1}
&&C_1:=c_1 \mu^{\frac{p_1}{2}}\in(0,+\infty) \\{\mbox{ and }}\qquad \label{ccx2}&&
C_2:=\sqrt{\frac{2}{\mu}}
\sum_{k=1}^m  c_k\left(\frac1\mu+M^2\right)^{\frac{p_k-1}{2}}\in(0,+\infty).
\end{eqnarray}
\end{prop}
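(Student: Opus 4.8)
The plan is to mimic the proof of Proposition~\ref{PP0}, now exploiting that the parameters $b_k$ are bounded away from $0$ and from $+\infty$. Recall from~\eqref{PRE} that
$$\Phi'(|\sigma|^2)=\sum_{k=1}^m c_k(b_k+|\sigma|^2)^{\frac{p_k-2}{2}},$$
and note that, since $|\sigma|\ne0$, each summand is positive and admits the elementary factorization
$$(b_k+|\sigma|^2)^{\frac{p_k-2}{2}}=(b_k+|\sigma|^2)^{\frac{p_k-1}{2}}\,(b_k+|\sigma|^2)^{-\frac12}.$$
Here the first factor has nonnegative exponent, because $p_k\ge p_1\ge1$ by~\eqref{pj}, so it will be controlled, above and below, by evaluating $b_k+|\sigma|^2$ at the extreme values allowed by~\eqref{bj2} and~\eqref{sig2}; the second factor is what produces the claimed $(1+|\sigma|)^{-1}$ behaviour, via the elementary comparison $1+|\sigma|^2\le(1+|\sigma|)^2\le2(1+|\sigma|^2)$ (the case $\beta_0=1$ of~\eqref{BETA}).

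For the upper bound in~\eqref{1.8}, I would estimate, for each $k$, the first factor by $(b_k+|\sigma|^2)^{\frac{p_k-1}{2}}\le\big(\frac1\mu+M^2\big)^{\frac{p_k-1}{2}}$, using $b_k\le1/\mu$, $|\sigma|\le M$ and $p_k\ge1$; and the second factor by noticing that $(1+|\sigma|)^2\le2(1+|\sigma|^2)\le\frac2\mu(\mu+|\sigma|^2)\le\frac2\mu(b_k+|\sigma|^2)$, whence $(b_k+|\sigma|^2)^{-1/2}\le\sqrt{\frac2\mu}\,(1+|\sigma|)^{-1}$. Multiplying the two estimates and summing over $k$ gives $\Phi'(|\sigma|^2)\le\sqrt{\frac2\mu}\,(1+|\sigma|)^{-1}\sum_{k=1}^m c_k\big(\frac1\mu+M^2\big)^{\frac{p_k-1}{2}}$, which is the upper bound in~\eqref{1.8} with $C_2$ as in~\eqref{ccx2}.

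For the lower bound in~\eqref{1.8}, since every summand is positive it suffices to retain the term $k=1$, i.e. $\Phi'(|\sigma|^2)\ge c_1(b_1+|\sigma|^2)^{\frac{p_1-2}{2}}$, and to bound its two factors from below: $(b_1+|\sigma|^2)^{\frac{p_1-1}{2}}\ge b_1^{\frac{p_1-1}{2}}\ge\mu^{\frac{p_1-1}{2}}$ (from $b_1\ge\mu$ and $p_1\ge1$), while $b_1+|\sigma|^2\le\frac1\mu+|\sigma|^2\le\frac1\mu(1+|\sigma|^2)\le\frac1\mu(1+|\sigma|)^2$ yields $(b_1+|\sigma|^2)^{-1/2}\ge\sqrt\mu\,(1+|\sigma|)^{-1}$. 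Multiplying, $\Phi'(|\sigma|^2)\ge c_1\mu^{\frac{p_1-1}{2}}\sqrt\mu\,(1+|\sigma|)^{-1}=c_1\mu^{\frac{p_1}{2}}(1+|\sigma|)^{-1}$, i.e. the lower bound with $C_1$ as in~\eqref{ccx1}. The only delicate point --- and hence the (mild) main obstacle --- is to keep track of the signs of the exponents when splitting $(b_k+|\sigma|^2)^{(p_k-2)/2}$, so that the monotonicity of $t\mapsto t^s$ is applied in the correct direction; the hypothesis $p_1\ge1$ from~\eqref{pj} is exactly what makes the argument uniform in $k$.
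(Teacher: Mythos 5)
Your proof is correct and follows essentially the same route as the paper: the same factorization $(b_k+|\sigma|^2)^{\frac{p_k-2}{2}}=(b_k+|\sigma|^2)^{\frac{p_k-1}{2}}(b_k+|\sigma|^2)^{-\frac12}$, the same use of $\mu\le b_k\le\frac1\mu$, $|\sigma|\le M$ and of the comparison $1+|\sigma|^2\le(1+|\sigma|)^2\le2(1+|\sigma|^2)$ from~\eqref{BETA}, and keeping only the $k=1$ term for the lower bound. The resulting constants match~\eqref{ccx1} and~\eqref{ccx2} exactly, so there is nothing to add.
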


\begin{proof} We use \eqref{pj}, \eqref{bj2} and~\eqref{sig2} to obtain that
\begin{eqnarray*} &&(b_k+|\sigma|^2)^{\frac{p_k-2}{2}}=
(b_k+|\sigma|^2)^{-\frac{1}{2}}
(b_k+|\sigma|^2)^{\frac{p_k-1}{2}}\le
(\mu+|\sigma|^2)^{-\frac{1}{2}}
\left(\frac1\mu+M^2\right)^{\frac{p_k-1}{2}}\\&&\qquad\le\frac{1}{\sqrt\mu}
(1+|\sigma|^2)^{-\frac{1}{2}}
\left(\frac1\mu+M^2\right)^{\frac{p_k-1}{2}}.\end{eqnarray*}
This and~\eqref{BETA} yield
$$ (b_k+|\sigma|^2)^{\frac{p_k-2}{2}}\le
\sqrt{\frac{2}{\mu}}
(1+|\sigma|)^{-1}
\left(\frac1\mu+M^2\right)^{\frac{p_k-1}{2}}.$$
Plugging this information into~\eqref{PRE2}, we see that
$$ \Phi'(|\sigma|^2)\le\sqrt{\frac{2}{\mu}}
(1+|\sigma|)^{-1}
\sum_{k=1}^m  c_k\left(\frac1\mu+M^2\right)^{\frac{p_k-1}{2}}.$$
This and~\eqref{ccx2} give the upper bound in~\eqref{1.8}.
\par
Furthermore, by~\eqref{PRE2} and~\eqref{bj2}, we have
\begin{eqnarray*}
\Phi'(|\sigma|^2)&=&\sum_{k=1}^m  c_k(b_k+|\sigma|^2)^{\frac{p_k-2}{2}}\\
&\ge& c_1(b_1+|\sigma|^2)^{\frac{p_1-2}{2}}\\
&=& c_1(b_1+|\sigma|^2)^{-\frac{1}{2}}(b_1+|\sigma|^2)^{\frac{p_1-1}{2}}\\
&\ge& c_1\left(\frac1\mu+|\sigma|^2\right)^{-\frac{1}{2}}\mu^{\frac{p_1-1}{2}}\\
&\ge& c_1 \sqrt\mu\,\left(1+|\sigma|^2\right)^{-\frac{1}{2}}\mu^{\frac{p_1-1}{2}}.
\end{eqnarray*}
This and~\eqref{BETA} lead to
$$ \Phi'(|\sigma|^2)\ge c_1 \left(1+|\sigma|\right)^{-1}
\mu^{\frac{p_1}{2}}.$$
Hence, recalling~\eqref{ccx1}, we obtain the lower bound in~\eqref{1.8},
as desired.
\end{proof}

\vskip2mm
\par\noindent
\begin{prop}\label{PP3}
Assume~\eqref{pj} and~\eqref{cj} hold true. Suppose also that~\eqref{bj2} and~\eqref{sig2} are satisfied. Then, for every~$\xi'=(\xi_1,\dots,\xi_{n+1})=(\xi,\xi_{n+1})\in\R^n\times\R$ with~$\xi\cdot\sigma=\xi_{n+1}$,
we have that
\begin{equation}\label{1.9}
C_1(1 + |\sigma|)^{-1}|\xi'|^2\le
\sum_{i,j=1}^n a_{ij}(\sigma)\xi_i\xi_j\le C_2(1 + |\sigma|)^{-1}|\xi'|^2,
\end{equation}
where
\begin{eqnarray}
\label{cc1}
&&C_1:=
\frac{c_1\min\{1,p_1-1\}}{ 1+M^2}\mu^{\frac{p_1}{2}}
\in(0,+\infty) \\{\mbox{ and }}\qquad \label{cc2}&&
C_2:=
\sqrt{\frac{2}{\mu}}
\sum_{k=1}^m c_k(|p_k-2|+1)
\left(\frac1\mu+M^2\right)^{\frac{p_k-1}{2}}\in(0,+\infty).
\end{eqnarray}
\end{prop}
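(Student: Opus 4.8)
The plan is to obtain both inequalities in~\eqref{1.9} from the explicit expansion of the quadratic form worked out in the proof of Proposition~\ref{PP1}, namely~\eqref{AIJ}, together with the scalar estimate on~$(b_k+|\sigma|^2)^{\frac{p_k-2}{2}}$ produced in the proof of Proposition~\ref{PP2}. The only genuinely new feature, with respect to those two propositions, is the tangency constraint~$\xi_{n+1}=\sigma\cdot\xi$, which I would use only through its two elementary consequences~$|\xi|^2\le|\xi'|^2$ and~$|\xi'|^2=|\xi|^2+(\sigma\cdot\xi)^2\le(1+|\sigma|^2)|\xi|^2$.

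For the upper bound in~\eqref{1.9}, I would first estimate the ``cross'' terms in~\eqref{AIJ} in absolute value: since~$(\sigma\cdot\xi)^2\le|\sigma|^2|\xi|^2\le(b_k+|\sigma|^2)|\xi|^2$, one has $\big|c_k(p_k-2)(b_k+|\sigma|^2)^{\frac{p_k-4}{2}}(\sigma\cdot\xi)^2\big|\le c_k|p_k-2|\,(b_k+|\sigma|^2)^{\frac{p_k-2}{2}}|\xi|^2$, and therefore $\sum_{i,j=1}^n a_{ij}(\sigma)\xi_i\xi_j\le\sum_{k=1}^m c_k(|p_k-2|+1)(b_k+|\sigma|^2)^{\frac{p_k-2}{2}}|\xi|^2$. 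Then I would insert the bound $(b_k+|\sigma|^2)^{\frac{p_k-2}{2}}\le\sqrt{2/\mu}\,(1+|\sigma|)^{-1}\big(1/\mu+M^2\big)^{\frac{p_k-1}{2}}$ obtained inside the proof of Proposition~\ref{PP2} (via~\eqref{bj2}, \eqref{sig2} and~\eqref{BETA}), and finish with $|\xi|^2\le|\xi'|^2$. This gives exactly the constant~$C_2$ in~\eqref{cc2}.

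The lower bound is the substantive part. I would bound~\eqref{AIJ} termwise exactly as in the proof of Proposition~\ref{PP1}: for the indices with~$p_k\ge2$ the cross term is nonnegative and is simply discarded, while for~$p_k<2$ one uses~$(\sigma\cdot\xi)^2\le(b_k+|\sigma|^2)|\xi|^2$ to see that the~$k$-th summand is at least~$c_k(p_k-1)(b_k+|\sigma|^2)^{\frac{p_k-2}{2}}|\xi|^2$; keeping only the term~$k=1$ (the remaining summands being nonnegative) yields $\sum_{i,j=1}^n a_{ij}(\sigma)\xi_i\xi_j\ge c_1\min\{1,p_1-1\}\,(b_1+|\sigma|^2)^{\frac{p_1-2}{2}}|\xi|^2$. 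I would then convert~$|\xi|^2$ into~$|\xi'|^2$ via~$|\xi|^2\ge|\xi'|^2/(1+|\sigma|^2)$, and reduce the claim to the scalar inequality $(b_1+|\sigma|^2)^{\frac{p_1-2}{2}}/(1+|\sigma|^2)\ge\mu^{p_1/2}/\big((1+M^2)(1+|\sigma|)\big)$. This in turn follows by writing $(b_1+|\sigma|^2)^{\frac{p_1-2}{2}}=(b_1+|\sigma|^2)^{-\frac12}(b_1+|\sigma|^2)^{\frac{p_1-1}{2}}$, bounding $(b_1+|\sigma|^2)^{\frac{p_1-1}{2}}\ge\mu^{\frac{p_1-1}{2}}$ and $(b_1+|\sigma|^2)^{-\frac12}\ge(1/\mu+|\sigma|^2)^{-\frac12}$ by means of~\eqref{bj2} and~\eqref{sig2}, and then using $\mu(1/\mu+|\sigma|^2)=1+\mu|\sigma|^2\le1+|\sigma|^2\le(1+|\sigma|)^2$ together with~$1+|\sigma|^2\le1+M^2$. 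This produces the constant~$C_1$ in~\eqref{cc1}.

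The main obstacle is this lower bound. Unlike in Proposition~\ref{PP1}, the operator is genuinely degenerate in the direction of~$\sigma$: already for the pure $p$-Laplacian (i.e.\ $b_k=0$) with~$p_k$ close to~$1$, the form~\eqref{AIJ} vanishes on vectors~$\xi$ parallel to~$\sigma$. Hence the positivity of the left inequality in~\eqref{1.9} cannot come from the quadratic form on~$\R^n$ alone; one must use both~$b_k\ge\mu>0$ and the tangency constraint, which keeps~$(\sigma\cdot\xi)^2$ controlled by~$|\xi|^2$ rather than being an independent direction. Matching the precise exponent~$-1$ of~$(1+|\sigma|)$ on the right-hand side (rather than a larger negative power) is the delicate point, and it is what forces the asymmetric way of splitting~$(b_1+|\sigma|^2)^{\frac{p_1-2}{2}}$ above; the borderline case~$p_1=1$, in which~$\min\{1,p_1-1\}$ degenerates, has to be treated by retaining the full expression~$(p_1-2)(\sigma\cdot\xi)^2+(b_1+|\sigma|^2)|\xi|^2\ge b_1|\xi|^2\ge\mu|\xi|^2$ before dividing by~$1+|\sigma|^2$.
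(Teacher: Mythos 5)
Your proof is correct and follows essentially the same route as the paper's: the upper bound is obtained exactly as in the paper (estimate the cross terms in \eqref{AIJ} by $|p_k-2|$, then use the splitting $(b_k+|\sigma|^2)^{\frac{p_k-2}{2}}=(b_k+|\sigma|^2)^{-\frac12}(b_k+|\sigma|^2)^{\frac{p_k-1}{2}}$ together with \eqref{bj2}, \eqref{sig2} and \eqref{BETA}), and the lower bound uses the same termwise estimate of \eqref{AIJ} as in Proposition~\ref{PP1}, with the tangency constraint entering only through $|\xi'|^2\le(1+|\sigma|^2)|\xi|^2$ and the same splitting of the power of $b_1+|\sigma|^2$. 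The only difference is immaterial: you invoke $|\sigma|\le M$ in the final scalar estimate rather than directly in \eqref{XIT}, arriving at the same constants \eqref{cc1}--\eqref{cc2}.
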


\begin{proof} The argument is a careful modification of that used
in the proof of Proposition~\ref{PP1}, taking into special consideration
the $(n+1)$th component of the vector~$\xi'$.
\par
To prove the upper bound in~\eqref{1.9}, we recall~\eqref{AIJ} and perform the following computation:
\begin{eqnarray*}
\sum_{i,j=1}^n a_{ij}(\sigma)\xi_i\xi_j&\le&
\sum_{k=1}^m  \Big[ c_k|p_k-2|(b_k+|\sigma|^2)^{\frac{p_k-4}{2}}(\sigma\cdot\xi)^2+
c_k(b_k+|\sigma|^2)^{\frac{p_k-2}{2}}|\xi|^2\Big]\\
&\le&
\sum_{k=1}^m c_k(|p_k-2|+1)(b_k+|\sigma|^2)^{\frac{p_k-2}{2}}|\xi|^2
\\
&\le&
\sum_{k=1}^m c_k(|p_k-2|+1)
(b_k+|\sigma|^2)^{-\frac{1}{2}}
\left(\frac1\mu+M^2\right)^{\frac{p_k-1}{2}}|\xi|^2
\\
&\le&
\sum_{k=1}^m c_k(|p_k-2|+1)
(\mu+|\sigma|^2)^{-\frac{1}{2}}
\left(\frac1\mu+M^2\right)^{\frac{p_k-1}{2}}|\xi|^2\\
&\le&\frac1{\sqrt\mu}
\sum_{k=1}^m c_k(|p_k-2|+1)
(1+|\sigma|^2)^{-\frac{1}{2}}
\left(\frac1\mu+M^2\right)^{\frac{p_k-1}{2}}|\xi|^2
\end{eqnarray*}
thanks to~\eqref{bj2} and~\eqref{sig2}. Hence, recalling~\eqref{BETA}, we have
$$ \sum_{i,j=1}^n a_{ij}(\sigma)\xi_i\xi_j\le
\sqrt{\frac{2}{\mu}}
\sum_{k=1}^m c_k(|p_k-2|+1)
(1+|\sigma|)^{-1}
\left(\frac1\mu+M^2\right)^{\frac{p_k-1}{2}}|\xi|^2.$$
This proves the upper bound in~\eqref{1.9}, in light of~\eqref{cc2} and the fact that~$|\xi|\le|\xi'|$.
\par
Now we prove the lower bound in~\eqref{1.9}. For this, we use~\eqref{sig2} to see that
\begin{equation}\label{XIT}
|\xi'|^2=|\xi|^2+|\xi\cdot\sigma|^2\le (1+M^2)|\xi|^2.\end{equation}
Also, if~$p_k\le2$, then
\begin{eqnarray*}&&
c_k(p_k-2)(b_k+|\sigma|^2)^{\frac{p_k-4}{2}}(\sigma\cdot\xi)^2+
c_k(b_k+|\sigma|^2)^{\frac{p_k-2}{2}}|\xi|^2\\
&\ge&
-c_k(2-p_k)(b_k+|\sigma|^2)^{\frac{p_k-4}{2}}|\sigma|^2|\xi|^2+
c_k(b_k+|\sigma|^2)^{\frac{p_k-2}{2}}|\xi|^2\\
&\ge&-c_k(2-p_k)(b_k+|\sigma|^2)^{\frac{p_k-2}{2}}|\xi|^2+
c_k(b_k+|\sigma|^2)^{\frac{p_k-2}{2}}|\xi|^2\\
&=&c_k(p_k-1)(b_k+|\sigma|^2)^{\frac{p_k-2}{2}}|\xi|^2.
\end{eqnarray*}
This and~\eqref{AIJ} give that
\begin{eqnarray*}
\sum_{i,j=1}^n a_{ij}(\sigma)\xi_i\xi_j&\ge&
\sum_{{1\le k\le m}\atop{p_k\le2}}
c_k(p_k-1)(b_k+|\sigma|^2)^{\frac{p_k-2}{2}}|\xi|^2\\&&\qquad
+\sum_{{1\le k\le m}\atop{p_k>2}}
\Big[ c_k(p_k-2)(b_k+|\sigma|^2)^{\frac{p_k-4}{2}}(\sigma\cdot\xi)^2+
c_k(b_k+|\sigma|^2)^{\frac{p_k-2}{2}}|\xi|^2\Big]
\\&\ge&\sum_{{1\le k\le m}\atop{p_k\le2}}
c_k(p_1-1)(b_k+|\sigma|^2)^{\frac{p_k-2}{2}}|\xi|^2
+\sum_{{1\le k\le m}\atop{p_k>2}}
c_k(b_k+|\sigma|^2)^{\frac{p_k-2}{2}}|\xi|^2\\
&\ge& \min\{1,p_1-1\}\,
\sum_{k=1}^m
c_k(b_k+|\sigma|^2)^{\frac{p_k-2}{2}}|\xi|^2\\&\ge&
c_1\min\{1,p_1-1\}\,
(b_1+|\sigma|^2)^{\frac{p_1-2}{2}}|\xi|^2.
\end{eqnarray*}
Hence, in view of~\eqref{XIT}, we get
\begin{eqnarray*}
\sum_{i,j=1}^n a_{ij}(\sigma)\xi_i\xi_j&\ge&
\frac{c_1\min\{1,p_1-1\}}{1+M^2}
(b_1+|\sigma|^2)^{\frac{p_1-2}{2}}|\xi'|^2\\
&\ge& \frac{c_1\min\{1,p_1-1\}}{1+M^2}
(b_1+|\sigma|^2)^{-\frac{1}{2}} \mu^{\frac{p_1-1}{2}}|\xi'|^2\\
&\ge& \frac{c_1\min\{1,p_1-1\}}{1+M^2}
\left(\frac1\mu+|\sigma|^2\right)^{-\frac{1}{2}} \mu^{\frac{p_1-1}{2}}|\xi'|^2\\
&\ge& \frac{c_1\min\{1,p_1-1\}}{1+M^2}
(1+|\sigma|^2)^{-\frac{1}{2}} \mu^{\frac{p_1}{2}}|\xi'|^2.
\end{eqnarray*}
This and~\eqref{BETA} give that
$$ \sum_{i,j=1}^n a_{ij}(\sigma)\xi_i\xi_j\ge
\frac{c_1\min\{1,p_1-1\}}{ 1+M^2}
(1+|\sigma|)^{-1} \mu^{\frac{p_1}{2}}|\xi'|^2,$$
that is the lower bound in~\eqref{1.9}, thanks to~\eqref{cc1}.
\end{proof}

By means of the above conclusions, we are in the position of proving Propostion~\ref{STRUCT}:

\begin{proof} The claim in~(i) of Proposition~\ref{STRUCT} directly
follows from Propositions~\ref{PP0}
and~\ref{PP1}.
Similarly, the claim in~(ii) of Proposition~\ref{STRUCT}
is a consequence of Propositions~\ref{PP2}
and~\ref{PP3}.
\end{proof}
\vskip4mm
\par\noindent

\section{$P$-function computations}\label{THE}
\setcounter{equation}{0}
\vskip2mm
\noindent
The goal of this section is to introduce an appropriate
$P$-function
relative to equation~\eqref{a1.4} and establish a differential inequality for it
(combining this with the Maximum Principle, we will obtain
also the desired gradient bounds). To implement this strategy,
for such a solution~$u$, for all~$x\in\R^n$ we define
\begin{equation}\label{a2.3}
P(u;x):=2\Phi'(|\nabla u(x)|^2)|\nabla u(x)|^2-\Phi(|\nabla u(x)|^2)-2F(u(x)),
\end{equation}
and we prove the following result:

\begin{lem}\label{L1.1}
Let $\Omega$ be an open subset of $\mathbb{R}^n$.
Let $u$ be a solution of~\eqref{a1.4} in $\Omega$,
with~$\nabla u\neq0$ in $\Omega$, and
\begin{equation}\label{LAB}
{\mbox{$\Lambda(r)>0$ for all~$r>0$.}}\end{equation}
Let
\begin{equation}\label{a2.2}
d_{ij}(\sigma):=\frac{a_{ij}(\sigma)}{\Lambda(|\sigma|^2)}
\end{equation}
and
\begin{equation}\label{Bi}
B_i(x)=B_i(u;x):=-2\frac{f(u)}{\Lambda(|\nabla u|^2)}\bigg(1+\frac{|\nabla u|^2\Phi''(|\nabla u|^2)}{\Phi'(|\nabla u|^2)}\bigg)\frac{\partial u}{\partial x_i}-\frac{|\nabla u|^2}{\Lambda(|\nabla u|^2)}g_{\zeta_i}(\nabla u,Su).
\end{equation}
Then, we have that
\begin{equation}\label{a2.4}
\sum_{i,j}|\nabla u|^2\frac{\partial}{\partial x_j}\Big(d_{ij}(\nabla u)\frac{\partial P}{\partial x_i}\Big)+\sum_{i}B_i\frac{\partial P}{\partial x_i}\geq\frac{|\nabla P|^2}{2\Lambda(|\nabla u|^2)}+\mathscr{R},~~~~in~\Omega.
\end{equation}
\end{lem}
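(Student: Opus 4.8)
The plan is to compute directly the action of the operator on $P$, using the equation~\eqref{a1.4} to eliminate the third derivatives of $u$ that arise. First I would differentiate the definition~\eqref{a2.3} of $P$ with respect to $x_i$. Writing $v:=|\nabla u|^2$ and using $\Lambda(v)=2v\Phi''(v)+\Phi'(v)$ from~\eqref{a2.1}, one finds
\[
\frac{\partial P}{\partial x_i}=2\Lambda(v)\,\partial_i v-2f(u)\,u_i
=4\Lambda(v)\sum_k u_k u_{ki}-2f(u)\,u_i .
\]
This is the first key identity: it expresses $\nabla P$ linearly in the Hessian of $u$, and it will be reused throughout (in particular it motivates the precise form of $B_i$ in~\eqref{Bi}, which is designed so that the cross terms cancel). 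Next I would expand the elliptic operator $\sum_{i,j}|\nabla u|^2\,\partial_j\big(d_{ij}(\nabla u)\,\partial_i P\big)$ by the product rule; since $d_{ij}=a_{ij}/\Lambda$ and $a_{ij}(\sigma)=2\Phi''(|\sigma|^2)\sigma_i\sigma_j+\Phi'(|\sigma|^2)\delta_{ij}$, the second derivatives $\partial_j\partial_i P$ bring in third derivatives $u_{kij}$, which are contracted against $a_{ij}(\nabla u)$.

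The second — and central — step is to differentiate the PDE~\eqref{a1.4} itself. Written out, $\sum_{i,j}a_{ij}(\nabla u)\,u_{ij}=f(u)+g(\nabla u,Su)$; differentiating in $x_k$, multiplying by $u_k$ and summing over $k$ produces exactly the contraction $\sum_{i,j,k}a_{ij}(\nabla u)\,u_k\,u_{ijk}$ (up to lower-order Hessian-squared terms coming from $\partial_k a_{ij}$), together with the terms $f'(u)|\nabla u|^2$, $\sum_k g_{\zeta_j}(\nabla u,Su)\,u_{jk}u_k$, and $\sum_{k,j}g_{\eta_j}(\nabla u,Su)\,\partial_k(S^{[j]}u)\,u_k$. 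Substituting this into the expansion from the first step eliminates all third-order derivatives. What remains is a quadratic form in the Hessian of $u$ plus the terms involving $f$, $f'$, $g_{\zeta_j}$ and $g_{\eta_j}$. At this point one invokes the algebraic inequality — the classical core of the Bernstein technique — that the Hessian quadratic form controls $|\nabla(|\nabla u|^2)|^2$ from below, which after dividing by $\Lambda$ and accounting for the $B_i\,\partial_i P$ correction yields the term $\dfrac{|\nabla P|^2}{2\Lambda(|\nabla u|^2)}$ on the right-hand side. The $f'(u)|\nabla u|^2$ term has a sign and can be discarded (here $f\in C^1$ and $F'=f$ enter), while the leftover $f$, $g_{\zeta_j}$, $g_{\eta_j}$ contributions reassemble, after multiplying through by $2|\nabla u|^2$ and using $\Lambda=2v\Phi''+\Phi'$ and $\Phi'(v)$, into precisely the three summands defining $\mathscr{R}$ in~\eqref{a1.13}.

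The hard part is the bookkeeping in the second step: tracking the terms coming from $\partial_k a_{ij}(\nabla u)$ (which involve $\Phi'''$, hence the $C^{3,\alpha}_{\mathrm{loc}}$ hypothesis on $\Phi$) and showing that the genuinely third-order pieces cancel exactly against those produced by expanding $\partial_j\partial_i P$, leaving a clean quadratic form. One must be careful that the identity $\nabla P$ from the first step is used to convert the surviving Hessian-squared terms into $|\nabla P|^2$ modulo the $B_i$-terms, and that the coefficient of $|\nabla P|^2$ comes out to be exactly $\tfrac{1}{2\Lambda(v)}$ rather than something merely bounded below — this forces the precise normalization $d_{ij}=a_{ij}/\Lambda$ and the exact form of $B_i$. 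The condition $\Lambda(r)>0$ for $r>0$ in~\eqref{LAB} is used to guarantee all the divisions are legitimate on $\Omega$, where $\nabla u\neq 0$. Once these cancellations are verified, inequality~\eqref{a2.4} follows by collecting terms and discarding the nonnegative $f'$-contribution together with any remaining nonnegative square; no maximum principle is needed at this stage, only the identity and the elementary estimate for the Hessian quadratic form.
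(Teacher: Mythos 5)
Your overall strategy is the one the paper follows (differentiate $P$, expand $\sum_{i,j}\partial_j(d_{ij}\partial_i P)$, differentiate the equation in $x_k$ and contract with $u_k$ to eliminate third derivatives, then use Cauchy--Schwarz on the Hessian quadratic form), but as written the plan contains a step that would fail. You propose to handle the $f'$-contribution by saying that ``the $f'(u)|\nabla u|^2$ term has a sign and can be discarded.'' No sign hypothesis on $f'$ is available: $f$ is merely $C^1$, and the lemma must hold for arbitrary $f$. What actually saves the argument is an \emph{exact cancellation}, not a sign: differentiating the PDE produces $+f'(u)|\nabla u|^2$, while the term $-2f(u)u_i$ inside $\partial_i P$, contracted against $d_{ij}$, produces $-f'(u)\sum_{i,j}d_{ij}(\nabla u)u_iu_j$, and since
\begin{equation*}
\sum_{i,j}d_{ij}(\nabla u)\,u_iu_j=\frac{2\Phi''(|\nabla u|^2)|\nabla u|^4+\Phi'(|\nabla u|^2)|\nabla u|^2}{\Lambda(|\nabla u|^2)}=|\nabla u|^2,
\end{equation*}
the two $f'$-terms cancel identically. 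Without noticing this, your computation leaves an uncontrolled term of arbitrary sign and the inequality~\eqref{a2.4} cannot be concluded.

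Two further points. Your ``first key identity'' is off by a factor of $2$: since $\frac{d}{dr}\big(2r\Phi'(r)-\Phi(r)\big)=\Lambda(r)$, one has $\partial_i P=\Lambda(v)\,\partial_i v-2f(u)u_i=2\Lambda(v)\sum_k u_ku_{ki}-2f(u)u_i$, not $2\Lambda(v)\partial_i v-2f(u)u_i$; since you yourself stress that the coefficient $\tfrac1{2\Lambda}$ of $|\nabla P|^2$ and the exact form of $\mathscr{R}$ must come out on the nose, this slip propagates into wrong constants everywhere. Finally, the parts you label as ``bookkeeping'' are precisely where the work lies and are only asserted: one must verify that the $\Phi'''$-terms cancel in $\sum_{i,j}\partial_j d_{ij}(\nabla u)\,u_i$ (in the paper this reduces, via the equation, to a multiple of $\nabla P\cdot\nabla u$ plus a $g$-term), and that the terms $\sum_{k,j}g_{\zeta_j}(\nabla u,Su)u_{jk}u_k$ are converted, through $2\sum_k u_{jk}u_k=\tfrac1{\Lambda}(\partial_jP+2fu_j)$, into the $g_{\zeta_i}$-part of $B_i$ plus the third summand of $\mathscr{R}$. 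As it stands the proposal is a plausible outline of the correct method, but with the erroneous treatment of the $f'$-term it is not yet a proof.
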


\begin{proof} By \eqref{LAB}, the map $r\mapsto 2\Phi'(r)r-\Phi(r)$ is invertible,
and we denote by $\Psi$ its inverse. Notice that
\begin{equation}\label{a2.5}
\Psi\big(P(u;x)+2F(u(x))\big)=|\nabla u(x)|^2.
\end{equation}
Moreover, by the definition of $\Psi$ and~\eqref{a2.1}, we have
\begin{equation}\nonumber
1=\frac{\mathrm{d}}{\mathrm{d}r}\Big(\Psi\big(2\Phi'(r)r-\Phi(r)\big)\Big)=\Psi'\big(2\Phi'(r)r-\Phi(r)\big)\Lambda(r),
\end{equation}
hence
\begin{equation}\label{a2.6}
\Psi'\Big(2\Phi'(|\nabla u|^2)|\nabla u|^2-\Phi(|\nabla u|^2)\Big)=\frac{1}{\Lambda(|\nabla u|^2)}.
\end{equation}

Now, differentiating~\eqref{a2.3} and recalling~\eqref{a2.1}, we see that
\begin{equation}\label{a2.7}
\aligned
\frac{\partial P}{\partial x_i}&=2\Big(2\Phi''(|\nabla u(x)|^2)|\nabla u(x)|^2+\Phi'(|\nabla u(x)|^2)\Big)|\nabla u(x)|\frac{\partial|\nabla u(x)|}{\partial x_i}-2f(u)\frac{\partial u}{\partial x_i}\\
&=2\Lambda(|\nabla u|^2)\sum_{k}\frac{\partial^2u}{\partial x_i\partial x_k}\frac{\partial u}{\partial x_k}-2f(u)\frac{\partial u}{\partial x_i}.
\endaligned
\end{equation}
Hence, recalling~\eqref{a2.2}, we get
\begin{eqnarray}\nonumber
&&\sum_{i,j}\frac{\partial}{\partial x_j}\Big(d_{ij}(\nabla u)\frac{\partial P}{\partial x_i}\Big)\\\nonumber
&=&\sum_{i,j}\frac{\partial}{\partial x_j}\Big(-2f(u)d_{ij}(\nabla u)\frac{\partial u}{\partial x_i}+2\Lambda(|\nabla u|^2)\frac{a_{ij}(\nabla u)}{\Lambda(|\nabla u|^2)}\sum_{k}\frac{\partial^2u}{\partial x_i\partial x_k}\frac{\partial u}{\partial x_k}\Big)\\\label{a2.8}
&=&-2\sum_{i,j}\frac{\partial}{\partial x_j}\Big(f(u)d_{ij}(\nabla u)\frac{\partial u}{\partial_{x_i}}\Big)+2\sum_{i,j,k}\frac{\partial}{\partial x_j}\Big(a_{ij}(\nabla u)\frac{\partial^2u}{\partial x_i\partial x_k}\Big)\frac{\partial u}{\partial x_{k}}
\\&&+2\sum_{i,j,k}a_{ij}(\nabla u)\frac{\partial^2u}{\partial x_i\partial x_k}\frac{\partial^2 u}{\partial x_{j}\partial x_{k}}.\nonumber
\end{eqnarray}
Also,~\eqref{a1.5} gives that
\begin{equation}\label{a2.9}
\frac{\partial a_{ij}}{\partial\sigma_l}(\sigma)=\frac{\partial a_{lj}}{\partial\sigma_i}(\sigma).
\end{equation}
By~\eqref{a1.4}, we obtain
\begin{equation}\label{a2.10}
\sum_{i,j}a_{ij}(\nabla u)\frac{\partial^2 u}{\partial x_i\partial x_j}=f(u)+g(\nabla u, Su).
\end{equation}
Therefore, by~\eqref{a2.9} and~\eqref{a2.10}, for any fixed $k$, we have
\begin{eqnarray}\nonumber
&&\sum_{i,j}\frac{\partial}{\partial x_j}\Big(a_{ij}(\nabla u)\frac{\partial^2u}{\partial x_i\partial x_k}\Big)\\\nonumber
&=&\sum_{i,j}\Big(\frac{\partial a_{ij}(\nabla u)}{\partial x_j}\frac{\partial^2u}{\partial x_i\partial x_k}+a_{ij}(\nabla u)\frac{\partial^3u}{\partial x_i\partial x_k\partial x_j}\Big)\\\label{a2.12}
&=&\sum_{i,j}\frac{\partial}{\partial x_k}\Big(a_{ij}(\nabla u)\frac{\partial^2u}{\partial x_i\partial x_j}\Big)\\
&=&f'(u)\frac{\partial u}{\partial x_k}+\sum^{n}_{j=1}g_{\zeta_j}(\nabla u, Su)\frac{\partial^2 u}{\partial x_j\partial x_k}+\sum^{N-n}_{j=1}g_{\eta_j}(\nabla u, Su)\frac{\partial S^{[j]}u}{\partial x_k}.\nonumber
\end{eqnarray}
{F}rom~\eqref{a2.8} and~\eqref{a2.12}, we find that
\begin{eqnarray}\nonumber
&&\sum_{i,j}\frac{\partial}{\partial x_j}\Big(d_{ij}(\nabla u)\frac{\partial P}{\partial x_i}\Big)\\\label{a2.13}
&=&-2\sum_{i,j}f'(u)d_{ij}(\nabla u)\frac{\partial u}{\partial x_i}\frac{\partial u}{\partial x_j}-2f(u)\sum_{i,j}\frac{\partial}{\partial x_j}\Big(d_{ij}(\nabla u)\frac{\partial u}{\partial x_i}\Big)\\\nonumber
&&+2f'(u)\sum_{k}\frac{\partial u}{\partial x_k}\frac{\partial u}{\partial x_{k}}+2\sum_{k}\Bigg[\sum^{n}_{j=1}g_{\zeta_j}(\nabla u, Su)\frac{\partial^2 u}{\partial x_{j}\partial x_{k}}+\sum^{N-n}_{j=1}g_{\eta_j}(\nabla u, Su)\frac{\partial S^{[j]}u}{\partial x_k}\Bigg]\frac{\partial u}{\partial x_{k}}
\\&&+2\sum_{i,j,k}a_{ij}(\nabla u)\frac{\partial^2u}{\partial x_i\partial x_k}\frac{\partial^2 u}{\partial x_{j}\partial x_{k}}.\nonumber
\end{eqnarray}
Furthermore, from~\eqref{a2.1} and~\eqref{a2.2}, we obtain
\begin{eqnarray}\nonumber
&&-\sum_{i,j}f'(u)d_{ij}(\nabla u)\frac{\partial u}{\partial x_i}\frac{\partial u}{\partial x_j}+f'(u)\sum_{k}\frac{\partial u}{\partial x_k}\frac{\partial u}{\partial x_{k}}\\\nonumber
&=&-\sum_{i,j}\frac{f'(u)}{\Lambda(|\nabla u|^2)}\Big[2\Phi''(|\nabla u|^2)\Big(\frac{\partial u}{\partial x_i}\Big)^2\Big(\frac{\partial u}{\partial x_j}\Big)^2+\Phi'(|\nabla u|^2)\delta_{ij}\frac{\partial u}{\partial x_i}\frac{\partial u}{\partial x_j}\Big]\\\label{a2.15}
&&+f'(u)\sum_{k}\Big(\frac{\partial u}{\partial x_k}\Big)^2\\\nonumber
&=&-\frac{f'(u)}{\Lambda(|\nabla u|^2)}\Big[2\Phi''(|\nabla u|^2)|\nabla u|^4+\Phi'(|\nabla u|^2)|\nabla u|^2\Big]
+f'(u)|\nabla u|^2\\\nonumber
&=0.
\end{eqnarray}
Plugging this into~\eqref{a2.13}, we conclude that
\begin{eqnarray}\nonumber
&&\sum_{i,j}\frac{\partial}{\partial x_j}\Big(d_{ij}(\nabla u)\frac{\partial P}{\partial x_i}\Big)\\
&=&-2f(u)\sum_{i,j}\frac{\partial}{\partial x_j}\Big(d_{ij}(\nabla u)\frac{\partial u}{\partial_{x_i}}\Big)
+2\sum_{k}\Bigg[\sum^{n}_{j=1}g_{\zeta_j}(\nabla u, Su)u_{jk}\\\label{a2.16}
&&+\sum^{N-n}_{j=1}g_{\eta_j}(\nabla u, Su)\frac{\partial S^{[j]}u}{\partial x_k}\Bigg]\frac{\partial u}{\partial x_{k}}+2\sum_{i,j,k}a_{ij}(\nabla u)\frac{\partial^2u}{\partial x_i\partial x_k}\frac{\partial^2 u}{\partial x_{j}\partial x_{k}}.\nonumber
\end{eqnarray}
Also, it follows from~\eqref{a2.2} and~\eqref{a2.10} that
\begin{equation}\nonumber
\sum_{i,j}d_{ij}(\nabla u)\frac{\partial^2 u}{\partial x_i\partial x_j}=\frac{f(u)+g(\nabla u, Su)}{\Lambda(|\nabla u|^2)},
\end{equation}
and so~\eqref{a2.16} becomes
\begin{eqnarray}\nonumber
&&\sum_{i,j}\frac{\partial}{\partial x_j}\Big(d_{ij}(\nabla u)\frac{\partial P}{\partial x_i}\Big)\\\nonumber
&=&-2f(u)\sum_{i,j}\frac{\partial}{\partial x_j}d_{ij}(\nabla u)\frac{\partial u}{\partial x_i}-\frac{2f(u)[f(u)+g(\nabla u, Su)]}{\Lambda(|\nabla u|^2)}\\\label{a2.18}
&&+2\sum_{k}\Bigg[\sum^{n}_{j=1}g_{\zeta_j}(\nabla u, Su)u_{jk}+\sum^{N-n}_{j=1}g_{\eta_j}(\nabla u, Su)\frac{\partial S^{[j]}u}{\partial x_k}\Bigg]\frac{\partial u}{\partial x_{k}}\\\nonumber
&&+2\sum_{i,j,k}a_{ij}(\nabla u)\frac{\partial^2u}{\partial x_i\partial x_k}\frac{\partial^2 u}{\partial x_{j}\partial x_{k}}.\nonumber
\end{eqnarray}
Moreover, making use of~\eqref{a1.5},~\eqref{a2.1} and~\eqref{a2.2}, we obtain
\begin{eqnarray}\nonumber
&&\sum_{i,j}\frac{\partial}{\partial x_j}d_{ij}(\nabla u)\frac{\partial u}{\partial x_i}\\\nonumber
&=&\sum_{i,j}\frac{\partial u}{\partial x_i}\frac{\partial}{\partial x_j}\frac{2\Phi''(|\nabla u|^2)\frac{\partial u}{\partial x_i}\frac{\partial u}{\partial x_j}+\Phi'(|\nabla u|^2)\delta_{ij}}{2\Phi''(|\nabla u|^2)|\nabla u|^2+\Phi'(|\nabla u|^2)}\\\label{a2.19}
&&-\sum_{i,j}\frac{\partial u}{\partial x_i}\frac{\big[2\Phi''\frac{\partial u}{\partial x_i}\frac{\partial u}{\partial x_j}+\Phi' \delta_{ij}\big]\big[4\Phi'''|\nabla u|^2+4\Phi''+2\Phi''\big]\sum_{k}\frac{\partial u}{\partial x_k}\frac{\partial^2 u}{\partial x_k\partial x_j}}
{\big[2\Phi''(|\nabla u|^2)|\nabla u|^2+\Phi'(|\nabla u|^2)\big]^2}\\\nonumber
&=&\frac{2\Phi''\big[\Sigma_j|\nabla u|^2\frac{\partial^2 u}{\partial x^2_j}-\sum_{i,j}\frac{\partial u}{\partial x_i}\frac{\partial^2 u}{\partial x_i\partial x_j}\frac{\partial u}{\partial x_j}\big]}{2\Phi''(|\nabla u|^2)|\nabla u|^2+\Phi'(|\nabla u|^2)}\\\nonumber
&=&\frac{2\Phi''(|\nabla u|^2)}{\Lambda(|\nabla u|^2)}\Big(|\nabla u|^2\Delta u-\sum_{i,j}\frac{\partial^2 u}{\partial x_i\partial x_j}\frac{\partial u}{\partial x_i}\frac{\partial u}{\partial x_j}\Big).\nonumber
\end{eqnarray}

Also, from~\eqref{a1.5} and~\eqref{a2.10}, we get
\begin{eqnarray*}
f(u)+g(\nabla u, Su)&=&\sum_{i,j}\Big(2\Phi''(|\nabla u|^2)\frac{\partial u}{\partial x_i}\frac{\partial u}{\partial x_j}+\Phi'(|\nabla u|^2)\delta_{ij}\Big)\frac{\partial^2 u}{\partial x_i\partial x_j}\\
&=&2\Phi''(|\nabla u|^2)\sum_{i,j}\frac{\partial^2 u}{\partial x_i\partial x_j}\frac{\partial u}{\partial x_i}\frac{\partial u}{\partial x_j}+\Phi'(|\nabla u|^2)\Delta u,
\end{eqnarray*}
from which we obtain
\begin{equation}\nonumber
\Delta u=\frac{f(u)+g(\nabla u, Su)}{\Phi'(|\nabla u|^2)}-2\frac{\Phi''(|\nabla u|^2)}{\Phi'(|\nabla u|^2)}\sum_{i,j}\frac{\partial^2 u}{\partial x_i\partial x_j}\frac{\partial u}{\partial x_i}\frac{\partial u}{\partial x_j}.
\end{equation}

Therefore, recalling also~\eqref{a2.7}, we write~\eqref{a2.19} as
\begin{eqnarray}\nonumber
\sum_{i,j}\frac{\partial}{\partial x_j}d_{ij}(\nabla u)\frac{\partial u}{\partial x_i}&=&\frac{2\Phi''(|\nabla u|^2)}{\Lambda(|\nabla u|^2)\Phi'(|\nabla u|^2)}\Big[|\nabla u|^2(f+g)-\Lambda(|\nabla u|^2)\sum_{i,j}\frac{\partial^2 u}{\partial x_i\partial x_j}\frac{\partial u}{\partial x_i}\frac{\partial u}{\partial x_j}\Big]\\\label{a2.20}
&=&\frac{-\Phi''(|\nabla u|^2)}{\Lambda(|\nabla u|^2)\Phi'(|\nabla u|^2)}\sum_{i}\frac{\partial P}{\partial x_i}\frac{\partial u}{\partial x_i}+\frac{2g\Phi''(|\nabla u|^2)|\nabla u|^2}{\Lambda(|\nabla u|^2)\Phi'(|\nabla u|^2)}.
\end{eqnarray}
Thus, exploiting~\eqref{a2.18}, one has
\begin{eqnarray}\nonumber
&&\sum_{i,j}\frac{\partial}{\partial x_j}\Big(d_{ij}(\nabla u)\frac{\partial P}{\partial x_i}\Big)\\\nonumber
&=&\frac{2f(u)\Phi''(|\nabla u|^2)}{\Lambda(|\nabla u|^2)\Phi'(|\nabla u|^2)}\sum_{i}\frac{\partial P}{\partial x_i}\frac{\partial u}{\partial x_i}-\frac{4fg\Phi''(|\nabla u|^2)|\nabla u|^2}{\Lambda(|\nabla u|^2)\Phi'(|\nabla u|^2)}-\frac{2f(u)[f(u)+g(\nabla u, Su)]}{\Lambda(|\nabla u|^2)}\\\label{a2.21}
&&+2\sum_{k}\Bigg[\sum^{n}_{j=1}g_{\zeta_j}(\nabla u, Su)u_{jk}+\sum^{N-n}_{j=1}g_{\eta_j}(\nabla u, Su)\frac{\partial S^{[j]}u}{\partial x_k}\Bigg]\frac{\partial u}{\partial x_{k}}\\\nonumber
&&+2\sum_{i,j,k}a_{ij}(\nabla u)\frac{\partial^2u}{\partial x_i\partial x_k}\frac{\partial^2 u}{\partial x_{j}\partial x_{k}}.\nonumber
\end{eqnarray}

Now we set
\begin{equation}\nonumber
z_{k}=\sum_{i}\frac{\partial^2 u}{\partial x_i\partial x_k}\frac{\partial u}{\partial x_i},
\end{equation}
and we use Schwarz Inequality to see that
\begin{equation}\nonumber
|z_{k}|\leq\sqrt{\sum_{i}\Big(\frac{\partial^2 u}{\partial x_i\partial x_k}\Big)^2}\sqrt{\sum_{i}\Big(\frac{\partial u}{\partial x_i}\Big)^2},
\end{equation}
and so
\begin{eqnarray*}
\sum_{i,j,k}\frac{\partial^2 u}{\partial x_i\partial x_k}\frac{\partial u}{\partial x_i}\frac{\partial^2 u}{\partial x_j\partial x_k}\frac{\partial u}{\partial x_j}&=&\sum_{k}|z_{k}|^2\leq\sum_{k}\bigg(\sum_{i}\Big(\frac{\partial^2 u}{\partial x_i\partial x_k}\Big)^2\bigg)\bigg(\sum_{i}\Big(\frac{\partial u}{\partial x_i}\Big)^2\bigg)\\
&=&|\nabla u|^2\sum_{i,k}\Big(\frac{\partial^2 u}{\partial x_i\partial x_k}\Big)^2.
\end{eqnarray*}
This and~\eqref{a1.5} give that
\begin{eqnarray}\nonumber
&&\sum_{i,j,k}a_{ij}(\nabla u)\frac{\partial^2u}{\partial x_i\partial x_k}\frac{\partial^2 u}{\partial x_{j}\partial x_{k}}\\\nonumber
&=&\sum_{i,j,k}2\Phi''(|\nabla u|^2)\frac{\partial^2 u}{\partial x_i\partial x_k}\frac{\partial^2 u}{\partial x_j\partial x_k}\frac{\partial u}{\partial x_i}\frac{\partial u}{\partial x_j}+\sum_{i,j,k}\Phi'(|\nabla u|^2)\frac{\partial^2 u}{\partial x_i\partial x_k}\frac{\partial^2 u}{\partial x_j\partial x_k}\delta_{ij}\\\label{a2.22}
&\geq&\sum_{i,j,k}2\Phi''(|\nabla u|^2)\frac{\partial^2 u}{\partial x_i\partial x_k}\frac{\partial u}{\partial x_i}\frac{\partial^2 u}{\partial x_j\partial x_k}\frac{\partial u}{\partial x_j}+\sum_{i,j,k}\frac{\Phi'(|\nabla u|^2)}{|\nabla u|^2}\frac{\partial^2 u}{\partial x_i\partial x_k}\frac{\partial u}{\partial x_i}\frac{\partial^2 u}{\partial x_j\partial x_k}\frac{\partial u}{\partial x_j}\\\nonumber
&=&\frac{\Lambda(|\nabla u|^2)}{|\nabla u|^2}\sum_{i,j,k}\frac{\partial^2 u}{\partial x_i\partial x_k}\frac{\partial u}{\partial x_i}\frac{\partial^2 u}{\partial x_j\partial x_k}\frac{\partial u}{\partial x_j}.\nonumber
\end{eqnarray}
Moreover, by~\eqref{a2.7}, we have that
\begin{equation*}
\sum_{i,j,k}\frac{\partial^2 u}{\partial x_i\partial x_k}\frac{\partial u}{\partial x_i}\frac{\partial^2 u}{\partial x_j\partial x_k}\frac{\partial u}{\partial x_j}=\frac{1}{4\Lambda^2(|\nabla u|^2)}\sum_{k}\Big(\frac{\partial P}{\partial x_{k}}+2f\frac{\partial u}{\partial x_k}\Big)^2.
\end{equation*}
This and~\eqref{a2.22} lead to
\begin{eqnarray*}
\sum_{i,j,k}a_{ij}(\nabla u)\frac{\partial^2u}{\partial x_i\partial x_k}\frac{\partial^2 u}{\partial x_{j}\partial x_{k}}&\geq&
\frac{1}{4\Lambda(|\nabla u|^2)|\nabla u|^2}\sum_{k}\Big(\frac{\partial P}{\partial x_{k}}+2f\frac{\partial u}{\partial x_k}\Big)^2\\
&=&\frac{|\nabla P|^2}{4\Lambda(|\nabla u|^2)|\nabla u|^2}+\frac{f\sum_{k}\frac{\partial P}{\partial x_k}\frac{\partial u}{\partial x_k}}{\Lambda(|\nabla u|^2)|\nabla u|^2}+\frac{f^2}{\Lambda(|\nabla u|^2)}.
\end{eqnarray*}
By substituting this into~\eqref{a2.21}, we obtain
\begin{eqnarray*}
&&\sum_{i,j}\frac{\partial}{\partial x_j}\Big(d_{ij}(\nabla u)\frac{\partial P}{\partial x_i}\Big)\\
&\geq&\frac{2f(u)\Phi''(|\nabla u|^2)}{\Lambda(|\nabla u|^2)\Phi'(|\nabla u|^2)}\sum_{i}\frac{\partial P}{\partial x_i}\frac{\partial u}{\partial x_i}-\frac{4fg\Phi''(|\nabla u|^2)|\nabla u|^2}{\Lambda(|\nabla u|^2)\Phi'(|\nabla u|^2)}-\frac{2f(u)[f(u)+g(\nabla u, Su)]}{\Lambda(|\nabla u|^2)}\\
&&+2\sum_{k}\Bigg[\sum^{n}_{j=1}g_{\zeta_j}(\nabla u, Su)\frac{\partial^2 u}{\partial x_{j}\partial x_{k}}+\sum^{N-n}_{j=1}g_{\eta_j}(\nabla u, Su)\frac{\partial S^{[j]}u}{\partial x_k}\Bigg]\frac{\partial u}{\partial x_{k}}
+\frac{|\nabla P|^2}{2\Lambda(|\nabla u|^2)|\nabla u|^2}\\
&&+2\frac{f\sum_{k}\frac{\partial P}{\partial x_k}\frac{\partial u}{\partial x_k}}{\Lambda(|\nabla u|^2)|\nabla u|^2}+2\frac{f^2}{\Lambda(|\nabla u|^2)}.
\end{eqnarray*}
Therefore, we have that
\begin{eqnarray}\nonumber
&&\sum_{i,j}\frac{\partial}{\partial x_j}\Big(d_{ij}(\nabla u)\frac{\partial P}{\partial x_i}\Big)-2\frac{f(u)}{\Lambda(|\nabla u|^2)|\nabla u|^2}\bigg(1+\frac{\Phi''(|\nabla u|^2)|\nabla u|^2}{\Phi'(|\nabla u|^2)}\bigg)\sum_{k}\frac{\partial P}{\partial x_k}\frac{\partial u}{\partial x_k}
\\\nonumber
&\geq&-\frac{4fg\Phi''(|\nabla u|^2)|\nabla u|^2}{\Lambda(|\nabla u|^2)\Phi'(|\nabla u|^2)}-\frac{2f(u)g(\nabla u, Su)}{\Lambda(|\nabla u|^2)}\\\label{a2.23}
&&+2\sum_{k}\Bigg[\sum^{n}_{j=1}g_{\zeta_j}(\nabla u, Su)\frac{\partial^2 u}{\partial x_{j}\partial x_{k}}+\sum^{N-n}_{j=1}g_{\eta_j}(\nabla u, Su)\frac{\partial S^{[j]}u}{\partial x_k}\Bigg]\frac{\partial u}{\partial x_{k}}
+\frac{|\nabla P|^2}{2\Lambda(|\nabla u|^2)|\nabla u|^2}\\\nonumber
&=&-\frac{2f(u)g(\nabla u, Su)}{\Phi'(|\nabla u|^2)}+2\sum_{k}\Bigg[\sum^{n}_{j=1}g_{\zeta_j}(\nabla u, Su)\frac{\partial^2 u}{\partial x_{j}\partial x_{k}}+\sum^{N-n}_{j=1}g_{\eta_j}(\nabla u, Su)\frac{\partial S^{[j]}u}{\partial x_k}\Bigg]\frac{\partial u}{\partial x_{k}}\\\nonumber
&&+\frac{|\nabla P|^2}{2\Lambda(|\nabla u|^2)|\nabla u|^2}.\nonumber
\end{eqnarray}

Now, for $j$ fixed, we use~\eqref{a2.5} to get that
\begin{equation}\nonumber
2\sum_{k}\frac{\partial^2 u}{\partial x_j\partial x_k}\frac{\partial u}{\partial x_k}=\frac{\partial|\nabla u(x)|^2}{\partial x_j}=\Psi'(P+2F)\bigg(\frac{\partial P}{\partial x_j}+2f(u)\frac{\partial u}{\partial x_j}\bigg).
\end{equation}
Consequently, by~\eqref{a2.6}, we conclude
\begin{eqnarray*}
2\sum_{k,j}g_{\zeta_j}(\nabla u, Su)\frac{\partial^2 u}{\partial x_{j}\partial x_{k}}\frac{\partial u}{\partial x_{k}}&=\Psi'(P+2F)\sum_{j}g_{\zeta_j}(\nabla u, Su)\bigg(\frac{\partial P}{\partial x_j}+2f(u)\frac{\partial u}{\partial x_j}\bigg)\\
&=\frac{1}{\Lambda(|u(x)|^2)}\sum_{j}g_{\zeta_j}(\nabla u, Su)\bigg(\frac{\partial P}{\partial x_j}+2f(u)\frac{\partial u}{\partial x_j}\bigg).
\end{eqnarray*}
Multiplying both sides of~\eqref{a2.23} by $|\nabla u|^2$, we see that
\begin{eqnarray*}
&&\sum_{i,j}|\nabla u|^2\frac{\partial}{\partial x_j}\Big(d_{ij}(\nabla u)\frac{\partial P}{\partial x_i}\Big)-2\frac{f(u)}{\Lambda(|\nabla u|^2) }\bigg(1+\frac{\Phi''(|\nabla u|^2)|\nabla u|^2}{\Phi'(|\nabla u|^2)}\bigg)\sum_{k}\frac{\partial P}{\partial x_k}\frac{\partial u}{\partial x_k}\\
&\geq&-\frac{2f(u)g(\nabla u, Su)|\nabla u|^2}{\Phi'(|\nabla u|^2)}+\frac{|\nabla P|^2}{2\Lambda(|\nabla u|^2)}\\
&&+2|\nabla u|^2\sum_{k}\Bigg[\sum^{n}_{j=1}g_{\zeta_j}(\nabla u, Su)\frac{\partial^2 u}{\partial x_{j}\partial x_{k}}+\sum^{N-n}_{j=1}g_{\eta_j}(\nabla u, Su)\frac{\partial S^{[j]}u}{\partial x_k}\Bigg]\frac{\partial u}{\partial x_{k}}\\
&=&-\frac{2f(u)g(\nabla u, Su)|\nabla u|^2}{\Phi'(|\nabla u|^2)}+2|\nabla u|^2\sum^{n}_{k=1}\sum^{N-n}_{j=1}g_{\eta_j}(\nabla u, Su)\frac{\partial S^{[j]}u}{\partial x_k}\frac{\partial u}{\partial x_{k}}\\
&&+\frac{|\nabla u|^2}{\Lambda(|u(x)|^2)}\sum_{j}g_{\zeta_j}(\nabla u, Su)\bigg(\frac{\partial P}{\partial x_j}+2f(u)\frac{\partial u}{\partial x_j}\bigg)+\frac{|\nabla P|^2}{2\Lambda(|\nabla u|^2)} \\
&=&\mathscr{R}(x)+\frac{|\nabla u|^2}{\Lambda(|u(x)|^2)}\sum_{j}g_{\zeta_j}(\nabla u, Su)\frac{\partial P}{\partial x_j}+\frac{|\nabla P|^2}{2\Lambda(|\nabla u|^2)}.
\end{eqnarray*}
{F}rom this, we obtain the desired result in~\eqref{a2.4}.
\end{proof}

\vskip4mm
\par\noindent
\section{Proof of Theorem~\ref{T1.2}}\label{ST1.2}
\setcounter{equation}{0}
\vskip2mm
\noindent
This section contains the proof of the pointwise gradient estimate
in~\eqref{GRAD}. This relies on Lemma~\ref{L1.1} and the Maximum Principle.
The technical details go as follows:\smallskip

\begin{proof}[Proof of Theorem~\ref{T1.2}]
First of all, we observe that~\eqref{LAB} holds true.
Indeed, taking~$\xi:=(1,0,\dots,0)$ and~$\sigma:=\sqrt{r}\,\xi$,
we deduce from~\eqref{a1.5} and~\eqref{a2.1} that
\begin{equation}\label{7-IA}
\sum_{i,j=1}^n a_{ij}(\sigma)\xi_i\xi_j=
2\Phi''(|\sigma|^2)(\sigma\cdot\xi)^2+\Phi'(|\sigma|^2)|\xi|^2=
2\Phi''(r)r+\Phi'(r)=\Lambda(r).
\end{equation}
Hence, if Assumption~A is satisfied, we obtain
\begin{equation}\label{jms}
\Lambda(r) =\sum_{i,j=1}^n a_{ij}(\sigma)\xi_i\xi_j
\ge C_1(a+|\sigma|)^{p-2}|\xi|^2=C_1(a+\sqrt{r})^{p-2}>0.\end{equation}
If instead Assumption~B is satisfied, we deduce from~\eqref{7-IA} that
\begin{equation}\label{3232}\begin{split}&
\Lambda(r) =\sum_{i,j=1}^n a_{ij}(\sigma)\xi_i\xi_j
\ge
C_1(1+|\sigma|)^{-1}|\xi'|^2=
C_1(1+|\sigma|)^{-1}\big(|\xi|^2+(\sigma\cdot\xi)^2\big)\\&\qquad\qquad=
C_1(1+\sqrt{r})^{-1} (1+r)>0.\end{split}
\end{equation}
This observation and~\eqref{jms} show that~\eqref{LAB}
is satisfied, and therefore we are in the position of applying
Lemma~\ref{L1.1}. In this way,
recalling~\eqref{COND} and~\eqref{a2.4}, we see
\begin{equation}\label{a3.0}
\sum_{i,j}\frac{\partial}{\partial x_{j}}\bigg(d_{ij}(\nabla u(x))
\frac{\partial P(u;x)}{\partial x_{i}}\bigg)+\frac{B(u;x)\cdot\nabla P(u;x)}{|\nabla u(x)|^2}\geq0
\end{equation}
in $\{\nabla u\neq0\}$, where the notations
in~\eqref{a2.3} and~\eqref{Bi} have been utilized.

{F}rom this, we can
repeat some classical arguments
used also in the proof of Theorem~1.2 in~\cite{MR3049726}
to obtain our Theorem~\ref{T1.2}. We show the arguments in full detail
for the facility of the reader.
Besides, in order to address the general case treated in this paper, these classical arguments need to be carefully adapted, producing a number of additional technical difficulties.

Recalling the notation in~\eqref{a2.3}, we define
\begin{equation}\label{PoP}
P_0:=\sup_{x\in\mathbb{R}^n}P(u;x).
\end{equation}
We claim that
\begin{equation}\label{a3.2}
P_0\leq0.
\end{equation}
To prove this, we assume by contradiction that
\begin{equation}\label{a3.3}
P_0>0.
\end{equation}
First, take sequence $z_{k}\in\mathbb{R}^n$ such that
\begin{equation}\label{333}
\lim_{k\rightarrow+\infty}P(u; z_k)=P_0.
\end{equation}
We can define~$w_k(x)=u(x+z_k)$. This function satisfies an elliptic equation
with bounded right hand side and therefore,
by elliptic regularity theory (possibly reducing Assumption~B
to Assumption~A with~$p=2$), we have that, for every~$R>0$,
\begin{equation}\label{REGO}
\|w_k\|_{C^{1,\gamma}(B_R)}<+\infty,
\end{equation}
for some~$\gamma\in(0,1)$.

Also, from~\eqref{PoP}, we have that
\begin{equation}\label{bYAkdc}
P_0\ge P(w_k;x),\qquad{\mbox{ for all }}x\in\R^n.
\end{equation}
Furthermore, by~\eqref{a2.3}, we get
\begin{eqnarray*}&& P(u;z_k)=2\Phi'(|\nabla u(z_k)|^2)|\nabla u(z_k)|^2
-\Phi(|\nabla u(z_k)|^2)-2F(u(z_k))\\&&\qquad=
2\Phi'(|\nabla w_k(0)|^2)|\nabla w_k(0)|^2
-\Phi(|\nabla w_k(0)|^2)-2F(w_k(0))
=P(w_k;0).\end{eqnarray*}
In view of this and~\eqref{333},
we conclude that
\begin{equation}\label{a3.4}
\lim_{k\rightarrow+\infty}P(w_k;0)=P_0.
\end{equation}
By the Theorem of Ascoli-Arzel\`a (and up to a subsequence) and possibly renaming~$\gamma$,
we may suppose that $w_{k}$ converges to some $w$ in $C^{1,\gamma}_{\mathrm{loc}}(\mathbb{R}^n)$, and therefore, by~\eqref{a2.3}, we see
$$ \lim_{k\rightarrow+\infty}P(w_k;x)=P(w;x),\qquad{\mbox{ for all $x\in\R^n$.}}$$
Using this, \eqref{bYAkdc}
and~\eqref{a3.4}, we thereby obtain
\begin{equation}\label{PoPOAsdK}
P(w;x)=\lim_{k\rightarrow+\infty}P(w_k;x)\le P_0
=\lim_{k\rightarrow+\infty}P(w_k;0)=P(w;0).
\end{equation}
Now, we define
\begin{equation*}
\mathcal{N}:=\{x\in\mathbb{R}^n\mathrm{~s.t.~}P(w;x)=P_0\}.
\end{equation*}
We observe that $0\in\mathcal{N}$, thanks to~\eqref{PoPOAsdK}, and hence
\begin{equation}\label{a3.5}
\mathcal{N}\neq\varnothing.
\end{equation}
Also,
by the continuity of~$P$ and~$w$,
\begin{equation}\label{a3.6}
\mathcal{N}\mathrm{~is~closed.}
\end{equation}
Here, we denote $\Psi^{-1}$ by $\Gamma$ for simplicity, so
\begin{equation}\label{GAMMAR} \Gamma(r):=2\Phi'(r)r-\Phi(r),\end{equation}
and we
claim that for all~$r\in\left[0,\|u\|_{W^{1,\infty}(\mathbb{R}^n)}\right]$
\begin{equation}\label{NONP}
\Gamma(r)\le C\sqrt{r}.
\end{equation}
To prove this, we first remark that
\begin{equation}\label{NONP0}
\Gamma(0)=0,\end{equation}
by $\Phi(0)=0$ and either~\eqref{083-384-1} or~\eqref{083-384-2}. Moreover,
taking~$\sigma:=\sqrt{r}e_1$,
$\xi:=e_1$, $\xi_{n+1}:=\sqrt{r}$, by~\eqref{a1.5} we see that
\begin{eqnarray*}
a_{ij}(\sigma)\xi_i\xi_j&
=&\begin{cases}
2\Phi''(r)r+\Phi'(r)=\Gamma'(r)
&{\mbox{ if }}i=j=1,\\
0&{\mbox{ otherwise,}}
\end{cases}
\end{eqnarray*}
and accordingly
\begin{equation}\label{767682} \sum_{i,j=1}^n a_{ij}(\sigma)\xi_i\xi_j=\Gamma'(r).\end{equation}
Now, to prove~\eqref{NONP} we distinguish two
cases, according to whether Assumption~A
or Assumption~B is satisfied. First of all, if Assumption~A is satisfied, we use~\eqref{a1.7}
and~\eqref{767682}
to see that
\begin{equation} \label{AxTGS}
\Gamma'(r) \le C_2(a+|\sigma|)^{p-2}|\xi|^2=
C_2(a+\sqrt{r})^{p-2}.\end{equation}
We now distinguish two subcases, depending on~$p$.
If~$p\ge2$, we deduce from~\eqref{AxTGS} that
$$ \Gamma'(r)\le C\left(1+r^{\frac{p-2}2}\right),$$
for some~$C>0$. This and~\eqref{NONP0} yield that
\begin{eqnarray*}&& \Gamma(r)=\int_0^r\Gamma'(\rho)\,d\rho\le
C\left(r+\frac{2}{p}r^{\frac{p}{2}}\right)
=C \left(\sqrt{r}+\frac{2}{p}r^{\frac{p-1}{2}}\right)\,\sqrt{r}\\&&\qquad\qquad\le
C \left(\sqrt{\|u\|_{W^{1,\infty}(\mathbb{R}^n)}}+\frac{2}{p}\|u\|_{W^{1,\infty}(\mathbb{R}^n)}^{\frac{p-1}{2}}\right)\,\sqrt{r},
\end{eqnarray*}
for all~$r\in\left[0,\|u\|_{W^{1,\infty}(\mathbb{R}^n)}\right]$,
and this gives~\eqref{NONP}, up to renaming~$C>0$.

On the other hand, if~$p\in(1,2)$,
we deduce from~\eqref{AxTGS} that
$$ \Gamma'(r)\le \frac{C_2}{(a+\sqrt{r})^{2-p}}\le
\frac{C_2}{r^{\frac{2-p}{2}}},$$
which, together with~\eqref{NONP0}, gives that
\begin{eqnarray*}&& \Gamma(r)=\int_0^r\Gamma'(\rho)\,d\rho\le
\frac{2C_2 r^{\frac{p}{2}}}{p}\le
\frac{2C_2 \|u\|_{W^{1,\infty}(\mathbb{R}^n)}^{\frac{p-1}{2}}}{p}\,\sqrt{r},
\end{eqnarray*}
for all~$r\in\left[0,\|u\|_{W^{1,\infty}(\mathbb{R}^n)}\right]$,
and this gives~\eqref{NONP}.

It remains to prove~\eqref{NONP} if Assumption~B holds true.
In this case, we use~\eqref{a1.9}
and~\eqref{767682}
to see that
$$\Gamma'(r)\le C_2(1+|\sigma|)^{-1}|\xi'|^2=
C_2(1+|\sigma|)^{-1}(|\xi|^2+|\xi_{n+1}|^2)=
C_2(1+\sqrt{r})^{-1}(1+r)
\le C_2(1+r).
$$
This and~\eqref{NONP0} give that
$$ \Gamma(r)\le C_2\left(r+\frac{r^2}2\right)\le
C_2\left(\sqrt{\|u\|_{W^{1,\infty}(\mathbb{R}^n)}}+\frac{\|u\|_{W^{1,\infty}(\mathbb{R}^n)}^{\frac32}}2\right)\,\sqrt{r}
$$
for all~$r\in\left[0,\|u\|_{W^{1,\infty}(\mathbb{R}^n)}\right]$,
and the proof of~\eqref{NONP} is thereby complete.

Now, we claim that
\begin{equation}\label{a3.7}
\mathcal{N}\mathrm{~is~open.}
\end{equation}
For this, let $y_0\in \mathcal{N}$.
We recall that~$F\ge0$ on the range of~$u$, thanks to~\eqref{POTE}.
Then, in light of~\eqref{a2.3},
\eqref{GAMMAR}
and~\eqref{NONP} we see that
\begin{equation}\label{yHShsISUSsddfU}\begin{split}&
P_0=P(w;y_0)=
2\Phi'(|\nabla w(y_0)|^2)|\nabla w(y_0)|^2-\Phi(|\nabla w(y_0)|^2)-2F(w(y_0))
\\
&\qquad=\Gamma(|\nabla w(y_0)|^2)-2F(w(y_0))\le\Gamma(|\nabla w(y_0)|^2)\le C|\nabla w(y_0)|.
\end{split}\end{equation}
Now, we set
$$ \kappa:=\frac{P_0}{2C},$$
and,
recalling~\eqref{a3.3}, we observe that~$\kappa>0$.
As a consequence,
in light of~\eqref{yHShsISUSsddfU}, it follows that
there exists $\varrho>0$ such that
\begin{equation*}
|\nabla w(x)|\geq\kappa,\qquad{\mbox{ for any $x\in B_{\varrho}(y_0)$}}.\end{equation*}
Therefore, there exists~$\bar k\in\mathbb{N}$ such that
for all~$k\ge\bar k$ and all~$x\in B_{\varrho}(y_0)$ we have that
\begin{equation}\label{gradb} |\nabla u(x+z_k)|=|\nabla w_k(x)|\geq\frac\kappa2.\end{equation}
In particular, $\nabla u(x+z_k)\ne0$ and therefore, by~\eqref{a3.0}, we get
\begin{equation}\label{a3.0BOSX}
\sum_{i,j}\frac{\partial}{\partial x_{j}}\bigg(d_{ij}(\nabla u(x+z_k))
\frac{\partial P(u;x+z_k)}{\partial x_{i}}\bigg)+\frac{B(u;x+z_k)\cdot\nabla P(u;x+z_k)}{|\nabla u(x+z_k)|^2}\geq0
.\end{equation}
Moreover, by~\eqref{a2.3}, we have~$P(u;x+z_k)=P(w_k;x)$,
and therefore we can write~\eqref{a3.0BOSX}
in the form
\begin{equation}\label{a3.0BOS}
\begin{split}0\,&\le\,
\sum_{i,j}\frac{\partial}{\partial x_{j}}\bigg(d_{ij}(\nabla w_k(x))
\frac{\partial P(w_k;x)}{\partial x_{i}}\bigg)+\frac{B(u;x+z_k)\cdot\nabla P(w_k;x)}{|\nabla u(x+z_k)|^2}
\\&=\,
\sum_{i,j}\frac{\partial}{\partial x_{j}}\bigg(d_{ij}(\nabla w_k(x))
\frac{\partial P(w_k;x)}{\partial x_{i}}\bigg)+
\beta_k(x)\cdot\nabla P(w_k;x),\end{split}\end{equation}
for all~$x\in B_{\varrho}(y_0)$,
as long as~$k\ge\bar{k}$,
where
\begin{equation} \label{betak}\beta_k(x):=\frac{B(u;x+z_k)}{|\nabla u(x+z_k)|^2}.\end{equation}
We stress that, by~\eqref{Bi}, \eqref{gradb} and~\eqref{betak},
we can obtain
\begin{eqnarray*}&&\sup_{k\ge\bar{k}}
\|\beta_k\|_{L^\infty(B_{\varrho}(y_0),\mathbb{R}^n)}\\&\le&\frac4{\kappa^2} \Bigg[
\frac{2\|f\|_{L^\infty(u({\mathbb{R}}^n))}}{\displaystyle
\inf_{ \kappa/2\le|\zeta|\le \|u\|_{W^{1,\infty}(\mathbb{R}^n)}}
\Lambda(|\zeta|^2)}\bigg(1+
\sup_{\kappa/2\le |\zeta|\le \|u\|_{W^{1,\infty}(\mathbb{R}^n)}}
\frac{|\zeta|^2\;\Phi''(|\zeta|^2)}{\Phi'(|\zeta|^2)}\bigg)
\,\|u\|_{W^{1,\infty}(\mathbb{R}^n)}\\&&\qquad
+
\frac{\|u\|_{W^{1,\infty}(\mathbb{R}^n)}^2}{
\displaystyle
\inf_{\kappa/2\le|\zeta|\le \|u\|_{W^{1,\infty}(\mathbb{R}^n)}}
\Lambda(|\zeta|^2)}
\sup_{{j\in\{1,\dots,n\}}\atop{{(\zeta, \eta)\in \mathbb{R}^n\times\mathbb{R}^{N-n}}
\atop{|\zeta|\le \|u\|_{W^{1,\infty}(\mathbb{R}^n)}}} }\big|
g_{\zeta_j}(\zeta,\eta)
\big|\Bigg],
\end{eqnarray*}
which is bounded, thanks to~\eqref{SOMA}
(recall also~\eqref{jms} and~\eqref{3232}).
Therefore, up to subsequences,
we can suppose that
\begin{equation}\label{a3.0BO3}\begin{split}&
{\mbox{$\beta_k$ converges to some~$\beta\in L^\infty(B_{\varrho}(y_0),\mathbb{R}^n)$}}\\&
{\mbox{weakly in~$L^2(B_{\varrho}(y_0),\mathbb{R}^n)$
and weakly-$*$ in $L^\infty(B_{\varrho}(y_0),\mathbb{R}^n)$.}}\end{split}\end{equation}
Furthermore, by~\eqref{a1.4}, we conclude
\begin{eqnarray*}
0 &=& \mathrm{div}(\Phi'(|\nabla u(x+z_k)|^2)\nabla u(x+z_k))-
f(u(x+z_k))-g(\nabla u(x+z_k), Su(x+z_k))\\
&=& \mathrm{div}(\Phi'(|\nabla w_k(x)|^2)\nabla w_k(x))-
f(w_k(x))-g(\nabla w_k(x), Su(x+z_k))\\
&=&
\mathrm{div}(\Phi'(|\nabla w_k(x)|^2)\nabla w_k(x))-
\tilde f_k(x),
\end{eqnarray*}
where
$$\tilde f_k(x):=
f(w_k(x))+g(\nabla w_k(x), Su(x+z_k)).$$
In view of~\eqref{CONDalpha}
and~\eqref{REGO}, we have that~$\tilde f_k\in C^{0,\gamma}( B_{\varrho}(y_0))$,
with
$$ \sup_{k\ge\bar k}\|\tilde f_k\|_{ C^{0,\gamma}( B_{\varrho}(y_0))}<+\infty.$$
Consequently,
by~\eqref{gradb} and uniform elliptic regularity theory,
we obtain that
$$ \sup_{k\ge\bar k}\|w_k\|_{ C^{2,\gamma}( B_{\varrho}(y_0))}<+\infty.$$
Therefore, up to a subsequence and possibly
renaming~$\gamma$, we can suppose that~$w_k$ converges
to~$w$ in~$C^{2,\gamma}( B_{\varrho}(y_0))$, as $k\to+\infty$.

As a consequence, recalling~\eqref{a2.3},
we conclude that
\begin{equation}\label{a3.0BO2}
{\mbox{$\nabla P(w_k;\cdot)$ converges to
$\nabla P(w;\cdot)$ in~$C^{0,\gamma}( B_{\varrho}(y_0),\mathbb{R}^n)$, as $k\to+\infty$.}}
\end{equation}
Exploiting~\eqref{a3.0BOS}, \eqref{a3.0BO3} and~\eqref{a3.0BO2},
we obtain that
$$ 0\le
\sum_{i,j}\frac{\partial}{\partial x_{j}}\bigg(d_{ij}(\nabla w(x))
\frac{\partial P(w;x)}{\partial x_{i}}\bigg)+
\beta(x)\cdot\nabla P(w;x),$$
for all~$x\in B_{\varrho}(y_0)$, in the distributional sense.

Therefore, recalling~\eqref{PoPOAsdK},
by Maximum Principle (see e.g.~\cite[Theorem 8.19]{MR038022},
or~\cite{MR2356201}),
it follows that $P(w; x)=P_0$ for any $x\in B_{\varrho}(y_0)$, and
this establishes~\eqref{a3.7}.

Now, by~\eqref{a3.5}
and~\eqref{a3.7}, we infer that $\mathcal{N}$ is both closed and open, so that $\mathcal{N}=\mathbb{R}^n$, that is
\begin{equation}\label{a3.8}
P(w;x)=P_0\mathrm{~for~any~}x\in\mathbb{R}^n.
\end{equation}
On the other hand, since $w$ is bounded, by following the gradient lines we find a sequence of points $\tau_{j}$ such that
\begin{equation*}
\lim_{j\rightarrow+\infty}\nabla w(\tau_{j})=0.
\end{equation*}
By using this in~\eqref{a3.8}, we obtain
\begin{equation*}
0\geq\limsup_{j\rightarrow+\infty}-2F(w(\tau_j))=\limsup_{j\rightarrow+\infty}P(w;\tau_j)=P_0,
\end{equation*}
which is in contradiction with~\eqref{a3.3}.
This proves~\eqref{a3.2}, from which Theorem~\ref{T1.2} follows at once.
\end{proof}

\vskip4mm
\par\noindent
\section{Proofs of Proposition \ref{CASE} and Corollary~\ref{CASE2}}\label{7yh8328}
\setcounter{equation}{0}
\vskip2mm
\noindent
We start by proving Proposition \ref{CASE}:

\begin{proof}[Proof of Proposition \ref{CASE}]
{F}rom the definition of $\Phi(r)$ in~\eqref{PHI} and $\Lambda$ in~\eqref{a2.1} (recall also~\eqref{PRE}
and~\eqref{PRE7}) we have that
\begin{equation*}
\begin{split}&
\Lambda(r)=2r\Phi''(r)+\Phi'(r)=
\sum_{k=1}^{m}\Big[c_k(p_k-2)r(b_k+r)^{\frac{p_k-4}{2}}+c_k(b_k+r)^{\frac{p_k-2}{2}}\Big]
\\&\qquad\qquad=
\sum_{k=1}^{m}c_k(b_k+r)^{\frac{p_k-4}{2}}\big[(p_k-2)r+
b_k+r\big]=\sum_{k=1}^{m}c_k(b_k+r)^{\frac{p_k-4}{2}}\big[(p_k-1)r+
b_k\big].
\end{split}\end{equation*}
Therefore
\begin{equation}\label{AYHA}
\begin{split}
\beta\Phi'(r)-\Lambda(r)\,&=\beta\sum_{k=1}^m
c_k(b_k+r)^{\frac{p_k-2}{2}}-\sum_{k=1}^{m}c_k(b_k+r)^{\frac{p_k-4}{2}}\big[(p_k-1)r+
b_k\big]\\
&=\sum_{k=1}^{m}c_k(b_k+r)^{\frac{p_k-4}{2}}\big[\beta(b_k+r)
-(p_k-1)r-
b_k\big]\\
&=\sum_{k=1}^{m}c_k(b_k+r)^{\frac{p_k-4}{2}}\big[(\beta-1)b_k
+(\beta-p_k+1)r\big].
\end{split}
\end{equation}
Also, by~\eqref{Su}, we write~$g=g(\nabla u,u)$, referring to~$\zeta\in\R^n$ as the variable corresponding to~$\nabla u$
and to~$\eta\in\R$ as the variable corresponding to~$u$. By~\eqref{LAMONO},
we know that
\begin{equation}\label{INA1}
g_\eta\ge0.
\end{equation}
Moreover, by the homogeneity of~$g$ in~\eqref{LAMONO2}, we have that
\begin{equation}\label{INA2}
\nabla_\zeta g(\zeta,\eta)\cdot\zeta=\beta g(\zeta,\eta).
\end{equation}
As a consequence, by~\eqref{a1.13}, \eqref{INA1}
and~\eqref{INA2} (and using short notations whenever
possible), we have
\begin{eqnarray*}
\mathscr{R}&=&-\frac{2fg|\nabla u|^2}{\Phi'}+2|\nabla u|^4 g_{\eta}
+\frac{2f |\nabla u|^2}{\Lambda}\nabla_\zeta g\cdot\nabla u\\
&\ge&-\frac{2fg|\nabla u|^2}{\Phi'}+
\frac{2\beta fg |\nabla u|^2}{\Lambda}\\
&=&\frac{2fg|\nabla u|^2}{\Lambda\;\Phi'}\left(
\beta \Phi'-\Lambda\right).
\end{eqnarray*}
Hence, recalling~\eqref{AYHA}, we obtain
\begin{equation}\label{67780}
\mathscr{R}\ge
\frac{2fg|\nabla u|^2}{\Lambda\;\Phi'}
\sum_{k=1}^{m}c_k(b_k+|\nabla u|^2)^{\frac{p_k-4}{2}}\big[(\beta-1)b_k
+(\beta-p_k+1)|\nabla u|^2\big].\end{equation}
Now we claim that
\begin{equation}\label{6778}
\Xi:=fg\sum_{k=1}^{m}c_k(b_k+|\nabla u|^2)^{\frac{p_k-4}{2}}\big[(\beta-1)b_k
+(\beta-p_k+1)|\nabla u|^2\big]\ge0.
\end{equation}
To prove~\eqref{6778} we distinguish six cases, according
to the different assumptions in~\eqref{YAU1}-\eqref{YAUF}. To start with, let us assume that~\eqref{YAU1}
is satisfied. Then, we have that
$$ \Xi=
fg c_1(b_1+|\nabla u|^2)^{\frac{p_1-4}{2}}\big[(\beta-1)b_1
+(\beta-p_1+1)|\nabla u|^2\big]
=fg c_1(b_1+|\nabla u|^2)^{\frac{p_1-4}{2}}(p_1-2)b_1\ge0,$$
and this proves~\eqref{6778} in this case. The same way can be used to discuss cases~\eqref{YAU4} and~\eqref{YAU5}, we omit them here.

If instead~\eqref{YAU2} is satisfied, we find that~$(\beta-1)b_k
+(\beta-p_k+1)|\nabla u|^2\ge0$ and consequently~$\Xi\ge0$,
which shows~\eqref{6778} in this case.

In addition, if~\eqref{YAU3} is satisfied, we see that
$$ \beta-p_k+1\le \beta-p_1+1\le0,$$
and thus
$$ \Xi=fg\sum_{k=1}^{m}c_k |\nabla u|^{{p_k-4}}
(\beta-p_k+1)|\nabla u|^2 \ge0.$$

Finally, if~\eqref{YAUF} holds true, we see that
$$ \Xi=fg c_1(b_1+|\nabla u|^2)^{\frac{p_1-4}{2}}\big[(\beta-1)b_1
+(\beta-p_1+1)|\nabla u|^2\big]
=0
.$$
This completes the proof of~\eqref{6778}.

Then, the desired result follows from~\eqref{67780} and~\eqref{6778}.
\end{proof}

With the previous work, we can now establish Corollary~\ref{CASE2},
which gives a series of concrete situations in which our main gradient
estimate holds true.

\begin{proof}[Proof of Corollary~\ref{CASE2}]
By either~\eqref{EITH1X} or~\eqref{EITH2X}
we have the validity of either~\eqref{EITH1} or~\eqref{EITH2}
and consequently, by Proposition~\ref{STRUCT},
we deduce that either Assumption~A or Assumption~B is satisfied.

This is one of the cornerstones to apply Theorem~\ref{T1.2}.
The other fundamental ingredient to apply Theorem~\ref{T1.2}
lies in the reminder estimate~\eqref{COND}, which we are now going to check.
To this end, we want to exploit Proposition~\ref{CASE}
and, for this, we need to verify that its assumptions are fulfilled in
our setting. Indeed, we have that~\eqref{LAMONO} and~\eqref{LAMONO2}
follow from~\eqref{LAMONOX} and~\eqref{LAMONO2X}. Furthermore,
at least one among~\eqref{YAU1}-\eqref{YAUF}
is satisfied, in light of~\eqref{YAU1X}-\eqref{YAUFX}.
Condition~\ref{Su} is also fulfilled, due to the structure of~$g$ in~\eqref{DIVS}.
Therefore, all the hypotheses of Proposition~\ref{CASE} are satisfied,
and consequently we deduce from Proposition~\ref{CASE} that~$\mathscr{R}\ge0$.

This in turn gives that condition~\eqref{COND} is satisfied and,
as a consequence, we are in the position of exploiting Theorem~\ref{T1.2}.
In this way, the desired result in~\eqref{GRADX}
plainly follows from~\eqref{GRAD}.
\end{proof}
\vskip4mm
\par\noindent
\section{Proof of Theorem~\ref{T1.3}}\label{74d96543}
\setcounter{equation}{0}
\vskip2mm
\noindent
In this section, we prove Theorem~\ref{T1.3}.
After our preliminary work,
this part follows closely some arguments in~\cites{MR1296785,MR3049726}.
We provide full details in the specific
case in which we are interested, for the facility of the reader.

\begin{proof}[Proof of Theorem~\ref{T1.3}]
We take $x_0$ and~$r_0$
as in the statement of Theorem~\ref{T1.3}
and we define
\begin{equation*}
\mathcal{V}:=\{x\in\mathbb{R}^n~\mathrm{s.t}.~u(x)=r_0\}.
\end{equation*}
Notice that~$x_0\in\mathcal{V}$, and hence $\mathcal{V}\neq\varnothing$.
Furthermore, by the continuity of~$u$, we have that~$\mathcal{V}$ is closed. We claim that
\begin{equation}\label{a3.9}
\mathcal{V}\mathrm{~is~also~open}.
\end{equation}
{F}rom this, we would obtain that $\mathcal{V}=\mathbb{R}^n$, which is the thesis of Theorem~\ref{T1.3}. Therefore we focus on the proof of~\eqref{a3.9}.
For this, we fix $\hat{y}\in\mathcal{V}$ and $\hat{w}\in S^{n-1}$.
For any $t\in\mathbb{R}$, we define
\begin{equation*}
\varphi(t):=u(\hat{y}+t\hat{w})-u(x_0).
\end{equation*}
We claim that there exist positive constants $c$ and $C$ such that
\begin{equation}\label{a3.10}
|\varphi'(t)|\leq C|\varphi(t)|,\qquad~\mathrm{for}~\mathrm{all}~t\in(-c,c).
\end{equation}
For this, we define
\begin{equation}\label{hatp}
\hat{p}:=\begin{cases}
p& {\mbox{ if Assumption~A holds with $p>2$,}}\\
2& {\mbox{ otherwise.}}\end{cases}
\end{equation}
We also make use of the function~$\Psi$
introduced in the proof of Lemma~\ref{L1.1},
which satisfies the functional identity
\begin{equation*}
\Psi^{-1}(r)=2r\Phi'(r)-\Phi(r),\qquad{\mbox{for all }}r\in[0,+\infty).
\end{equation*}
Let also
\begin{equation*}
G(r):=\Psi^{-1}(r)-\epsilon r^{\hat{p}/2}.
\end{equation*}
The parameter $\epsilon>0$ will be chosen conveniently small with respect to $M:=\|u\|_{W^{1,\infty}(\mathbb{R}^n)}$ and to
the structural constants given in either~\eqref{a1.7} or~\eqref{a1.9}. Observe that if $M=0$, then $u=0$ in $\mathbb{R}^n$ and so the result is true.

\par
Now we take $r\in(0, M^2]$, with $M>0$,  and $\sigma:=(\sqrt{r},0,\ldots,0)\in\mathbb{R}^n$ and we use~\eqref{a2.1} and~\eqref{a1.5}, and either~\eqref{a1.7} or~\eqref{a1.9},
to see that
\begin{eqnarray}\nonumber
\Lambda(r)&=&2r\Phi''(r)+\Phi'(r)\\\nonumber
&=&|\sigma|^{-2}\sum_{i,j}a_{ij}(\sigma)\sigma_{i}\sigma_{j}\\\nonumber
&\geq&\left\{
   \begin{array}{ll}
C_{1}(a+|\sigma|)^{p-2}\qquad\mathrm{if~Assumption~A~holds~and}~p>2,\\
\displaystyle\frac{C_1}{(a+|\sigma|)^{2-p}}\qquad\mathrm{if~Assumption~A~holds~and}~p\in(1,2],\\\nonumber
\displaystyle\frac{C_1}{1+|\sigma|}\qquad\mathrm{if~Assumption~B~holds}
\end{array}
    \right.\\
&\geq&\left\{
   \begin{array}{ll}
C_{1}|\sigma|^{p-2}\qquad\mathrm{if~Assumption~A~holds~and}~p>2,\\\label{a3.11}
\displaystyle\frac{C_1}{(a+M)^{2-p}}\qquad\mathrm{if~Assumption~A~holds~and}~p\in(1,2],\\
\displaystyle\frac{C_1}{1+M}\qquad\mathrm{if~Assumption~B~holds}
\end{array}
    \right.\\
&\geq&\left\{
   \begin{array}{ll}
\displaystyle\frac{\epsilon p}{2}r^{p/2-1}\qquad\mathrm{if~Assumption~A~holds~and}~p>2,\\
\epsilon\qquad\mathrm{if~Assumption~A~holds~and}~p\in(1,2],\\\nonumber
\epsilon\qquad\mathrm{if~Assumption~B~holds}
\end{array}
    \right.\\
&=&\displaystyle\frac{\epsilon \hat{p}}{2}r^{\hat{p}/2-1},\nonumber
\end{eqnarray}
as long as $\epsilon$ is small enough.

\par
Furthermore, notice that, by $\Phi(0)=0$ and either~\eqref{083-384-1} or~\eqref{083-384-2}, we have that $G(0)=0$. Also, by~\eqref{a2.1}, we have
\begin{equation*}
G'(r):=\Lambda(r)-\frac{\epsilon\hat{p}}{2} r^{\hat{p}/2-1}
\end{equation*}
for any $r>0$ and therefore $G'(r)\geq0$ for any $r\in(0,M^2]$, thanks to~\eqref{a3.11} (as long as $\epsilon$ is small enough). As a consequence, $G(r)\geq0$ and therefore
\begin{equation}\label{a3.120}
\Psi^{-1}(r)\geq \epsilon r^{\hat{p}/2}
\end{equation}
for any $r\in(0,M^2]$. By taking $r:=|\nabla u(\hat{y}+t\hat{w})|^2$ in~\eqref{a3.120} and using~\eqref{GRAD},
we obtain
\begin{equation}\label{a3.12}
\aligned
|\varphi'(t)|^{\hat{p}}&\leq|\nabla u(\hat{y}+t\hat{w})|^{\hat{p}}\\
&\leq\frac{1}{\epsilon}\Psi^{-1}(|\nabla u(\hat{y}+t\hat{w})|^2)\\
&\leq\frac{2}{\epsilon}F(u(\hat{y}+t\hat{w}))\\
&=\frac{2}{\epsilon}\big[F(u(\hat{y}+t\hat{w}))-F(u(x_0))\big].
\endaligned
\end{equation}
Now, we claim that if $r$ is sufficiently close to $r_0$ then there exists $C_0>0$ such that
\begin{equation}\label{a3.13}
\big|F(r)-F(r_0)\big|\leq C_0|r-r_0|^{\hat{p}}.
\end{equation}
To check this we distinguish two cases,
according to the value of~$\hat{p}$.
First of all, if~$\hat{p}=2$,
we use a second order Taylor
expansion of $F$, and we conclude that
$$ \big|F(r)-F(r_0)\big|
\leq \sup_{\rho\in[r_0-1,r_0+1]}|F''(\rho)|\,
|r-r_0|^{2},$$
from which~\eqref{a3.13} plainly follows
in this case.

If, on the other hand, $\hat{p}\ne2$,
then the setting in~\eqref{hatp} gives
that Assumption~A holds true with~$p=\hat{p}>2$.
Then, in this case~\eqref{a3.13}
follows from~\eqref{Z1.16}.
The proof of~\eqref{a3.13} is therefore complete.

Now, plugging~\eqref{a3.13} into~\eqref{a3.12}, we get that there exists $c>0$ small enough such that
\begin{equation*}
|\varphi'(t)|^{\hat{p}}\leq\frac{2C_0}{\epsilon}|u(\hat{y}+t\hat{w})-u(x_0)|^{\hat{p}}=\frac{2C_0}{\epsilon}|\varphi(t)|^{\hat{p}},\quad t\in(-c,c).
\end{equation*}
Taking $C=\left(\frac{2C_0}{\epsilon}\right)^{1/\hat{p}}$, we obtain~\eqref{a3.10}, as desired.

{F}rom~\eqref{a3.10} we obtain that the function $t\mapsto|\varphi(t)|^{2}e^{-2Ct}$ is non-increasing for small $t$. Accordingly, $|\varphi(t)|\leq|\varphi(0)|e^{Ct}=0$ for small $t$, that is $\varphi(t)$ vanishes identically (for small $t$, independently of $\hat{w}$). By varying $\hat{w}$, we
obtain that $u$ is constant in a small neighborhood of $\hat{y}$. This proves~\eqref{a3.9} and thus Theorem~\ref{T1.3}.
\end{proof}
\vskip4mm
\par\noindent

\section*{Acknowledgments}
\vskip2mm
\noindent
Cecilia Cavaterra has been partially supported by GNAMPA (Gruppo Nazionale per
l'Analisi Matematica, la Probabilit� e le loro Applicazioni) of INdAM
(Istituto Nazionale di Alta Matematica).

Serena Dipierro has been supported by the DECRA Project DE180100957
``PDEs, free boundaries and applications''.

Serena Dipierro and Enrico Valdinoci have been supported
by the Australian Research Council Discovery Project
DP170104880
``N.E.W.
Nonlocal Equations at Work''.

Zu Gao has been supported by the Chinese Scholarship Council.
This work was written on the occasion of a very pleasant and fruitful visit
of Zu Gao at the Universit\`a di Milano, which we thank for the warm
hospitality.

\begin{bibdiv}
\begin{biblist}

\bib{MR1814973}{article}{
   author={Acerbi, Emilio},
   author={Mingione, Giuseppe},
   title={Regularity results for a class of functionals with non-standard
   growth},
   journal={Arch. Ration. Mech. Anal.},
   volume={156},
   date={2001},
   number={2},
   pages={121--140},
   issn={0003-9527},
   review={\MR{1814973}},
   doi={10.1007/s002050100117},
}

\bib{MR3381494}{article}{
   author={Andrews, Ben},
   title={Moduli of continuity, isoperimetric profiles, and multi-point
   estimates in geometric heat equations},
   conference={
      title={Surveys in differential geometry 2014. Regularity and evolution
      of nonlinear equations},
   },
   book={
      series={Surv. Differ. Geom.},
      volume={19},
      publisher={Int. Press, Somerville, MA},
   },
   date={2015},
   pages={1--47},
   review={\MR{3381494}},
   doi={10.4310/SDG.2014.v19.n1.a1},
}

\bib{MR3125548}{article}{
   author={Andrews, Ben},
   author={Clutterbuck, Julie},
   title={Sharp modulus of continuity for parabolic equations on manifolds
   and lower bounds for the first eigenvalue},
   journal={Anal. PDE},
   volume={6},
   date={2013},
   number={5},
   pages={1013--1024},
   issn={2157-5045},
   review={\MR{3125548}},
   doi={10.2140/apde.2013.6.1013},
}

\bib{2018arXiv180809615A}{article}{
       author = {Andrews, Ben},
       author ={Xiong, Changwei},
        title = {Gradient estimates via two-point functions for elliptic equations on manifolds},
      journal = {arXiv e-prints},
       date = {2018},
          eid = {arXiv:1808.09615},
        pages = {arXiv:1808.09615},
       eprint = {1808.09615},
       adsurl = {https://ui.adsabs.harvard.edu/\#abs/2018arXiv180809615A},
}

\bib{MR3158523}{article}{
   author={Banerjee, Agnid},
   author={Garofalo, Nicola},
   title={Gradient bounds and monotonicity of the energy for some nonlinear
   singular diffusion equations},
   journal={Indiana Univ. Math. J.},
   volume={62},
   date={2013},
   number={2},
   pages={699--736},
   issn={0022-2518},
   review={\MR{3158523}},
   doi={10.1512/iumj.2013.62.4969},
}

\bib{MR3348935}{article}{
   author={Banerjee, Agnid},
   author={Garofalo, Nicola},
   title={Modica type gradient estimates for an inhomogeneous variant of the
   normalized $p$-Laplacian evolution},
   journal={Nonlinear Anal.},
   volume={121},
   date={2015},
   pages={458--468},
   issn={0362-546X},
   review={\MR{3348935}},
   doi={10.1016/j.na.2015.02.003},
}

\bib{MR3587074}{article}{
   author={Banerjee, Agnid},
   author={Garofalo, Nicola},
   title={Modica type gradient estimates for reaction-diffusion equations},
   conference={
      title={Geometric methods in PDE's},
   },
   book={
      series={Springer INdAM Ser.},
      volume={13},
      publisher={Springer, Cham},
   },
   date={2015},
   pages={215--242},
   review={\MR{3587074}},
}
	
\bib{MR3775180}{article}{
   author={Baroni, Paolo},
   author={Colombo, Maria},
   author={Mingione, Giuseppe},
   title={Regularity for general functionals with double phase},
   journal={Calc. Var. Partial Differential Equations},
   volume={57},
   date={2018},
   number={2},
   pages={Art. 62, 48},
   issn={0944-2669},
   review={\MR{3775180}},
   doi={10.1007/s00526-018-1332-z},
}

\bib{MR1544873}{article}{
   author={Bernstein, Serge},
   title={\"{U}ber ein geometrisches Theorem und seine Anwendung auf die
   partiellen Differentialgleichungen vom elliptischen Typus},
   language={German},
   journal={Math. Z.},
   volume={26},
   date={1927},
   number={1},
   pages={551--558},
   issn={0025-5874},
   review={\MR{1544873}},
   doi={10.1007/BF01475472},
}

\bib{MR1296785}{article}{
   author={Caffarelli, Luis},
   author={Garofalo, Nicola},
   author={Seg\`ala, Fausto},
   title={A gradient bound for entire solutions of quasi-linear equations
   and its consequences},
   journal={Comm. Pure Appl. Math.},
   volume={47},
   date={1994},
   number={11},
   pages={1457--1473},
   issn={0010-3640},
   review={\MR{1296785}},
   doi={10.1002/cpa.3160471103},
}

\bib{MR2911121}{article}{
   author={Castellaneta, Diego},
   author={Farina, Alberto},
   author={Valdinoci, Enrico},
   title={A pointwise gradient estimate for solutions of singular and
   degenerate PDE's in possibly unbounded domains with nonnegative mean
   curvature},
   journal={Commun. Pure Appl. Anal.},
   volume={11},
   date={2012},
   number={5},
   pages={1983--2003},
   issn={1534-0392},
   review={\MR{2911121}},
   doi={10.3934/cpaa.2012.11.1983},
}

\bib{MR0385749}{article}{
   author={Cheng, S. Y.},
   author={Yau, S. T.},
   title={Differential equations on Riemannian manifolds and their geometric
   applications},
   journal={Comm. Pure Appl. Math.},
   volume={28},
   date={1975},
   number={3},
   pages={333--354},
   issn={0010-3640},
   review={\MR{0385749}},
   doi={10.1002/cpa.3160280303},
}

\bib{MR3231999}{article}{
   author={Cozzi, Matteo},
   author={Farina, Alberto},
   author={Valdinoci, Enrico},
   title={Gradient bounds and rigidity results for singular, degenerate,
   anisotropic partial differential equations},
   journal={Comm. Math. Phys.},
   volume={331},
   date={2014},
   number={1},
   pages={189--214},
   issn={0010-3616},
   review={\MR{3231999}},
   doi={10.1007/s00220-014-2107-9},
}

\bib{MR3614673}{article}{
   author={Cupini, Giovanni},
   author={Marcellini, Paolo},
   author={Mascolo, Elvira},
   title={Regularity of minimizers under limit growth conditions},
   journal={Nonlinear Anal.},
   volume={153},
   date={2017},
   pages={294--310},
   issn={0362-546X},
   review={\MR{3614673}},
   doi={10.1016/j.na.2016.06.002},
}

\bib{MR1942128}{article}{
   author={Danielli, Donatella},
   author={Garofalo, Nicola},
   title={Properties of entire solutions of non-uniformly elliptic equations
   arising in geometry and in phase transitions},
   journal={Calc. Var. Partial Differential Equations},
   volume={15},
   date={2002},
   number={4},
   pages={451--491},
   issn={0944-2669},
   review={\MR{1942128}},
   doi={10.1007/s005260100133},
}

\bib{MR2591980}{article}{
   author={Farina, Alberto},
   author={Valdinoci, Enrico},
   title={Flattening results for elliptic PDEs in unbounded domains with
   applications to overdetermined problems},
   journal={Arch. Ration. Mech. Anal.},
   volume={195},
   date={2010},
   number={3},
   pages={1025--1058},
   issn={0003-9527},
   review={\MR{2591980}},
   doi={10.1007/s00205-009-0227-8},
}

\bib{MR2680184}{article}{
   author={Farina, Alberto},
   author={Valdinoci, Enrico},
   title={A pointwise gradient estimate in possibly unbounded domains with
   nonnegative mean curvature},
   journal={Adv. Math.},
   volume={225},
   date={2010},
   number={5},
   pages={2808--2827},
   issn={0001-8708},
   review={\MR{2680184}},
   doi={10.1016/j.aim.2010.05.008},
}

\bib{MR2812957}{article}{
   author={Farina, Alberto},
   author={Valdinoci, Enrico},
   title={A pointwise gradient bound for elliptic equations on compact
   manifolds with nonnegative Ricci curvature},
   journal={Discrete Contin. Dyn. Syst.},
   volume={30},
   date={2011},
   number={4},
   pages={1139--1144},
   issn={1078-0947},
   review={\MR{2812957}},
   doi={10.3934/dcds.2011.30.1139},
}

\bib{MR3049726}{article}{
   author={Farina, Alberto},
   author={Valdinoci, Enrico},
   title={Pointwise estimates and rigidity results for entire solutions of
   nonlinear elliptic PDE's},
   journal={ESAIM Control Optim. Calc. Var.},
   volume={19},
   date={2013},
   number={2},
   pages={616--627},
   issn={1292-8119},
   review={\MR{3049726}},
   doi={10.1051/cocv/2012024},
}

\bib{MR3145008}{article}{
   author={Farina, Alberto},
   author={Valdinoci, Enrico},
   title={On partially and globally overdetermined problems of elliptic
   type},
   journal={Amer. J. Math.},
   volume={135},
   date={2013},
   number={6},
   pages={1699--1726},
   issn={0002-9327},
   review={\MR{3145008}},
   doi={10.1353/ajm.2013.0052},
}

\bib{MR3168616}{article}{
   author={Farina, Alberto},
   author={Valdinoci, Enrico},
   title={Gradient bounds for anisotropic partial differential equations},
   journal={Calc. Var. Partial Differential Equations},
   volume={49},
   date={2014},
   number={3-4},
   pages={923--936},
   issn={0944-2669},
   review={\MR{3168616}},
   doi={10.1007/s00526-013-0605-9},
}

\bib{MR2545524}{article}{
   author={Garofalo, Nicola},
   title={Gradient bounds for the horizontal $p$-Laplacian on a Carnot group
   and some applications},
   journal={Manuscripta Math.},
   volume={130},
   date={2009},
   number={3},
   pages={375--385},
   issn={0025-2611},
   review={\MR{2545524}},
   doi={10.1007/s00229-009-0294-z},
}

\bib{MR980297}{article}{
   author={Garofalo, Nicola},
   author={Lewis, John L.},
   title={A symmetry result related to some overdetermined boundary value
   problems},
   journal={Amer. J. Math.},
   volume={111},
   date={1989},
   number={1},
   pages={9--33},
   issn={0002-9327},
   review={\MR{980297}},
   doi={10.2307/2374477},
}

\bib{MR1674355}{article}{
   author={Garofalo, Nicola},
   author={Sartori, Elena},
   title={Symmetry in exterior boundary value problems for quasilinear
   elliptic equations via blow-up and a priori estimates},
   journal={Adv. Differential Equations},
   volume={4},
   date={1999},
   number={2},
   pages={137--161},
   issn={1079-9389},
   review={\MR{1674355}},
}

\bib{MR038022}{book}{
   author={Gilbarg, D.},
   author={Trudinger, N.S.},
   title={Elliptic Partial Differential Equations of Second Order, 2nd ed},
   series={Comprehensive Studies in Mathematics},
   volume={224},
   publisher={Springer-Verlag, Berlin},
   date={1983},
   pages={ix+224},
   isbn={3-540-08007-4},
   review={\MR{0737190}},
}

\bib{MR1230276}{article}{
   author={Hamilton, Richard S.},
   title={A matrix Harnack estimate for the heat equation},
   journal={Comm. Anal. Geom.},
   volume={1},
   date={1993},
   number={1},
   pages={113--126},
   issn={1019-8385},
   review={\MR{1230276}},
   doi={10.4310/CAG.1993.v1.n1.a6},
}

\bib{MR803255}{article}{
   author={Modica, Luciano},
   title={A gradient bound and a Liouville theorem for nonlinear Poisson
   equations},
   journal={Comm. Pure Appl. Math.},
   volume={38},
   date={1985},
   number={5},
   pages={679--684},
   issn={0010-3640},
   review={\MR{803255}},
   doi={10.1002/cpa.3160380515},
}

\bib{MR0454338}{article}{
   author={Payne, L. E.},
   title={Some remarks on maximum principles},
   journal={J. Analyse Math.},
   volume={30},
   date={1976},
   pages={421--433},
   issn={0021-7670},
   review={\MR{0454338}},
   doi={10.1007/BF02786729},
}

\bib{MR583337}{article}{
   author={Payne, L. E.},
   author={Philippin, G. A.},
   title={On maximum principles for a class of nonlinear second-order
   elliptic equations},
   journal={J. Differential Equations},
   volume={37},
   date={1980},
   number={1},
   pages={39--48},
   issn={0022-0396},
   review={\MR{583337}},
   doi={10.1016/0022-0396(80)90086-8},
}

\bib{MR2317549}{article}{
   author={Pol\'{a}\v{c}ik, Peter},
   author={Quittner, Pavol},
   author={Souplet, Philippe},
   title={Singularity and decay estimates in superlinear problems via
   Liouville-type theorems. II. Parabolic equations},
   journal={Indiana Univ. Math. J.},
   volume={56},
   date={2007},
   number={2},
   pages={879--908},
   issn={0022-2518},
   review={\MR{2317549}},
   doi={10.1512/iumj.2007.56.2911},
}

\bib{MR2356201}{book}{
   author={Pucci, Patrizia},
   author={Serrin, James},
   title={The maximum principle},
   series={Progress in Nonlinear Differential Equations and their
   Applications},
   volume={73},
   publisher={Birkh\"{a}user Verlag, Basel},
   date={2007},
   pages={x+235},
   isbn={978-3-7643-8144-8},
   review={\MR{2356201}},
}

\bib{MR2285258}{article}{
   author={Souplet, Philippe},
   author={Zhang, Qi S.},
   title={Sharp gradient estimate and Yau's Liouville theorem for the heat
   equation on noncompact manifolds},
   journal={Bull. London Math. Soc.},
   volume={38},
   date={2006},
   number={6},
   pages={1045--1053},
   issn={0024-6093},
   review={\MR{2285258}},
   doi={10.1112/S0024609306018947},
}

\bib{MR615561}{book}{
   author={Sperb, Ren\'{e} P.},
   title={Maximum principles and their applications},
   series={Mathematics in Science and Engineering},
   volume={157},
   publisher={Academic Press, Inc. [Harcourt Brace Jovanovich, Publishers],
   New York-London},
   date={1981},
   pages={ix+224},
   isbn={0-12-656880-4},
   review={\MR{615561}},
}
\end{biblist}
\end{bibdiv}

\end{document}